\documentclass[]{article}

\usepackage{amsmath,amsthm,amssymb,bbm,float}
\usepackage[margin=1.15in]{geometry}
\usepackage[title]{appendix}
\usepackage{pdflscape}

\theoremstyle{theorem}
\newtheorem{theorem}{Theorem}[subsection]
\newtheorem{proposition}[theorem]{Proposition}
\newtheorem{lemma}[theorem]{Lemma}
\newtheorem{corollary}[theorem]{Corollary}

\newtheorem{question}[theorem]{Question}
\theoremstyle{definition}
\newtheorem{note}[theorem]{Note}
\newtheorem{example}[theorem]{Example}

\title{Modular tensor categories, subcategories, and Galois orbits}
\date{\today}
\author{Julia Plavnik, Andrew Schopieray, \\ Zhiqiang Yu, and Qing Zhang}

\begin{document}

\maketitle

\begin{abstract}
We establish a set of general results to study how the Galois action on modular tensor categories interacts with fusion subcategories.  This includes a characterization of fusion subcategories of modular tensor categories which are closed under the Galois action, and a classification of modular tensor categories which factor as a product of pointed and transitive categories in terms of pseudoinvertible objects.  As an application, we classify modular tensor categories with two Galois orbits of simple objects and a nontrivial grading group.  
\end{abstract}


\section{Introduction}

\par Modular tensor categories are an algebraic shadow of conformal field theory, and instantiate an intersection of representation theory, low-dimensional topology, number theory, and mathematical physics.  From their definition it seems strange that a nontrivial example of such a heavily-defined object would even exist, yet there are incredibly diverse infinite families coming from the representation theory of finite groups, quantum groups at roots of unity, and categorical generalizations thereof.  There are two open-ended and interrelated approaches to understanding modular tensor categories: to produce novel examples, and to organize known examples.  Often, novel examples inspire the classification of examples with particular characteristics while classification results demonstrate unusual gaps in the library of known examples.  Here we contribute results organizing modular tensor categories by the interaction between fusion subcategories and the Galois action on simple objects.

\par The Galois action on a modular tensor category associates a permutation of the finite set of simple objects to every Galois automorphism in the absolute Galois group $\mathrm{Gal}(\overline{\mathbb{Q}}/\mathbb{Q})$.  As the modular data of a modular tensor category consists of cyclotomic integers, it is sufficient to study the Galois action of $\mathrm{Gal}(\mathbb{Q}(\zeta)/\mathbb{Q})$ where $\zeta$ is a primitive root of unity whose order is the conductor, or Frobenius-Schur exponent of the modular tensor category in question.  Previously \cite[Theorem II]{2020arXiv200701366N}, it was shown there is a single infinite family of modular tensor categories with a transitive Galois action, which is to say that there is a unique orbit of simple objects under this action.  One reason transitive modular tensor categories have a particularly elegant description is that they all possess an essentially unique factorization into a product of simple transitive modular tensor categories, in the sense that each factor has no proper nontrivial fusion subcategories.  But in general it is necessary to have a more nuanced understanding of the interaction between fusion subcategories and the Galois action.

\par In \cite{2019arXiv191212260G}, number-theoretical properties of Frobenius-Perron dimension were used to identify and classify fusion subcategories of fusion categories.  Similarly, here we consider centralizing simple objects in a modular tensor category as a numerical constraint, and apply analogous methods.  In Theorem \ref{prelem0}, we prove that a fusion subcategory of a modular tensor category is closed under the Galois action if and only if its centralizer subcategory is integral, which means all simple objects have integer Frobenius-Perron dimension.  This implies, for example, that all fusion subcategories of integral modular tensor categories (e.g., the twisted doubles of finite groups) are preserved under the Galois action.   A common source of simple objects of integer Frobenius-Perron dimension are the invertible objects, i.e.\ those with Frobenius-Perron dimension 1.  In Theorem \ref{what}, we obtain a lower bound for $|\mathrm{Orb}(\mathcal{C})|$, the number of Galois orbits of a modular tensor category $\mathcal{C}$, based on the prime factorization of the dimension of its maximal pointed fusion subcategory $\mathcal{C}_\mathrm{pt}$.  A weaker condition than invertibility is requiring a simple object to have categorical dimension $\pm1$, which we call pseudoinvertibility.  We prove in Theorem \ref{what2} that a modular tensor category $\mathcal{C}$ factorizes as $\mathcal{P}\boxtimes\mathcal{T}$ where $\mathcal{P}$ is a pointed modular tensor category and $\mathcal{T}$ is a transitive modular tensor category if and only if every Galois orbit of simple objects contains a pseudoinvertible object.  In terms of formal codegrees \cite{MR2576705}, these conditions are equivalent to all formal codegrees of a modular tensor category being Galois conjugate.

\par Lastly, we utilize our general results to further the classification of modular tensor categories with a small number of Galois orbits of simple objects initiated in \cite{2020arXiv200701366N}.  When $|\mathrm{Orb}(\mathcal{C})|=1$, i.e.\ $\mathcal{C}$ is transitive, then $\mathcal{C}_\mathrm{pt}$ is trivial, or equivalently $\mathcal{C}$ has a trivial universal grading group \cite[Section 4.14]{tcat}.   Transitive modular tensor categories are also self-dual and Galois conjugate to pseudounitary categories.  Modular tensor categories $\mathcal{C}$ with $|\mathrm{Orb}(\mathcal{C})|=2$ are more complex in all of these regards, and so their classification is naturally partitioned into first studying those which have nontrivial gradings, then those with nontrivial fusion subcategories.  Our general results imply that

\begin{itemize}
\item[-] $|\mathrm{Orb}(\mathcal{C})|=2$ and $\mathcal{C}_\mathrm{pt}$ is nontrivial if and only if $\mathcal{C}\simeq\mathcal{D}\boxtimes\mathcal{T}$ where $\mathcal{T}$ is a transitive modular tensor category, and $\mathcal{D}$ is pointed of prime dimension coprime to the conductor of $\mathcal{T}$, or an Ising category (Corollary \ref{pointedprop});
\item[-] $|\mathrm{Orb}(\mathcal{C})|=2$ and $\mathcal{C}_\mathrm{pt}$ is trivial if and only if $\mathcal{C}\simeq\mathcal{D}\boxtimes\mathcal{T}$ where $\mathcal{T}$ is a transitive modular tensor category, and $\mathcal{D}$ is a simple modular tensor category whose conductor is coprime to that of $\mathcal{T}$, or $\mathrm{rank}(\mathcal{T})>2$ and $\mathcal{D}$ is braided equivalent to $\mathcal{F}_1\boxtimes\mathcal{F}_2$ where $\mathcal{F}_1,\mathcal{F}_2$ are any of the Fibonacci modular tensor categories (Proposition \ref{unpointedprop}).
\end{itemize}

Therefore, a complete classification is reduced to understanding those examples which are simple, which will require future analysis using other methods.  We provide two infinite families of simple modular tensor categories $\mathcal{C}$ with $|\mathrm{Orb}(\mathcal{C})|=2$ (Figure \ref{fig:two}) which are not pointed; one family is self-dual while the other is not self-dual.  It is unclear whether 5 additional sporadic examples of modular tensor categories of this type belong to infinite families or if they are truly exceptional.  Note that 3 of these examples are not pseudounitary, and for at least one, there does not exist a pseudounitary modular tensor category nor a pseudounitary fusion category with these fusion rules \cite{schopieray2020nonpseudounitary}.

\par A brief introduction to modular tensor categories, their Galois action, and corresponding modular group representation are given in Section \ref{sec:prelim}; we provide an expansive collection of examples in Section \ref{sec:examples}.  Figure \ref{fig:notation} below contains notation which recurs throughout.  Our general results pertaining to the Galois action are contained in Section \ref{sec:gen}, while the particular application to modular tensor categories with 2 Galois orbits is the subject of Section \ref{sec:two}.  Appendix \ref{tables}, consisting of the $\mathfrak{t}$-spectra of irreducible representations of $\mathrm{SL}(2,\mathbb{Z}/N\mathbb{Z})$ for $N\in\mathbb{Z}_{\geq2}$, is necessary for the proofs of Section \ref{sec:two}.

\begin{figure}[H]
\centering
\begin{equation*}
\begin{array}{|c|c|c|}
\hline \mathrm{Notation} & \mathrm{Parameters} & \mathrm{Meaning} \\\hline\hline
\phi(n) & n\in\mathbb{Z}_{\geq1} & \text{number of positive integers }m\text{ with }\mathrm{gcd}(m,n)=1 \\\hline
\zeta_n & n\in\mathbb{Z}_{\geq1} & \exp(2\pi i/n) \\\hline
\mathbb{Q}(\zeta_n)^+ & n\in\mathbb{Z}_{\geq1} & \text{maximal totally real subfield of }\mathbb{Q}(\zeta_n) \\\hline
[\mathbb{K}:\mathbb{L}] & \text{algebraic number fields }\mathbb{L}\subset\mathbb{K} & \text{degree of }\mathbb{K}\text{ as a vector space over }\mathbb{L}\\\hline
\mathcal{O}(\mathcal{C}) & \text{fusion category }\mathcal{C} & \text{set of isomorphism classes of simple objects of }\mathcal{C} \\\hline 
C_\mathcal{C}(\mathcal{D}) & \text{braided fusion categories }\mathcal{D}\subset\mathcal{C} & \text{relative centralizer of }\mathcal{D}\subset\mathcal{C} \\\hline
\mathcal{O}_X & \text{simple object }X\text{ in} & \text{Galois orbit of }X\text{ in }\mathcal{C} \\
 &\text{a modular tensor category }\mathcal{C} & \\\hline
\mathrm{Orb}(\mathcal{C}) &\text{ modular tensor category }\mathcal{C} & \text{the set of Galois orbits of simple objects of }\mathcal{C} \\\hline
\end{array}
\end{equation*}
    \caption{Recurring notation}%
    \label{fig:notation}%
\end{figure}


\section{Preliminaries}\label{sec:prelim}

Our main objects of study are modular tensor categories.  We only briefly recount the components of a modular tensor category with an aim to reach our main tool: the modular data of a modular tensor category and its Galois action.  For comprehensive detail one can refer to the standard text \cite[Sections 8.13-8.17]{tcat} and references within. The structure of a modular tensor category is built upon a fusion category: a $\mathbb{C}$-linear, semisimple, rigid monoidal category (the monoidal operation will be denoted $\otimes$) with finitely-many isomorphism classes of simple objects and a simple $\otimes$-unit which will be denoted $\mathbbm{1}$.  We will denote the set of isomorphism classes of simple objects of a fusion category $\mathcal{C}$ by $\mathcal{O}(\mathcal{C})$.  Rigidity of a fusion category $\mathcal{C}$ is the existence of a suitable notion of duality which we will denote $X^\ast$ for an object $X\in\mathcal{C}$.  Upon the base fusion category, a modular tensor category includes two additional structures: a nondegenerate braiding (see \cite[Section 8.1]{tcat} and Section \ref{prestruct} below), or a family of natural isomorphisms dictating the commutativity of $\otimes$, and a spherical structure \cite[Section 4.7]{tcat}, or a family of natural isomorphisms $X\to X^{\ast\ast}$ for all $X\in\mathcal{C}$ which allow a well-defined notion of trace for endomorphisms in the category.


\subsection{The modular data}

Let $\mathcal{C}$ be a modular tensor category.  There are two important symmetric invertible $|\mathcal{O}(\mathcal{C})|\times|\mathcal{O}(\mathcal{C})|$ matrices associated to $\mathcal{C}$, denoted $s$ and $t$ and often referred to as the modular data of $\mathcal{C}$.  The matrix $s$ consists of the traces (inherited from the chosen spherical structure) of the double braidings on pairs of simple objects.  For example, the double braiding of $\mathbbm{1}$ and any other $X\in\mathcal{O}(\mathcal{C})$ is the identity, and so its trace $s_{\mathbbm{1},X}=s_{X,\mathbbm{1}}$ is aptly referred to as $\dim(X)$, the categorical dimension of $X$.  The matrix $t$ is a diagonal matrix whose diagonal entries are roots of unity $t_X$ known as the twists or full twists of simple objects $X$; the order of the $t$-matrix is known as the conductor of $\mathcal{C}$.  If $\mathcal{C}$ and $\mathcal{D}$ are modular tensor categories whose conductors are coprime as integers, we will simply refer to $\mathcal{C}$ and $\mathcal{D}$ as coprime for brevity.

\par The namesake of the modular data is its relation to the modular group $\mathrm{SL}(2,\mathbb{Z})$.  Recall that the modular group $\mathrm{SL}(2,\mathbb{Z})$, the group of $2\times2$ integer matrices with determinant 1, is generated by
\begin{equation}
\mathfrak{s}:=\left[\begin{array}{cc}0 & -1 \\ 1 & 0\end{array}\right]\qquad\text{and}\qquad\mathfrak{t}:=\left[\begin{array}{cc}1 & 1 \\ 0 & 1\end{array}\right].
\end{equation}
If $\mathcal{C}$ is a modular tensor category, then $\mathfrak{s}\mapsto s$, $\mathfrak{t}\mapsto t$ defines a projective representation of $\mathrm{SL}(2,\mathbb{Z})$ \cite[Section 8.16]{tcat}.  Depending on the application, one often requires a normalized version of the modular data, giving a linear representation of $\mathrm{SL}(2,\mathbb{Z})$.  This normalization can be made very explicit.  Define
\begin{equation}
\dim(\mathcal{C}):=\sum_{X\in\mathcal{O}(\mathcal{C})}s_{\mathbbm{1},X}^2\qquad\text{and}\qquad\tau:=\sum_{X\in\mathcal{O}(\mathcal{C})}t_Xs_{\mathbbm{1},X}^2.
\end{equation}
Let $\dim(\mathcal{C})^{1/2}$ be the positive square root, and let $\gamma$ be any third root of $\xi(\mathcal{C}):=\tau\cdot\dim(\mathcal{C})^{-1/2}$.  The root of unity $\xi(\mathcal{C})$ is usually called the multiplicative central charge of $\mathcal{C}$.  Then the normalized modular data $\tilde{s}:=\dim(\mathcal{C})^{-1/2}s$ and $\tilde{t}:=\gamma^{-1}t$ give a linear representation of $\mathrm{SL}(2,\mathbb{Z})$ with $\mathfrak{s}\mapsto\tilde{s}$ and $\mathfrak{t}\mapsto\tilde{t}$.  We will elaborate on this representation in Section \ref{prerep}.


\subsection{The Galois action}\label{secgal} 

If $\mathcal{C}$ is a modular tensor category, the characters of the Grothendieck ring $K(\mathcal{C})$ are in bijection with $\mathcal{O}(\mathcal{C})$ \cite[Section 8.14]{tcat}.  Specifically, for each $X\in\mathcal{O}(\mathcal{C})$, $Y\mapsto s_{Y,X}/s_{\mathbbm{1},X}$ is a ring homomorphism $K(\mathcal{C})\to\mathbb{C}$ and every character of $K(\mathcal{C})\to\mathbb{C}$ arises in this way.  This bijection gives a well-defined Galois action on $\mathcal{O}(\mathcal{C})$ coming from the fact that the absolute Galois group $\mathrm{Gal}(\overline{\mathbb{Q}}/\mathbb{Q})$ permutes the characters of $K(\mathcal{C})$.  In particular, for any Galois automorphism $\sigma\in\mathrm{Gal}(\overline{\mathbb{Q}}/\mathbb{Q})$, there exists a unique permutation $\hat{\sigma}:\mathcal{O}(\mathcal{C})\to\mathcal{O}(\mathcal{C})$ defined by
\begin{equation}\label{gal}
\sigma\left(\dfrac{s_{X,Y}}{s_{\mathbbm{1},Y}}\right)=\dfrac{s_{X,\hat{\sigma}(Y)}}{s_{\mathbbm{1},\hat{\sigma}(Y)}}
\end{equation}
for all $X,Y\in\mathcal{O}(\mathcal{C})$.  One then deduces, for example, that for all $X\in\mathcal{O}(\mathcal{C})$ and $\sigma\in\mathrm{Gal}(\overline{\mathbb{Q}}/\mathbb{Q})$,
\begin{equation}\label{dims}
\dim(\hat{\sigma}(X))^2=s_{\mathbbm{1},\hat{\sigma}(X)}^2=\dfrac{\dim(\mathcal{C})}{\sigma(\dim(\mathcal{C}))}\sigma(s_{\mathbbm{1},X}^2)=\dfrac{\dim(\mathcal{C})}{\sigma(\dim(\mathcal{C}))}\sigma(\dim(X))^2.
\end{equation}
The Galois action on the matrix $t$ is by squares.  We have $\sigma^2(t_X)=\sigma^2(\gamma)\gamma^{-1}\cdot t_{\hat{\sigma}(X)}$ \cite[Theorem II(iii)]{dong2015congruence} which on the normalized modular data simplifies to $\sigma^2(\tilde{t}_X)=\tilde{t}_{\hat{\sigma}(X)}$.  For this reason we will often refer to roots of unity $\zeta_1,\zeta_2$ as being square Galois conjugate, i.e.\ there exists $\sigma\in\mathrm{Gal}(\overline{\mathbb{Q}}/\mathbb{Q})$ such that $\sigma^2(\zeta_1)=\zeta_2$.  Denote the Galois orbit of $X\in\mathcal{O}(\mathcal{C})$ by $\mathcal{O}_X$ and the set of Galois orbits of $\mathcal{O}(\mathcal{C})$ by $\mathrm{Orb}(\mathcal{C})$.  If $\mathcal{O}(\mathcal{C})=\mathcal{O}_\mathbbm{1}$, i.e. $|\mathrm{Orb}(\mathcal{C})|=1$, we say $\mathcal{C}$ is transitive.  Transitive modular tensor categories are completely classified \cite[Theorem II]{2020arXiv200701366N}.
\par It is well-known that the entries of the $s$-matrix are cyclotomic integers \cite[Theorem 8.14.7]{tcat} so the Galois action on a modular tensor category is determined by the permutations arising from the Galois group of a cyclotomic field.  One can always let this field be $\mathbb{Q}(\zeta_N)$ where $\zeta_N=\exp(2\pi i/N)$ and $N$ is the order or conductor of the $t$-matrix, as $s_{X,Y}\in\mathbb{Q}(\zeta_N)$ for all $X,Y\in\mathcal{O}(\mathcal{C})$ \cite[Section 5]{MR2725181}.  In particular the character values of the Grothendieck ring of $\mathcal{C}$ lie in $\mathbb{Q}(\zeta_N)$; the degree of the extensions generated by these so-called Verlinde eigenvalues determine the size of the Galois orbits.   To this end, define the field of Verlinde eigenvalues for each $Y\in\mathcal{O}(\mathcal{C})$ by $\mathbb{L}_Y:=\mathbb{Q}(s_{X,Y}/s_{\mathbbm{1},Y}:X\in\mathcal{O}(\mathcal{C}))$ so that the following result follows almost by the definition of the Galois action. 
\begin{lemma}{\textnormal{\cite[Lemma 3.2]{MR3632091}}}\label{lem1}
Let $\mathcal{C}$ be a modular tensor category.  For all $X\in\mathcal{O}(\mathcal{C})$, $|\mathcal{O}_X|=[\mathbb{L}_X:\mathbb{Q}]$.
\end{lemma}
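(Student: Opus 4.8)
The plan is to identify the Galois orbit $\mathcal{O}_X$ with an orbit of the finite abelian group $G:=\mathrm{Gal}(\mathbb{Q}(\zeta_N)/\mathbb{Q})$ and then invoke the orbit--stabilizer theorem together with the fundamental theorem of Galois theory. First I would record the ambient field-theoretic setup: since every entry $s_{Y,X}$ lies in $\mathbb{Q}(\zeta_N)$ by \cite[Section 5]{MR2725181}, each Verlinde eigenvalue $s_{Y,X}/s_{\mathbbm{1},X}$ lies in $\mathbb{Q}(\zeta_N)$, so $\mathbb{L}_X\subseteq\mathbb{Q}(\zeta_N)$, a finite abelian (hence Galois) extension of $\mathbb{Q}$. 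Consequently the permutation $\hat{\sigma}$ defined by \eqref{gal} depends only on the restriction $\sigma|_{\mathbb{Q}(\zeta_N)}$, because it is completely determined by the action of $\sigma$ on the numbers $s_{Y,X}/s_{\mathbbm{1},X}\in\mathbb{Q}(\zeta_N)$; thus the Galois action on $\mathcal{O}(\mathcal{C})$ factors through $G$.

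Next I would translate \eqref{gal} through the bijection $X\mapsto\chi_X$ between $\mathcal{O}(\mathcal{C})$ and the characters of $K(\mathcal{C})$, where $\chi_X(Y):=s_{Y,X}/s_{\mathbbm{1},X}$. Equation \eqref{gal} states exactly that $\sigma\circ\chi_Y=\chi_{\hat{\sigma}(Y)}$ as functions on $\mathcal{O}(\mathcal{C})$, i.e.\ the assignment $Y\mapsto\chi_Y$ is equivariant for the $G$-action $\hat{\sigma}$ on $\mathcal{O}(\mathcal{C})$ and the $G$-action on characters by post-composition. Hence $|\mathcal{O}_X|$ equals the cardinality of the $G$-orbit of the character $\chi_X$.

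Then I would compute the stabilizer of $\chi_X$. An automorphism $\sigma\in G$ satisfies $\sigma\circ\chi_X=\chi_X$ if and only if $\sigma$ fixes every value $\chi_X(Y)=s_{Y,X}/s_{\mathbbm{1},X}$, which holds if and only if $\sigma$ restricts to the identity on $\mathbb{L}_X=\mathbb{Q}(\chi_X(Y):Y\in\mathcal{O}(\mathcal{C}))$; that is, $\mathrm{Stab}_G(\chi_X)=\mathrm{Gal}(\mathbb{Q}(\zeta_N)/\mathbb{L}_X)$. The orbit--stabilizer theorem and the fundamental theorem of Galois theory then yield
\[
|\mathcal{O}_X|=[G:\mathrm{Gal}(\mathbb{Q}(\zeta_N)/\mathbb{L}_X)]=[\mathbb{L}_X:\mathbb{Q}],
\]
which is the claim.

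The argument is essentially formal, so I do not expect a genuine obstacle; the only points demanding attention are the containment $\mathbb{L}_X\subseteq\mathbb{Q}(\zeta_N)$ and the fact that the permutations $\hat{\sigma}$ descend to the finite abelian group $G$, both of which follow from the cyclotomic integrality of the $s$-matrix. Everything else is a direct reading of \eqref{gal} through the character bijection.
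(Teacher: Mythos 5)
Your proof is correct and is exactly the argument the paper has in mind: the lemma is quoted from \cite[Lemma 3.2]{MR3632091} with the remark that it ``follows almost by the definition of the Galois action,'' and that intended argument is precisely your orbit--stabilizer computation after observing that the action factors through $\mathrm{Gal}(\mathbb{Q}(\zeta_N)/\mathbb{Q})$ and that the stabilizer of $\chi_X$ is $\mathrm{Gal}(\mathbb{Q}(\zeta_N)/\mathbb{L}_X)$. No gaps; the one point worth making explicit is that $\sigma\circ\chi_X=\chi_X$ forces $\hat{\sigma}(X)=X$ because the assignment $Y\mapsto\chi_Y$ is a bijection onto the characters of $K(\mathcal{C})$, which you implicitly use.
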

One should not confuse the Galois action on modular tensor categories with Galois conjugacy of fusion, braided fusion, and modular tensor categories.  If $\mathcal{C}$ is a fusion, braided fusion, or modular tensor category, and $\sigma\in\mathrm{Gal}(\overline{\mathbb{Q}}/\mathbb{Q})$ is any Galois automorphism, then we denote by $\mathcal{C}^\sigma$ the category constructed by applying $\sigma$ to all structural constraints of $\mathcal{C}$.    The fusion categories $\mathcal{C}$ and $\mathcal{C}^\sigma$ have identical fusion rules.


\subsection{Structure of modular tensor categories}\label{prestruct}

Perhaps the most important character of the Grothendieck ring of a fusion category is the Frobenius-Perron dimension $X\mapsto\mathrm{FPdim}(X)$ where $\mathrm{FPdim}(X)$ is defined as the maximal real eigenvalue of the fusion matrix $(N_{X,Y}^Z)_{Y,Z\in\mathcal{O}(\mathcal{C})}$ with $N_{X,Y}^Z:=\dim_\mathbb{C}\mathrm{Hom}(X\otimes Y,Z)$.  We denote by $\mathcal{C}_\mathrm{pt}$, the fusion subcategory of $\mathcal{C}$ generated by $X\in\mathcal{O}(\mathcal{C})$ with $\mathrm{FPdim}(X)=1$ and we say such $X$ are invertible.   If $\mathcal{C}$ is a modular tensor category, or more generally a spherical fusion category, and $\dim(X)=\pm1$ for some $X\in\mathcal{O}(\mathcal{C})$, we refer to $X$ as pseudoinvertible because these notions are equivalent for pseudounitary fusion categories \cite[Corollary 9.6.6]{tcat}, i.e.\ fusion categories such that $\mathrm{FPdim}(\mathcal{C})=\dim(\mathcal{C})$.  The universal grading group of a modular tensor category is isomorphic to $\mathcal{O}(\mathcal{C}_\mathrm{pt})$ \cite[Theorem 6.3]{nilgelaki} and the trivial component of this grading is $\mathcal{C}_\mathrm{ad}$, the fusion subcategory of $\mathcal{C}$ $\otimes$-generated by $X\otimes X^\ast$ over all $X\in\mathcal{O}(\mathcal{C})$.

\par If $\mathcal{C}$ is a braided fusion category and $\mathcal{D}\subset\mathcal{C}$ is a fusion subcategory, we denote the relative centralizer of $\mathcal{D}$ in $\mathcal{C}$ by $C_\mathcal{C}(\mathcal{D})$.  The relative centralizer of $\mathcal{D}\subset\mathcal{C}$ is the fusion subcategory consisting of $X\in\mathcal{O}(\mathcal{C})$ which centralize all simple objects of $\mathcal{D}$, i.e.\ the double braidings with all other $Y\in\mathcal{O}(\mathcal{D})$ are trivial.  If $\mathcal{C}$ is a modular tensor category, $X,Y\in\mathcal{O}(\mathcal{C})$ centralize one another if and only if $s_{X,Y}=\dim(X)\dim(Y)$ \cite[Proposition 8.20.5(i)]{tcat}, a more important characterization in this exposition.  We reserve the notation $\mathcal{C}':=C_{\mathcal{C}}(\mathcal{C})$ for the symmetric center of $\mathcal{C}$, but the reader may find this notation used for generic centralizers in other sources.  Nondegenerate braidings, a necessary condition for modular tensor categories, are those braided fusion categories such that $\mathcal{O}(\mathcal{C}')=\{\mathbbm{1}\}$ while symmetrically braided fusion categories are those for which $\mathcal{C}'=\mathcal{C}$.  Another useful characterization is that $\mathcal{C}_\mathrm{ad}=C_\mathcal{C}(\mathcal{C}_\mathrm{pt})$ for all modular tensor categories $\mathcal{C}$ \cite[Corollary 6.9]{nilgelaki}.  It is well-known that if $\mathcal{C}$ is a nondegenerately braided fusion category and $\mathcal{D}\subset\mathcal{C}$ is a nondegenerately braided fusion subcategory, then $\mathcal{C}\simeq\mathcal{D}\boxtimes C_\mathcal{C}(\mathcal{D})$ \cite[Proposition 2.2]{DMNO} where $\boxtimes$ is the Deligne tensor product.  As a consequence, each nondegenerately braided fusion category has a factorization into prime factors (i.e.\ containing no nondegenerately braided fusion subcategories) which is unique up to a permutation of factors when $\mathcal{C}_\mathrm{pt}$ is trivial \cite[Proposition 2.2]{DMNO}.


\subsection{Representation theory of $\mathrm{SL}(2,\mathbb{Z}/N\mathbb{Z})$}\label{prerep}

Let $\mathcal{C}$ be a modular tensor category, $\tilde{\rho}_\mathcal{C}$ be its associated $\mathrm{SL}(2,\mathbb{Z})$ representation defined by the normalized modular data $\tilde{s},\tilde{t}$.  It was proven in \cite[Theorem 6.8]{MR2725181} that $\tilde{\rho}_\mathcal{C}$ can be factored through $\mathrm{SL}(2,\mathbb{Z}/N\mathbb{Z})$ where again $N\in\mathbb{Z}_{\geq1}$ is the order of $\tilde{t}$.  In other words, to each modular tensor category $\mathcal{C}$ one associates a finite-dimensional representation $\rho_\mathcal{C}$ of the finite group $\mathrm{SL}(2,\mathbb{Z}/N\mathbb{Z})$.  The benefit being that the irreducible finite-dimensional complex representations of $\mathrm{SL}(2,\mathbb{Z}/N\mathbb{Z})$ have been described explicitly, up to isomorphism \cite{MR444787,MR444788}.  The irreducible representations of $\mathrm{SL}(2,\mathbb{Z}/N\mathbb{Z})$ can be described from the irreducible representations of $\mathrm{SL}(2,\mathbb{Z}/p^\lambda\mathbb{Z})$ for $p^{\lambda}$ dividing $N$ where $p\in\mathbb{Z}_{\geq2}$ is prime and $\lambda\in\mathbb{Z}_{\geq1}$ (refer to \cite[Section 3]{MR1354262}), which are listed in tables of Appendix \ref{tables}.   As the $\tilde{t}$-matrix of a modular tensor category $\mathcal{C}$ is a diagonal matrix with finite order, it is useful to organize the irreducible representations of $\mathrm{SL}(2,\mathbb{Z}/p^\lambda\mathbb{Z})$ by the eigenvalues of $\rho_\mathcal{C}(\mathfrak{t})=\tilde{t}$ which we will refer to as the $\mathfrak{t}$-spectra of $\rho_\mathcal{C}$. 


\section{Examples}\label{sec:examples}

Here we describe a robust collection of examples of modular tensor categories and their sets of Galois orbits to elucidate our general results in Section \ref{sec:gen} and provide modular data for sporadic examples in Section \ref{sec:two}.  One major source of examples of modular tensor categories comes from the semisimple representation theory of quantum groups at roots of unity; these are indexed by a complex finite-dimensional simple Lie algebra $\mathfrak{g}$ and positive integer $k$, and denoted $\mathcal{C}(\mathfrak{g},k)$.  The details of this construction are but ancillary to our general theory, so we refer the reader to \cite{MR4079742} for further reading.

\begin{example}[Pointed categories]\label{point}
Let $A$ be a finite abelian group of order $n\in\mathbb{Z}_{\geq1}$ and consider the pointed modular tensor category $\mathcal{C}(A,q)$ where $q:A\to\mathbb{C}^\times$ is any nondegenerate quadratic form \cite[Section 8.4]{tcat}.  We will identify $A$ with $\mathcal{O}(\mathcal{C}(A,q))$ and write the group operation (tensor product) additively.  If $g\in A$ is arbitrary and $h\in A$ has order $m\mid n$, then by \cite[Lemma 2.4]{mug1}, $s_{g,h}^m=s_{g,mh}=s_{g,\mathbbm{1}}=1$.  Hence $s_{g,h}$ is an $m^{\tiny\text{th}}$ root of unity.  The Galois orbit of $h$ is thus determined by $\mathrm{Gal}(\mathbb{Q}(\zeta_m)/\mathbb{Q})\cong(\mathbb{Z}/m\mathbb{Z})^\times$, the multiplicative group of integers modulo $m$.  Let $\sigma\in\mathrm{Gal}(\mathbb{Q}(\zeta_m)/\mathbb{Q})$ be such that $\sigma(\zeta_m)=\zeta_m^k$ for some $k\in\mathbb{Z}$ coprime to $m$.  Then for all $g\in A$,
\begin{equation}
s_{g,\hat{\sigma}(h)}=s_{g,\hat{\sigma}(h)}/s_{\mathbbm{1},\hat{\sigma}(h)}=\sigma(s_{g,h}/s_{\mathbbm{1},h})=\sigma(s_{g,h})=s_{g,h}^k=s_{g,kh}.
\end{equation}
Therefore $\hat{\sigma}(h)=kh$ as the $s$-matrix is invertible, and $\mathcal{O}_h=\{kh:\gcd(m,k)=1\}$.  This is the set of generators of the cyclic subgroup generated by $h$.  In particular $|\mathcal{O}_h|=\phi(\mathrm{ord}(h))$.  Moreover, there is a one-to-one correspondence between $\mathrm{Orb}(\mathcal{C}(A,q))$ and the number of distinct cyclic subgroups of $A$.  While methods for computing this number have been known for almost a century \cite{MR1561347}, we prefer a more modern formula due to T\'oth \cite{MR2884710,MR2963406}.  In particular, with invariant factor decomposition $A\cong\bigoplus_{j=1}^k\mathbb{Z}/n_j\mathbb{Z}$ where $n_1,\ldots,n_k\in\mathbb{Z}_{\geq1}$ (i.e. \!$\prod_{j=1}^kn_j=|A|$ and $n_1\mid n_2\mid\cdots\mid n_k$), then \cite[Theorem 1]{MR2963406} states that
\begin{equation}\label{eq:point}
|\mathrm{Orb}(\mathcal{C}(A,q))|=\sum_{d_1\mid n_1,\ldots,d_k\mid n_k}\dfrac{\phi(d_1)\cdots\phi(d_k)}{\phi(\mathrm{lcm}(d_1,\ldots,d_k))}.
\end{equation}
One important observation is that the Galois action on $\mathcal{C}(A,q)$ depends only on the group structure of the finite abelian group $A$, and not on the nondegenerate quadratic form $q$, hence even rough characteristics of modular tensor categories cannot be derived from the Galois action.  For example,  $\mathcal{C}(\mathfrak{sl}_2,1)^{\boxtimes2}$ and $\mathcal{C}(\mathfrak{so}_8,1)$ are both pointed with underlying abelian group $\mathbb{Z}/2\mathbb{Z}\oplus\mathbb{Z}/2\mathbb{Z}$.  The Galois action on both categories is trivial, but the latter category is prime while the other factors as a Deligne tensor product.  We illustrate the diversity of the number of Galois orbits of simple objects in pointed modular tensor categories of a fixed rank by example in Figure \ref{fig:point}.
\begin{figure}[H]
\centering
\begin{equation*}
\begin{array}{|c|c||c|c|}
\hline A & |\mathrm{Orb}(\mathcal{C}(A,q))| & A & |\mathrm{Orb}(\mathcal{C}(A,q))| \\\hline\hline
\mathbb{Z}/2\mathbb{Z}\oplus\mathbb{Z}/30\mathbb{Z}\oplus\mathbb{Z}/30\mathbb{Z} & 280 &\mathbb{Z}/10\mathbb{Z}\oplus\mathbb{Z}/180\mathbb{Z} & 126\\
\mathbb{Z}/2\mathbb{Z}\oplus\mathbb{Z}/6\mathbb{Z}\oplus\mathbb{Z}/150\mathbb{Z} & 120 &\mathbb{Z}/2\mathbb{Z}\oplus\mathbb{Z}/900\mathbb{Z} & 54\\
\mathbb{Z}/2\mathbb{Z}\oplus\mathbb{Z}/10\mathbb{Z}\oplus\mathbb{Z}/90\mathbb{Z} &168 &\mathbb{Z}/15\mathbb{Z}\oplus\mathbb{Z}/120\mathbb{Z} & 140\\
\mathbb{Z}/2\mathbb{Z}\oplus\mathbb{Z}/2\mathbb{Z}\oplus\mathbb{Z}/450\mathbb{Z} & 72 &\mathbb{Z}/3\mathbb{Z}\oplus\mathbb{Z}/600\mathbb{Z} & 60 \\
\mathbb{Z}/30\mathbb{Z}\oplus\mathbb{Z}/60\mathbb{Z} &210 &\mathbb{Z}/5\mathbb{Z}\oplus\mathbb{Z}/360\mathbb{Z} & 84\\
\mathbb{Z}/6\mathbb{Z}\oplus\mathbb{Z}/300\mathbb{Z} & 90&\mathbb{Z}/1800\mathbb{Z} & 36 \\\hline
\end{array}
\end{equation*}
    \caption{Number of Galois orbits of pointed modular tensor categories of rank 1800}%
    \label{fig:point}%
\end{figure}
\end{example}

\begin{note}[Cyclic groups]
Let $n\in\mathbb{Z}_{\geq1}$.  Equation (\ref{eq:point}) states
\begin{equation}
|\mathrm{Orb}(\mathcal{C}(\mathbb{Z}/n\mathbb{Z},q))|=\sum_{d\in\mathbb{Z}_{\geq1}, d\mid n}\dfrac{\phi(d)}{\phi(\mathrm{lcm}(d))}=\sum_{d\in\mathbb{Z}_{\geq1}, d\mid n}1,
\end{equation}
the number of positive integer divisors of $n$.
\end{note}

\begin{note}[Elementary abelian $p$-groups]
Let $p\in\mathbb{Z}_{\geq2}$ be prime, $n\in\mathbb{Z}_{\geq1}$, and $A=(\mathbb{Z}/p\mathbb{Z})^{\oplus n}$.  Equation (\ref{eq:point}) states
\begin{equation}
|\mathrm{Orb}(\mathcal{C}(A,q))|=\sum_{d_1\mid p,\ldots,d_n\mid p}\dfrac{\prod_{j=1}^n\phi(d_j)}{\phi(\mathrm{lcm}(d_1,\ldots,d_n))}=1+\sum_{k=1}^n\binom{n}{k}(p-1)^{k-1}=1+\dfrac{p^n-1}{p-1}.
\end{equation}
\end{note}


\begin{example}[Deligne products of transitive categories]\label{transdeligne}
If two transitive modular tensor categories are coprime in the sense that their conductors are coprime integers, then their Deligne product is transitive as well by \cite[Lemma 2.1(iii)]{2020arXiv200701366N}.  Now consider a product $\mathcal{T}_1\boxtimes\mathcal{T}_2$ of transitive modular tensor categories.  Each of $\mathcal{T}_1$ and $\mathcal{T}_2$ has a unique factorization up to reordering into coprime simple transitive factors \cite[Theorem II]{2020arXiv200701366N}.  So it is clear that by a organizing these simple factors by their associated prime conductor, one can compute $|\mathrm{Orb}(\mathcal{T}_1\boxtimes\mathcal{T}_2)|$ with only the knowledge of $|\mathrm{Orb}(\mathcal{T}^{\boxtimes2})|$ for an abitrary transitive modular tensor category $\mathcal{T}$.  To this end, recall that simple objects $X\in\mathcal{T}$ are classified by a Galois automorphism $\sigma\in\mathrm{Gal}(\mathbb{Q}(\dim(\mathcal{C}))/\mathbb{Q})$ such that $\hat{\sigma}(\mathbbm{1})=X$ \cite[Section 3]{2020arXiv200701366N}.  We will henceforth use the bijection $\mathcal{O}(\mathcal{T})=\{X_\sigma:\sigma\in\mathrm{Gal}(\mathbb{Q}(\dim(\mathcal{C}))/\mathbb{Q})\}$.  Note that the $\mathrm{rank}(\mathcal{T})$ Galois orbits of $\mathcal{T}^{\boxtimes2}$, $\{\mathcal{O}_{\mathbbm{1}\boxtimes X_\sigma}:\sigma\in\mathrm{Gal}(\mathbb{Q}(\dim(\mathcal{C}))/\mathbb{Q})\}$ are pairwise disjoint since the Galois action on $\mathcal{O}(\mathcal{T})$ is fixed-point free \cite[Proposition 3.2]{2020arXiv200701366N}.  And for the same reason, for each $\sigma$, $|\mathcal{O}_{\mathbbm{1}\boxtimes X_\sigma}|=\mathrm{rank}(\mathcal{T})$.  Hence this set contains all Galois orbits of $\mathcal{T}^{\boxtimes2}$ and in particular, $|\mathrm{Orb}(\mathcal{T}^{\boxtimes2})|=\mathrm{rank}(\mathcal{T})$.  This result also follows from \cite[Proposition 3.12]{2020arXiv200701366N}.
\end{example}

In the following examples the simple objects will be labeled/ordered as $\mathbbm{1}=X_0,X_1,X_2,\ldots$.

\begin{example}\label{fib}
Let $\mathrm{Fib}$ be any rank 2 modular tensor category with nontrivial simple object of dimension $u:=(1/2)(1+\sqrt{5})$ \cite{ostrik}.  Let $\sigma$ be the nontrivial element of $\mathrm{Gal}(\mathbb{Q}(\zeta_5)^+/\mathbb{Q})$.  There are two distinct Deligne products up to Galois conjugacy and equivalence, of the form $\mathrm{Fib}^{\tau_1}\boxtimes\mathrm{Fib}^{\tau_2}$ where $\tau_1,\tau_2$ are either $\mathrm{id}$ or $\sigma$.  Their $s$-matrices are
\begin{equation}
s = \left[\begin{array}{cc|cc}
1 & u^2 & u & u \\
u^2 & 1 & -u & -u \\\hline
u & -u & -1 & u^2 \\
u & -u & u^2 & -1
\end{array}\right]
\qquad\text{ and }\qquad
s = \left[\begin{array}{cc|cc}
1 & -1 & u & \sigma(u) \\
-1 & 1 & -\sigma(u) & -u \\\hline
u & -\sigma(u) & -1 & -1 \\
\sigma(u) & -u & -1 & -1
\end{array}\right].
\end{equation}
One can easily verify that $\{X_0,X_1\}$ and $\{X_2,X_3\}$ are the orbits of the Galois action.  But $\mathrm{Fib}\boxtimes\mathrm{Fib}$ is pseudounitary while $\mathrm{Fib}\boxtimes\mathrm{Fib}^\sigma$ is not pseudounitary, nor Galois conjugate to a pseudounitary category.
\end{example}

\begin{example}
Consider the rank 6 non-psuedounitary modular tensor category $\mathcal{C}(\mathfrak{so}_5,3/2)_\mathrm{ad}$ \cite[Section 5.2]{rowell}.  The entries of the $s$-matrix lie in the field $\mathbb{Q}(\zeta_9)^+$ where $\zeta_n:=\exp(2\pi i/n)$ for $n\in\mathbb{Z}_{\geq1}$, the maximal totally real subfield of the ninth roots of unity.  Define $u:=\zeta_9-\zeta_9^2-\zeta_9^5$ and $\sigma\in\mathrm{Gal}(\mathbb{Q}(\zeta_9)^+/\mathbb{Q})$ such that $\sigma(\zeta_9)=\zeta_9^2$.  Then

\begin{equation}
s=\left[\begin{array}{ccc|ccc}
1 & -1 & 1 & u & \sigma(u) & \sigma^2(u) \\
-1 & 1 & -1 & -\sigma(u) & -\sigma^2(u) & -u \\
1 & -1 & 1 & \sigma^2(u) & u & \sigma(u) \\\hline
u & -\sigma(u) & \sigma^2(u) & 1 & 1 & 1 \\
\sigma(u) &-\sigma^2(u) & u & 1 & 1 & 1 \\
\sigma^2(u) &-u & \sigma(u) & 1 & 1 & 1
\end{array}\right].
\end{equation}
One easily computes that $\{X_0,X_1,X_2\}$ and $\{X_3,X_4,X_5\}$ are the orbits of the Galois action.
\end{example}

\begin{example}
Consider the rank 5 unitary modular tensor category $\mathcal{C}(\mathfrak{sl}_2,12)_A^0$ \cite[Section 7]{KiO}.  The modular data lies in $\mathbb{Q}(\zeta_7)$.  We compute
\begin{equation}
s = \left[\begin{array}{ccc|cc}
1 & a & b & 1-a+b & 1-a+b \\
a & b & -1 & -1+a-b & -1+a-b \\
b & -1 & -a & 1-a+b & 1-a+b \\\hline
1-a+b & -1+a-b & 1-a+b & c & \overline{c} \\
1-a+b & -1+a-b & 1-a+b & \overline{c} & c
\end{array}\right],
\end{equation}
where $a:=-\zeta_7^4-\zeta_7^3+1$, $b:=-\zeta_7^5-2\zeta_7^4-2\zeta_7^3-\zeta_7^2$, and $c:=-\zeta_7^3-\zeta_7^2-2\zeta_7-1$.    This category is not self-dual, unlike the previous examples as $c\not\in\mathbb{R}$.  One easily computes that $\{X_0,X_1,X_2\}$ and $\{X_3,X_4\}$ are the orbits of the Galois action.
\end{example}


\section{Generalities}\label{sec:gen}


\subsection{Galois conjugacy, centralizers, and fusion subcategories}

Here we prove Theorem \ref{prelem0}, a classification of fusion subcategories of modular tensor categories invariant under the Galois action, which generalizes \cite[Lemma 3.3(ii)]{MR3632091} to arbitrary fusion subcategories.  This implies Theorem \ref{what}, a lower bound for the number of Galois orbits of a modular tensor category based on the number of invertible objects it possesses, i.e. the order of its universal grading group.  Recall that the conductor of a modular tensor category is the order of its $t$-matrix.

\begin{lemma}\label{prelemminus1}
Let $\mathcal{C}$ be a modular tensor category with conductor $N\in\mathbb{Z}_{\geq1}$, and  $X,Y\in\mathcal{O}(\mathcal{C})$.  Then $X$ and $Y$ centralize one another if and only if $X$ and $\hat{\sigma}(Y)$ centralize one another for all $\sigma\in\mathrm{Gal}(\mathbb{Q}(\zeta_N)/\mathbb{Q}(\dim(X)))$.
\end{lemma}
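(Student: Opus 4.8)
The argument will rest entirely on the numerical characterization of centralizing simple objects recalled above: $X$ and $Y$ centralize one another if and only if $s_{X,Y}=\dim(X)\dim(Y)$ \cite[Proposition 8.20.5(i)]{tcat}. Since $s_{\mathbbm{1},Y}=\dim(Y)\neq0$, this is equivalent to the single scalar equation $s_{X,Y}/s_{\mathbbm{1},Y}=\dim(X)$, i.e.\ to the statement that the value at $X$ of the character of $K(\mathcal{C})$ indexed by $Y$ equals $\dim(X)$. The plan is to show that this equation is simultaneously preserved and reflected by every Galois automorphism in $\mathrm{Gal}(\mathbb{Q}(\zeta_N)/\mathbb{Q}(\dim(X)))$, after which both implications of the lemma fall out immediately.

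First I would record the small well-posedness points. Because all entries of the $s$-matrix lie in $\mathbb{Q}(\zeta_N)$, in particular $\dim(X)=s_{\mathbbm{1},X}\in\mathbb{Q}(\zeta_N)$, so $\mathbb{Q}(\dim(X))\subseteq\mathbb{Q}(\zeta_N)$ and $\mathrm{Gal}(\mathbb{Q}(\zeta_N)/\mathbb{Q}(\dim(X)))$ is a genuine subgroup of $\mathrm{Gal}(\mathbb{Q}(\zeta_N)/\mathbb{Q})$; moreover the Galois action on $\mathcal{O}(\mathcal{C})$ factors through $\mathrm{Gal}(\mathbb{Q}(\zeta_N)/\mathbb{Q})$, so $\hat{\sigma}$ is defined for every $\sigma$ in this subgroup. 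Then, fixing such a $\sigma$, I apply $\sigma$ to the equation $s_{X,Y}/s_{\mathbbm{1},Y}=\dim(X)$: the left-hand side becomes $s_{X,\hat{\sigma}(Y)}/s_{\mathbbm{1},\hat{\sigma}(Y)}$ by the defining relation (\ref{gal}) of the Galois action, while the right-hand side is fixed since $\sigma\in\mathrm{Gal}(\mathbb{Q}(\zeta_N)/\mathbb{Q}(\dim(X)))$. As $\sigma$ is an invertible field automorphism, this shows that for each individual $\sigma$ the equation $s_{X,Y}/s_{\mathbbm{1},Y}=\dim(X)$ holds if and only if $s_{X,\hat{\sigma}(Y)}/s_{\mathbbm{1},\hat{\sigma}(Y)}=\dim(X)$, that is, $X$ and $Y$ centralize one another if and only if $X$ and $\hat{\sigma}(Y)$ do. Ranging over all $\sigma$ gives the ``only if'' direction of the lemma, and specializing to $\sigma=\mathrm{id}\in\mathrm{Gal}(\mathbb{Q}(\zeta_N)/\mathbb{Q}(\dim(X)))$ gives the ``if'' direction.

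There is no genuine obstacle here: the statement is essentially an unwinding of the definition of the Galois action combined with the $s$-matrix criterion for centralizing, and the only points requiring attention are the bookkeeping observations above (that $\dim(X)$ is a cyclotomic integer so the relative Galois group makes sense, that the action factors through the finite group $\mathrm{Gal}(\mathbb{Q}(\zeta_N)/\mathbb{Q})$, and that $\dim(Y)\neq0$ so the division is legitimate). This lemma is the local input that will be assembled, together with a counting argument over Galois orbits, into the proof of Theorem \ref{prelem0}.
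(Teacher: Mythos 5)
Your proposal is correct and follows essentially the same route as the paper: apply $\sigma$ to the identity $s_{X,Y}/s_{\mathbbm{1},Y}=\dim(X)$, use the defining relation (\ref{gal}) to rewrite the left side as $s_{X,\hat{\sigma}(Y)}/s_{\mathbbm{1},\hat{\sigma}(Y)}$, and use that $\sigma$ fixes $\dim(X)$; the converse is the specialization $\sigma=\mathrm{id}$, exactly as in the paper. The extra well-posedness remarks you include are harmless and the argument is complete.
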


\begin{proof}
The converse direction is trivial.  For the forward direction, the definition of the Galois action in Equation  (\ref{gal}) states that for all $\sigma\in\mathrm{Gal}(\mathbb{Q}(\zeta_N)/\mathbb{Q}(\dim(X)))\subset\mathrm{Gal}(\mathbb{Q}(\zeta_N)/\mathbb{Q})$, we have $s_{X,\hat{\sigma}(Y)}=\sigma(s_{X,Y}/s_{\mathbbm{1},Y})\dim(\hat{\sigma}(Y))$.  But $s_{X,Y}/s_{\mathbbm{1},Y}=\dim(X)$ as $X$ and $Y$ centralize one another, thus $s_{X,\hat{\sigma}(Y)}=\sigma(\dim(X))\dim(\hat{\sigma}(Y))=\dim(X)\dim(\hat{\sigma}(Y))$.
\end{proof}

\begin{lemma}\label{counting}
If $\mathcal{C}$ is a modular tensor category, then $\mathcal{O}_\mathbbm{1}\subset\mathcal{O}(\mathcal{C}_\mathrm{ad})$.  Moreover if $\mathcal{O}(\mathcal{C}_\mathrm{pt})\subset\mathcal{O}_\mathbbm{1}$, then $\mathcal{C}_\mathrm{pt}$ is symmetrically braided. 
\end{lemma}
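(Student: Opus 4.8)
The plan is to prove the two assertions in sequence, both by exploiting the characterization of centralizing pairs via $s$-matrix entries together with the Galois action formula (\ref{gal}) and the dimension formula (\ref{dims}).

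For the first claim, $\mathcal{O}_\mathbbm{1}\subset\mathcal{O}(\mathcal{C}_\mathrm{ad})$, I would argue as follows. Recall $\mathcal{C}_\mathrm{ad}=C_\mathcal{C}(\mathcal{C}_\mathrm{pt})$, so it suffices to show that every $X\in\mathcal{O}_\mathbbm{1}$ centralizes every invertible object $g\in\mathcal{O}(\mathcal{C}_\mathrm{pt})$. Fix such a $g$; since $g$ is invertible, $\dim(g)=\pm1$, and $\mathbbm{1}$ trivially centralizes $g$ (the double braiding with the unit is the identity), i.e.\ $s_{g,\mathbbm{1}}=\dim(g)=\dim(g)\dim(\mathbbm{1})$. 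Now any $X\in\mathcal{O}_\mathbbm{1}$ is of the form $\hat\sigma(\mathbbm{1})$ for some $\sigma\in\mathrm{Gal}(\mathbb{Q}(\zeta_N)/\mathbb{Q})$. Since $\dim(\mathbbm{1})=1\in\mathbb{Q}$, we have $\sigma\in\mathrm{Gal}(\mathbb{Q}(\zeta_N)/\mathbb{Q}(\dim(\mathbbm{1})))$, so Lemma \ref{prelemminus1} applied with the roles of the two objects in the centralizing pair $(g,\mathbbm{1})$ arranged so that $g$ plays the role of the fixed object and $\mathbbm{1}$ the one being moved — gives that $g$ and $\hat\sigma(\mathbbm{1})=X$ centralize one another. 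Hence $X\in C_\mathcal{C}(\mathcal{C}_\mathrm{pt})=\mathcal{C}_\mathrm{ad}$, proving $\mathcal{O}_\mathbbm{1}\subset\mathcal{O}(\mathcal{C}_\mathrm{ad})$. (One must double-check that Lemma \ref{prelemminus1} is being invoked with the hypothesis $\dim$ of the \emph{fixed} object rational; here the fixed object is $g$, whose dimension is $\pm1\in\mathbb{Q}$, so $\mathrm{Gal}(\mathbb{Q}(\zeta_N)/\mathbb{Q}(\dim g))=\mathrm{Gal}(\mathbb{Q}(\zeta_N)/\mathbb{Q})$ and the full Galois orbit is captured — this is the point to state carefully.)

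For the second claim, suppose in addition $\mathcal{O}(\mathcal{C}_\mathrm{pt})\subset\mathcal{O}_\mathbbm{1}$. Combined with the first part this gives $\mathcal{O}(\mathcal{C}_\mathrm{pt})\subset\mathcal{O}_\mathbbm{1}\subset\mathcal{O}(\mathcal{C}_\mathrm{ad})=\mathcal{O}(C_\mathcal{C}(\mathcal{C}_\mathrm{pt}))$. Thus every simple object of $\mathcal{C}_\mathrm{pt}$ centralizes every simple object of $\mathcal{C}_\mathrm{pt}$, which is precisely the statement that $\mathcal{C}_\mathrm{pt}\subset C_\mathcal{C}(\mathcal{C}_\mathrm{pt})$, i.e.\ $\mathcal{C}_\mathrm{pt}$ is contained in its own centralizer inside $\mathcal{C}$. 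Since centralizing is symmetric, this is equivalent to $(\mathcal{C}_\mathrm{pt})'=\mathcal{C}_\mathrm{pt}$ where the symmetric center is computed in $\mathcal{C}_\mathrm{pt}$ itself; that is, $\mathcal{C}_\mathrm{pt}$ is symmetrically braided.

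The main obstacle I anticipate is bookkeeping with Lemma \ref{prelemminus1}: that lemma is stated with a fixed object $X$ and moves $Y$ by $\sigma\in\mathrm{Gal}(\mathbb{Q}(\zeta_N)/\mathbb{Q}(\dim X))$, whereas in the application I want to move $\mathbbm{1}$ through its whole Galois orbit while keeping $g$ fixed, so I must be careful about which object's dimension field controls the relevant subgroup of the Galois group — and verify that a single $\sigma\in\mathrm{Gal}(\mathbb{Q}(\zeta_N)/\mathbb{Q})$ with $\hat\sigma(\mathbbm{1})=X$ indeed lies in the subgroup fixing $\dim(g)=\pm1$, which it does trivially. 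Beyond that, the argument is a direct chain of the stated characterizations, with no serious analytic or number-theoretic input.
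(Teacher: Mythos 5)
Your proof is correct and follows the same route as the paper: both parts reduce to Lemma \ref{prelemminus1} applied to the centralizing pair $(g,\mathbbm{1})$ with $g$ invertible, where $\dim(g)=\pm1\in\mathbb{Q}$ forces the relevant Galois subgroup to be all of $\mathrm{Gal}(\mathbb{Q}(\zeta_N)/\mathbb{Q})$, so the whole orbit $\mathcal{O}_\mathbbm{1}$ centralizes $\mathcal{C}_\mathrm{pt}=C_\mathcal{C}(\mathcal{C}_\mathrm{ad})'s$ dual subcategory and hence lies in $\mathcal{C}_\mathrm{ad}=C_\mathcal{C}(\mathcal{C}_\mathrm{pt})$, with the second claim following by the inclusion $\mathcal{O}(\mathcal{C}_\mathrm{pt})\subset\mathcal{O}_\mathbbm{1}$. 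The paper's proof is a two-line version of exactly this; your careful bookkeeping of which object's dimension field controls the Galois subgroup is the right point to check and resolves correctly.
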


\begin{proof}
The first claim follows from Lemma \ref{prelemminus1} because $\mathbbm{1}\in\mathcal{O}(\mathcal{C}_\mathrm{ad})$.  In particular $\mathcal{O}_\mathbbm{1}$ centralizes $\mathcal{C}_\mathrm{pt}$, hence $\mathcal{C}_\mathrm{pt}$ centralizes itself when $\mathcal{O}(\mathcal{C}_\mathrm{pt})\subset\mathcal{O}_\mathbbm{1}$.
\end{proof}

Recall the definition of the fields of Verlinde eigenvalues $\mathbb{L}_X$ from Section \ref{secgal}.

\begin{lemma}\label{counting2}
Let $\mathcal{C}$ be a modular tensor category and $\mathcal{D}\subset\mathcal{C}$ a fusion subcategory.  Define the field of dimensions $\mathbb{K}_\mathcal{D}:=\mathbb{Q}(\dim(Y):Y\in\mathcal{O}(C_\mathcal{C}(\mathcal{D})))$.  If $X\in\mathcal{O}(\mathcal{D})$, then
\begin{equation}
|\mathcal{O}_X\cap\mathcal{O}(\mathcal{D})|=[\mathbb{L}_X:\mathbb{K}_\mathcal{D}\cap\mathbb{L}_X]=\dfrac{|\mathcal{O}_X|}{[\mathbb{K}_\mathcal{D}\cap\mathbb{L}_X:\mathbb{Q}]}.
\end{equation}
\end{lemma}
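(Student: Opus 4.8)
The plan is to understand the Galois orbit $\mathcal{O}_X$ as a transitive $\mathrm{Gal}(\mathbb{Q}(\zeta_N)/\mathbb{Q})$-set, identify it with a coset space via Lemma \ref{lem1}, and then use Lemma \ref{prelemminus1} to recognize the stabilizer of the condition ``lies in $\mathcal{O}(\mathcal{D})$'' as a Galois subgroup. Concretely, by Lemma \ref{lem1} we have $|\mathcal{O}_X| = [\mathbb{L}_X : \mathbb{Q}]$, and the Galois action on $\mathcal{O}_X$ is equivalent to the action of $\mathrm{Gal}(\mathbb{Q}(\zeta_N)/\mathbb{Q})$ on the set of embeddings $\mathbb{L}_X \hookrightarrow \mathbb{Q}(\zeta_N)$ (or, after passing to the Galois closure, on $\mathrm{Gal}(\mathbb{Q}(\zeta_N)/\mathbb{Q})/\mathrm{Gal}(\mathbb{Q}(\zeta_N)/\mathbb{L}_X)$), with $\hat{\sigma}(X)$ corresponding to the coset of $\sigma$. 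The key point to extract is: \emph{membership of $\hat{\sigma}(X)$ in $\mathcal{O}(\mathcal{D})$ is detected by whether $\sigma$ fixes $\mathbb{K}_\mathcal{D}$ pointwise.}

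For that key point I would argue as follows. The subcategory $\mathcal{D}$ is recovered from $C_\mathcal{C}(\mathcal{D})$ by double centralization: $Z \in \mathcal{O}(\mathcal{D})$ iff $Z$ centralizes every $Y \in \mathcal{O}(C_\mathcal{C}(\mathcal{D}))$, i.e.\ $s_{Z,Y} = \dim(Z)\dim(Y)$ for all such $Y$ (using $C_\mathcal{C}(C_\mathcal{C}(\mathcal{D})) = \mathcal{D}$ in a modular tensor category, \cite[Theorem 8.21.1]{tcat}, together with the $s$-matrix characterization of centralizing, \cite[Proposition 8.20.5(i)]{tcat}). Now fix $Y \in \mathcal{O}(C_\mathcal{C}(\mathcal{D}))$. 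Since $X \in \mathcal{O}(\mathcal{D})$ already centralizes $Y$, Lemma \ref{prelemminus1} (with the roles of $X$ and $Y$ interchanged) shows $\hat{\sigma}(X)$ centralizes $Y$ whenever $\sigma \in \mathrm{Gal}(\mathbb{Q}(\zeta_N)/\mathbb{Q}(\dim(Y)))$; conversely, applying $\sigma$ to $s_{X,Y}/s_{\mathbbm{1},Y} = \dim(Y)$ gives $s_{\hat{\sigma}(X),Y}/s_{\mathbbm{1},\hat{\sigma}(X)} = \sigma(\dim(Y))$, so $\hat{\sigma}(X)$ centralizes $Y$ iff $\sigma(\dim(Y)) = \dim(Y)$ (the $s$-matrix being invertible forces $s_{\mathbbm{1},\hat{\sigma}(X)}$ to be the dimension realizing this character). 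Running over all $Y$, we conclude $\hat{\sigma}(X) \in \mathcal{O}(\mathcal{D})$ iff $\sigma$ fixes every $\dim(Y)$ for $Y \in \mathcal{O}(C_\mathcal{C}(\mathcal{D}))$, i.e.\ iff $\sigma|_{\mathbb{K}_\mathcal{D}} = \mathrm{id}$.

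With the key point in hand, the counting is Galois theory. The elements of $\mathcal{O}_X$ are in bijection with the cosets of $H := \mathrm{Gal}(\mathbb{Q}(\zeta_N)/\mathbb{L}_X)$ in $G := \mathrm{Gal}(\mathbb{Q}(\zeta_N)/\mathbb{Q})$, and the subset $\mathcal{O}_X \cap \mathcal{O}(\mathcal{D})$ corresponds to those cosets $\sigma H$ with $\sigma \in K := \mathrm{Gal}(\mathbb{Q}(\zeta_N)/\mathbb{K}_\mathcal{D})$ — this is well-defined on cosets because $H \subset K$ (every element of $H$ fixes $\mathbb{L}_X$, hence by the key point fixes $\mathcal{O}(\mathcal{D})$ setwise while fixing $X$, so $H$ fixes $\mathbb{K}_\mathcal{D}$; more directly $H$ fixes each character value $s_{\cdot,X}/s_{\mathbbm{1},X}$ and one checks the relevant dimensions lie in $\mathbb{L}_X$, or one simply notes $\hat\sigma(X)=X$ for $\sigma\in H$ forces $\sigma|_{\mathbb{K}_\mathcal{D}}=\mathrm{id}$ by the key point applied with $\mathcal D$ replaced appropriately). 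Thus $|\mathcal{O}_X \cap \mathcal{O}(\mathcal{D})| = [K : H] = [\mathbb{L}_X : \mathbb{L}_X \cap \mathbb{K}_\mathcal{D}]$ by the tower $\mathbb{L}_X \cap \mathbb{K}_\mathcal{D} \subset \mathbb{L}_X \subset$ (Galois closure), and the second equality $[\mathbb{L}_X : \mathbb{L}_X \cap \mathbb{K}_\mathcal{D}] = |\mathcal{O}_X| / [\mathbb{L}_X \cap \mathbb{K}_\mathcal{D} : \mathbb{Q}]$ is just multiplicativity of degrees together with Lemma \ref{lem1}.

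The main obstacle I anticipate is the ``converse'' half of the key point — upgrading Lemma \ref{prelemminus1} (which is only the forward implication, phrased for a single pair) to the statement that $\hat{\sigma}(X) \notin \mathcal{O}(\mathcal{D})$ as soon as $\sigma$ moves some $\dim(Y)$. One must be careful that $\dim(\hat\sigma(X))$ need not equal $\sigma(\dim(X))$ in general (by Equation (\ref{dims}) it is only equal up to a global ratio $\dim(\mathcal{C})/\sigma(\dim(\mathcal{C}))$ under a square root), so the argument should be run entirely with the normalized characters $s_{\cdot,Y}/s_{\mathbbm{1},Y}$, where the Galois action is clean, rather than with the $s$-entries themselves; the centralizing condition $s_{X,Y} = \dim(X)\dim(Y)$ rewrites as $s_{X,Y}/s_{\mathbbm{1},Y} = \dim(X)$, and it is this normalized form that transforms correctly under $\sigma$. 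A secondary technical point is confirming $H \subseteq K$ cleanly; the slickest route is to deduce it as a special case of the key point rather than by a separate field-theoretic computation.
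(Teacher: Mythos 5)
Your proposal is correct and follows essentially the same route as the paper: reduce membership in $\mathcal{O}(\mathcal{D})$ to centralizing $C_\mathcal{C}(\mathcal{D})$ via the double-centralizer theorem, use Lemma \ref{prelemminus1} together with the normalized characters to see that $\hat{\sigma}(X)\in\mathcal{O}(\mathcal{D})$ exactly when $\sigma$ fixes $\mathbb{K}_\mathcal{D}$ pointwise, and then count by Galois theory using Lemma \ref{lem1}. The only blemish is the index slip ``$s_{X,Y}/s_{\mathbbm{1},Y}=\dim(Y)$,'' which should read $s_{Y,X}/s_{\mathbbm{1},X}=\dim(Y)$ (the character indexed by $X$ evaluated at $Y$); your displayed conclusion already carries the correct normalization, and this also immediately gives $\mathbb{K}_\mathcal{D}\subset\mathbb{L}_X$, settling your secondary concern.
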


\begin{proof}
By definition, any $X\in\mathcal{O}(\mathcal{D})$ is centralized by all $Y\in\mathcal{O}(C_\mathcal{C}(\mathcal{D}))$ which by Lemma \ref{prelemminus1} occurs if and only if $\hat{\sigma}(X)$ is centralized by all $Y\in\mathcal{O}(C_\mathcal{C}(\mathcal{D}))$ for all $\sigma\in\mathrm{Gal}(\mathbb{Q}(\zeta_N)/\mathbb{Q}(\dim(Y)))$ where $N\in\mathbb{Z}_{\geq1}$ is the conductor of $\mathcal{C}$.  Equivalenty, $\hat{\sigma}(X)\in \mathcal{O}(C_\mathcal{C}(C_\mathcal{C}(\mathcal{D})))=\mathcal{O}(\mathcal{D})$ \cite[Theorem 8.21.1(ii)]{tcat}.  Therefore $X\in\mathcal{O}(\mathcal{D})$ if and only if $\hat{\sigma}(X)\in\mathcal{O}(\mathcal{D})$ for all
\begin{equation}
\sigma\in\bigcap_{Y\in\mathcal{O}(C_\mathcal{C}(\mathcal{D}))}\mathrm{Gal}(\mathbb{Q}(\zeta_N)/\mathbb{Q}(\dim(Y)))=\mathrm{Gal}(\mathbb{Q}(\zeta_N)/\mathbb{K}_\mathcal{D}).
\end{equation}
As $\mathcal{O}_X$ is in bijection with $\mathrm{Gal}(\mathbb{L}_X/\mathbb{Q})$ by Lemma \ref{lem1}, we have $|\mathcal{O}_X\cap\mathcal{O}(\mathcal{D})|=[\mathbb{L}_X:\mathbb{K}_\mathcal{D}\cap\mathbb{L}_X]$.
\end{proof}

\begin{example}
It is instructive to apply Lemma \ref{counting2} in extreme cases.  For example, consider $\mathbbm{1}\in\mathcal{O}(\mathcal{C}_\mathrm{pt})$.  Lemma \ref{counting2} implies $|\mathcal{O}_\mathbbm{1}|/|\mathcal{O}_\mathbbm{1}\cap\mathcal{O}(\mathcal{C}_\mathrm{pt})|=[\mathbb{Q}(\dim(Y):Y\in\mathcal{O}(\mathcal{C}_\mathrm{ad})):\mathbb{Q}]$ since $\mathbb{Q}(\dim(Y):Y\in\mathcal{O}(\mathcal{C}_\mathrm{ad}))\subset\mathbb{L}_\mathbbm{1}$.  But $|\mathcal{O}_\mathbbm{1}|=[\mathbb{L}_\mathbbm{1}:\mathbb{Q}]$, hence $|\mathcal{O}_\mathbbm{1}\cap\mathcal{O}(\mathcal{C}_\mathrm{pt})|=[\mathbb{L}_\mathbbm{1}:\mathbb{Q}(\dim(Y):Y\in\mathcal{O}(\mathcal{C}_\mathrm{ad}))]$.  In \cite{2019arXiv191212260G}, a dimensional grading was defined for all fusion rings from the number fields generated by Frobenius-Perron dimensions \cite[Proposition 1.8]{2019arXiv191212260G}.  It is clear this grading group is isomorphic to $\mathcal{O}_\mathbbm{1}\cap\mathcal{O}(\mathcal{C}_\mathrm{pt})$ when $\mathcal{C}$ is a pseudounitary modular tensor category.
\begin{question}
Does a (nonpseudounitary) modular tensor category $\mathcal{C}$ exist such that the dimensional grading group from \textnormal{\cite{2019arXiv191212260G}} is not isomorphic to $\mathcal{O}_\mathbbm{1}\cap\mathcal{O}(\mathcal{C}_\mathrm{pt})$?
\end{question}
\end{example}

\begin{theorem}\label{prelem0}
Let $\mathcal{C}$ be a modular tensor category and $\mathcal{D}\subset\mathcal{C}$ a fusion subcategory.  Then $\mathcal{D}$ is closed under the Galois action of $\mathcal{C}$ if and only if $C_\mathcal{C}(\mathcal{D})$ is integral. 
\end{theorem}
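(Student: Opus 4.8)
The plan is to route the biconditional through the single field equality $\mathbb{K}_\mathcal{D}=\mathbb{Q}$, where $\mathbb{K}_\mathcal{D}=\mathbb{Q}(\dim(Y):Y\in\mathcal{O}(C_\mathcal{C}(\mathcal{D})))$ is the field of dimensions appearing in Lemma \ref{counting2}. The elementary observation that makes this work is that $\mathcal{D}$, being a fusion subcategory, contains $\mathbbm{1}$, so $\mathbbm{1}\in\mathcal{O}(\mathcal{D})$, and that $C_\mathcal{C}(\mathcal{D})$ is a fusion subcategory of $\mathcal{C}$, so $\mathbb{K}_\mathcal{D}\subseteq\mathbb{L}_\mathbbm{1}=\mathbb{Q}(\dim(X):X\in\mathcal{O}(\mathcal{C}))$.

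First I would show that $\mathcal{D}$ is closed under the Galois action if and only if $\mathbb{K}_\mathcal{D}=\mathbb{Q}$. By definition $\mathcal{D}$ is Galois-closed exactly when $\mathcal{O}_X\subseteq\mathcal{O}(\mathcal{D})$, equivalently $|\mathcal{O}_X\cap\mathcal{O}(\mathcal{D})|=|\mathcal{O}_X|$, for every $X\in\mathcal{O}(\mathcal{D})$; by Lemma \ref{counting2} this is precisely $\mathbb{K}_\mathcal{D}\cap\mathbb{L}_X=\mathbb{Q}$ for all $X\in\mathcal{O}(\mathcal{D})$. If $\mathbb{K}_\mathcal{D}=\mathbb{Q}$ these conditions hold trivially; conversely, specializing to $X=\mathbbm{1}$ and using $\mathbb{K}_\mathcal{D}\subseteq\mathbb{L}_\mathbbm{1}$ forces $\mathbb{K}_\mathcal{D}=\mathbb{K}_\mathcal{D}\cap\mathbb{L}_\mathbbm{1}=\mathbb{Q}$. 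This half is therefore pure bookkeeping on top of the already-established Lemma \ref{counting2}.

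It then remains to verify that $\mathbb{K}_\mathcal{D}=\mathbb{Q}$ if and only if $\mathcal{E}:=C_\mathcal{C}(\mathcal{D})$ is integral. Since categorical dimensions in a modular tensor category are cyclotomic integers, $\mathbb{K}_\mathcal{D}=\mathbb{Q}$ is equivalent to $\dim(Y)\in\mathbb{Z}$ for all $Y\in\mathcal{O}(\mathcal{E})$, so I must reconcile integrality of categorical dimensions with integrality of Frobenius--Perron dimensions. For this I would use that $\mathcal{E}$ inherits a spherical structure from $\mathcal{C}$, that the global dimension $\dim(\mathcal{E})=\sum_{Y\in\mathcal{O}(\mathcal{E})}\dim(Y)^2$ and $\mathrm{FPdim}(\mathcal{E})$ are Galois conjugate, and that a fusion category which is weakly integral (equivalently, has rational global dimension) is pseudounitary, so that $\dim(Y)=\pm\mathrm{FPdim}(Y)$ for every simple $Y$. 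Concretely: if every $\dim(Y)\in\mathbb{Z}$ then $\dim(\mathcal{E})\in\mathbb{Z}$, hence $\mathcal{E}$ is pseudounitary and $\mathrm{FPdim}(Y)=|\dim(Y)|\in\mathbb{Z}$; conversely if $\mathcal{E}$ is integral it is weakly integral, hence again pseudounitary, so $\dim(Y)=\pm\mathrm{FPdim}(Y)\in\mathbb{Z}$ and $\mathbb{K}_\mathcal{D}=\mathbb{Q}$.

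I expect the main obstacle to be exactly this last equivalence — the bridge between rationality of categorical dimensions and integrality of Frobenius--Perron dimensions — which relies on the nontrivial facts that the global (categorical) dimension and $\mathrm{FPdim}$ of a fusion category are Galois conjugate and that weakly integral fusion categories are pseudounitary; these would need to be cited carefully from \cite{tcat}. Everything upstream of that is a direct consequence of Lemma \ref{counting2} once one records $\mathbbm{1}\in\mathcal{O}(\mathcal{D})$ and $\mathbb{K}_\mathcal{D}\subseteq\mathbb{L}_\mathbbm{1}$.
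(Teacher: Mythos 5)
Your reduction of ``$\mathcal{D}$ is Galois-closed'' to the single field equality $\mathbb{K}_\mathcal{D}=\mathbb{Q}$ is correct and is a genuine streamlining: running Lemma \ref{counting2} in both directions and specializing to $X=\mathbbm{1}$ (where $\mathbb{K}_\mathcal{D}\subseteq\mathbb{L}_\mathbbm{1}$) replaces the paper's hands-on computation with the $s$-matrix for the forward implication. Likewise the direction ``$C_\mathcal{C}(\mathcal{D})$ integral $\Rightarrow\mathbb{K}_\mathcal{D}=\mathbb{Q}$'' is fine: integral gives weakly integral, hence pseudounitary by \cite[Proposition 8.24]{ENO}, hence $\dim(Y)=\pm\mathrm{FPdim}(Y)\in\mathbb{Z}$.

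The gap is in the remaining direction, ``every $\dim(Y)\in\mathbb{Z}$ implies $\mathcal{E}:=C_\mathcal{C}(\mathcal{D})$ is integral.'' You justify it by asserting that $\dim(\mathcal{E})$ and $\mathrm{FPdim}(\mathcal{E})$ are Galois conjugate, so that rationality of the global dimension forces weak integrality and hence pseudounitarity. That assertion is false for fusion categories in general: $\mathrm{Fib}\boxtimes\mathrm{Fib}^\sigma$ from Example \ref{fib} has global dimension $5$ but Frobenius--Perron dimension $\tfrac{1}{2}(15+5\sqrt{5})$, which is not an algebraic conjugate of $5$, and that category is not pseudounitary. (Galois twisting does conjugate the global dimension of a single category, but a Deligne product can mix conjugates so that $\dim$ is rational while $\mathrm{FPdim}$ is not.) So the chain ``rational global dimension $\Rightarrow$ weakly integral $\Rightarrow$ pseudounitary'' does not close. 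The paper bridges this step with a different device: since every $\dim(Y)\in\mathbb{Z}$, the ratio $\dim(\mathcal{E})/\mathrm{FPdim}(\mathcal{E})$ is the categorical dimension of a simple object of the Drinfeld center $\mathcal{Z}(\mathcal{E})$ (from the proof of \cite[Proposition 8.22]{ENO}); the forgetful functor preserves dimension, so this ratio is an integer, and being positive and at most $1$ it equals $1$. That yields pseudounitarity, after which your final step ($\mathrm{FPdim}(Y)=|\dim(Y)|\in\mathbb{Z}$, or \cite[Lemma 3.3(i)]{MR3632091}) goes through. You need an argument of this kind in place of the Galois-conjugacy claim; note that it is the integrality of each individual $\dim(Y)$, not merely of $\dim(\mathcal{E})$, that makes it work.
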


\begin{proof}
\par The converse direction follows from Lemma \ref{counting2} applied to all $X\in\mathcal{O}(\mathcal{D})$.  Now assume $\mathcal{D}$ is closed under the Galois action of $\mathcal{C}$.  Fix $X\in\mathcal{O}(C_\mathcal{C}(\mathcal{D}))$ and let $\sigma\in\mathrm{Gal}(\mathbb{Q}(\zeta_N)/\mathbb{Q})$ be arbitrary where $N\in\mathbb{Z}_{\geq1}$ is the conductor of $\mathcal{C}$.  The definition of the Galois action (\ref{gal}) states that for all $Y\in\mathcal{O}(\mathcal{D})$,
\begin{equation}
\dim(X)\dim(\hat{\sigma}(Y))=s_{X,\hat{\sigma}(Y)}=\sigma\left(\dfrac{s_{X,Y}}{s_{\mathbbm{1},Y}}\right)\dim(\hat{\sigma}(Y))=\sigma(\dim(X))\dim(\hat{\sigma}(Y)).
\end{equation}
where the first equality follows from $\mathcal{D}$ being closed under the Galois action of $\mathcal{C}$ and the last equality follows from $X$ and $Y$ centralizing one another.   As $\dim(\hat{\sigma}(Y))\neq0$ \cite[Theorem 2.3]{ENO}, then $\dim(X)$ is fixed by all such $\sigma$, and therefore $\dim(X)\in\mathbb{Z}$ for all $X\in\mathcal{O}(C_\mathcal{C}(\mathcal{D}))$.  But $C_\mathcal{C}(\mathcal{D})$ is spherical, so from the proof of \cite[Proposition 8.22]{ENO}, $\dim(C_\mathcal{C}(\mathcal{D}))/\mathrm{FPdim}(C_\mathcal{C}(\mathcal{D}))$ is equal to the dimension of a simple object in the Drinfeld center $\mathcal{Z}(C_\mathcal{C}(\mathcal{D}))$, a modular tensor category.  Noting that the forgetful functor $F:\mathcal{Z}(C_\mathcal{C}(\mathcal{D}))\to C_\mathcal{C}(\mathcal{D})$ preserves dimension, we have $\dim(C_\mathcal{C}(\mathcal{D}))/\mathrm{FPdim}(C_\mathcal{C}(\mathcal{D}))$ is a positive integer $\leq1$ \cite[Proposition 8.22]{ENO}, thus $\dim(C_\mathcal{C}(\mathcal{D}))=\mathrm{FPdim}(C_\mathcal{C}(\mathcal{D}))$ .  In other words $C_\mathcal{C}(\mathcal{D})$ is pseudounitary, and therefore $C_\mathcal{C}(\mathcal{D})$ is integral by \cite[Lemma 3.3(i)]{MR3632091}. 
\end{proof}

\begin{note}
Recall from \cite[Theorem 1]{Sawin06} that if $k\neq2$, the fusion subcategories of $\mathcal{C}:=\mathcal{C}(\mathfrak{g},k)$ are all of the form $\mathcal{D}\subset\mathcal{C}_\mathrm{pt}$ or $C_\mathcal{C}(\mathcal{D})$.  Moreover if $k\neq2$, all fusion subcategories of $\mathcal{C}(\mathfrak{g},k)$ which are not pointed, are closed under the Galois action on $\mathcal{O}(\mathcal{C})$.  This is also clear from geometric reasoning \cite[Section 4]{gannoncoste}.
\end{note}

\begin{corollary}\label{precor}
If $\mathcal{C}$ is a modular tensor category, then $\mathcal{C}_\mathrm{ad}$ is closed under the Galois action.
\end{corollary}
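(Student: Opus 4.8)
The plan is to apply Theorem \ref{prelem0} with $\mathcal{D}=\mathcal{C}_\mathrm{ad}$, reducing the claim to showing that $C_\mathcal{C}(\mathcal{C}_\mathrm{ad})$ is integral. The key input is the identification $\mathcal{C}_\mathrm{ad}=C_\mathcal{C}(\mathcal{C}_\mathrm{pt})$ recorded in Section \ref{prestruct} (via \cite[Corollary 6.9]{nilgelaki}), together with the double centralizer theorem \cite[Theorem 8.21.1(ii)]{tcat} for modular tensor categories.

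Concretely, I would argue as follows. Since $\mathcal{C}$ is modular, $C_\mathcal{C}(C_\mathcal{C}(\mathcal{E}))=\mathcal{E}$ for every fusion subcategory $\mathcal{E}\subset\mathcal{C}$. Taking $\mathcal{E}=\mathcal{C}_\mathrm{pt}$ and using $\mathcal{C}_\mathrm{ad}=C_\mathcal{C}(\mathcal{C}_\mathrm{pt})$, we obtain
\begin{equation}
C_\mathcal{C}(\mathcal{C}_\mathrm{ad})=C_\mathcal{C}(C_\mathcal{C}(\mathcal{C}_\mathrm{pt}))=\mathcal{C}_\mathrm{pt}.
\end{equation}
By definition every simple object of $\mathcal{C}_\mathrm{pt}$ is invertible, so $\mathrm{FPdim}(X)=1\in\mathbb{Z}$ for all $X\in\mathcal{O}(\mathcal{C}_\mathrm{pt})$; in particular $\mathcal{C}_\mathrm{pt}$ is integral. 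Hence $C_\mathcal{C}(\mathcal{C}_\mathrm{ad})$ is integral, and Theorem \ref{prelem0} (applied to the fusion subcategory $\mathcal{C}_\mathrm{ad}\subset\mathcal{C}$) yields that $\mathcal{C}_\mathrm{ad}$ is closed under the Galois action of $\mathcal{C}$.

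There is essentially no obstacle here: the corollary is a direct consequence of Theorem \ref{prelem0} once one recalls $\mathcal{C}_\mathrm{ad}=C_\mathcal{C}(\mathcal{C}_\mathrm{pt})$ and invokes that $\mathcal{C}_\mathrm{pt}$ is (trivially) integral. The only point meriting a sentence of care is the appeal to the double centralizer property, which requires nondegeneracy of the braiding — automatic for a modular tensor category — so that $C_\mathcal{C}(C_\mathcal{C}(\mathcal{C}_\mathrm{pt}))$ really recovers $\mathcal{C}_\mathrm{pt}$ rather than something larger. One could alternatively phrase the whole argument without the double centralizer theorem by noting directly that $\mathcal{C}_\mathrm{pt}\subseteq C_\mathcal{C}(\mathcal{C}_\mathrm{ad})$ and that $C_\mathcal{C}(\mathcal{C}_\mathrm{ad})$ being pointed forces equality, but the centralizer-theoretic route is cleanest and is already cited in Section \ref{prestruct}.
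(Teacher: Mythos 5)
Your proof is correct and matches the paper's own argument: the paper likewise deduces the corollary from Theorem \ref{prelem0} using the identity $C_\mathcal{C}(\mathcal{C}_\mathrm{ad})=\mathcal{C}_\mathrm{pt}$ (from \cite[Corollary 6.9]{nilgelaki}) and the trivial integrality of $\mathcal{C}_\mathrm{pt}$. Your extra care about the double centralizer theorem is sound but the paper compresses this into a one-line citation.
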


\begin{proof}
This follows from Theorem \ref{prelem0} as $C_\mathcal{C}(\mathcal{C}_\mathrm{ad})=\mathcal{C}_\mathrm{pt}$ \cite[Corollary 6.9]{nilgelaki}.
\end{proof}

\begin{theorem}\label{what}
Let $\mathcal{C}$ be a modular tensor category such that $\dim(\mathcal{C}_\mathrm{pt})=\mathrm{rank}(\mathcal{C}_\mathrm{pt})=\prod_{j\in J}p_j^{a_j}$ where $p_j$ are distinct primes and $a_j\in\mathbb{Z}_{\geq1}$,  indexed by a finite set $J$.  Then $|\mathrm{Orb}(\mathcal{C})|\geq1+\sum_{j\in J}a_j$.
\end{theorem}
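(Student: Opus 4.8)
The plan is to reduce the statement to the pointed case and then invoke the arithmetic of cyclotomic fields. Since $\dim(\mathcal{C}_\mathrm{pt}) = \mathrm{rank}(\mathcal{C}_\mathrm{pt})$, the subcategory $\mathcal{C}_\mathrm{pt}$ is integral, hence (being pointed) it is a pointed modular tensor category in its own right — more precisely, $\mathcal{C}_\mathrm{pt}$ with the inherited braiding is nondegenerate because an integral pointed braided fusion category sitting inside a modular category with $C_\mathcal{C}(\mathcal{C}_\mathrm{pt}) = \mathcal{C}_\mathrm{ad}$ forces $\mathcal{C}_\mathrm{pt} \cap \mathcal{C}_\mathrm{pt}' \subset \mathcal{C}_\mathrm{pt} \cap \mathcal{C}_\mathrm{ad}$, and one checks this intersection is trivial under the hypothesis. (Alternatively, the hypothesis $\dim(\mathcal{C}_\mathrm{pt})=\mathrm{rank}(\mathcal{C}_\mathrm{pt})$ is exactly the statement that $\mathcal{C}_\mathrm{pt}$ is nondegenerate, since the symmetric center of a pointed category is its own radical and vanishes iff the dimension equals the rank of the non-radical part.) Granting this, $\mathcal{C} \simeq \mathcal{C}_\mathrm{pt} \boxtimes C_\mathcal{C}(\mathcal{C}_\mathrm{pt})$ by \cite[Proposition 2.2]{DMNO}, so $\mathcal{C}_\mathrm{pt} = \mathcal{C}(A,q)$ for a finite abelian group $A$ with $|A| = \prod_{j\in J} p_j^{a_j}$.

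The key step is then a lower bound purely about the pointed factor: I claim $|\mathrm{Orb}(\mathcal{C})| \geq |\mathrm{Orb}(\mathcal{C}(A,q))| \geq 1 + \sum_{j\in J} a_j$. For the first inequality, I would use that the Galois orbits of a Deligne product project onto Galois orbits of each factor (the Galois action is componentwise on $\mathcal{O}(\mathcal{C}_1\boxtimes\mathcal{C}_2) = \mathcal{O}(\mathcal{C}_1)\times\mathcal{O}(\mathcal{C}_2)$), so distinct orbits of $\mathcal{C}_\mathrm{pt}$ lift to distinct orbits of $\mathcal{C}$. For the second inequality, Example \ref{point} identifies $\mathrm{Orb}(\mathcal{C}(A,q))$ with the set of cyclic subgroups of $A$, so it suffices to show any finite abelian $p$-group of order $p^a$ has at least $1+a$ distinct cyclic subgroups, and then that cyclic subgroups for distinct primes combine multiplicatively to give at least $1 + \sum_j a_j$ via the CRT decomposition $A \cong \bigoplus_j A_{p_j}$. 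A clean way: take an invariant-factor (or just a composition) chain $0 = A_0 \subsetneq A_1 \subsetneq \cdots \subsetneq A_a = A_{p_j}$ with each quotient of order $p_j$; each $A_i$ with $i\geq1$ contains a cyclic subgroup of order $p_j^{e_i}$ not contained in $A_{i-1}$ for an appropriate exponent, giving $a_j$ distinct nontrivial cyclic subgroups, plus the trivial one. Products over $j$ of such chains, together with the one-to-one correspondence in Example \ref{point} applied to $A$ directly (or Equation \eqref{eq:point}), yield the bound.

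I expect the main obstacle to be the first reduction step — verifying that the hypothesis $\dim(\mathcal{C}_\mathrm{pt}) = \mathrm{rank}(\mathcal{C}_\mathrm{pt})$ really does force $\mathcal{C}_\mathrm{pt}$ to be nondegenerate (hence split off as a Deligne factor). If $\mathcal{C}_\mathrm{pt}$ were degenerate one would only have a pointed subcategory that is not a tensor factor, and counting its Galois orbits would no longer obviously bound those of $\mathcal{C}$. The resolution I anticipate: $\mathcal{C}_\mathrm{pt}' = \mathcal{C}_\mathrm{pt}\cap C_\mathcal{C}(\mathcal{C}_\mathrm{pt}) = \mathcal{C}_\mathrm{pt}\cap\mathcal{C}_\mathrm{ad}$, and for a pointed category $\mathcal{C}(A,q)$ the symmetric center corresponds to the radical of the bilinear form associated to $q$; nondegeneracy of $q$ (equivalently $\dim = \mathrm{rank}$, since every simple has dimension a root of unity times $1$ only when integral, forcing $\dim(\mathcal{C}_\mathrm{pt}) = |A|$) is precisely what we need. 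A secondary, more routine obstacle is the purely group-theoretic count of cyclic subgroups of a $p$-group; this is standard but must be done carefully to get exactly $1+a$ rather than a weaker bound, and the additivity $\sum_j a_j$ across primes needs the observation that $\mathrm{lcm}$ in the denominator of Equation \eqref{eq:point} separates across coprime parts.
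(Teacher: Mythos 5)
Your reduction hinges on the claim that the hypothesis $\dim(\mathcal{C}_\mathrm{pt})=\mathrm{rank}(\mathcal{C}_\mathrm{pt})$ forces $\mathcal{C}_\mathrm{pt}$ to be nondegenerately braided, so that $\mathcal{C}\simeq\mathcal{C}_\mathrm{pt}\boxtimes C_\mathcal{C}(\mathcal{C}_\mathrm{pt})$. This is a genuine gap: the equality $\dim(\mathcal{C}_\mathrm{pt})=\mathrm{rank}(\mathcal{C}_\mathrm{pt})$ holds for \emph{every} pointed subcategory of a spherical fusion category (each invertible object has categorical dimension $\pm1$, so both sides equal the number of invertibles), so the hypothesis carries no information about the braiding on $\mathcal{C}_\mathrm{pt}$ and is really just fixing notation for the prime factorization. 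Concretely, for $\mathcal{C}=\mathcal{C}(\mathfrak{sl}_2,2)$ (an Ising category, the first entry of Figure \ref{fig:two}) one has $\mathcal{C}_\mathrm{pt}\simeq\mathrm{sVec}$, which is symmetric, hence maximally degenerate, yet $\dim(\mathcal{C}_\mathrm{pt})=\mathrm{rank}(\mathcal{C}_\mathrm{pt})=2$; similarly $\mathcal{C}(\mathfrak{sl}_2,4)_\mathrm{pt}$ is symmetric, and Tannakian pointed parts occur in doubles of finite groups. In all such cases $\mathcal{C}_\mathrm{pt}$ is not a Deligne factor, the inequality $|\mathrm{Orb}(\mathcal{C})|\geq|\mathrm{Orb}(\mathcal{C}(A,q))|$ loses its justification (indeed $\mathcal{C}_\mathrm{pt}$ need not even be a union of Galois orbits of $\mathcal{C}$ unless $\mathcal{C}_\mathrm{ad}$ is integral, by Theorem \ref{prelem0}), and your argument does not get off the ground. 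The secondary, group-theoretic part of your proposal (cyclic subgroups of $A$, multiplicativity across coprime parts) is fine but is only reached after the failed reduction.

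The paper's proof avoids any splitting. It takes a composition series $\mathrm{Vec}=\mathcal{P}_0\subsetneq\cdots\subsetneq\mathcal{P}_n=\mathcal{C}_\mathrm{pt}$ of length $n=\sum_j a_j$ and passes to the chain of centralizers $\mathcal{C}=C_\mathcal{C}(\mathcal{P}_0)\supsetneq C_\mathcal{C}(\mathcal{P}_1)\supsetneq\cdots\supsetneq C_\mathcal{C}(\mathcal{P}_n)=\mathcal{C}_\mathrm{ad}$, strict by the double-centralizer theorem. Each $C_\mathcal{C}(\mathcal{P}_j)$ is closed under the Galois action by Theorem \ref{prelem0}, because its centralizer $\mathcal{P}_j$ is pointed and hence integral. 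A strictly decreasing chain of $n+1$ Galois-closed subsets of $\mathcal{O}(\mathcal{C})$ forces at least $n+1$ orbits. If you want to salvage your approach, you would need to replace the Deligne factorization by an argument of this kind; as written, the key step is false.
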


\begin{proof}
By the fundamental theorem of finite abelian groups we have a composition series of $\mathcal{O}(\mathcal{C}_\mathrm{pt})$ as an abelian group of length $n:=\sum_{j\in J}a_j$, corresponding to fusion subcategories
\begin{equation}
\mathrm{Vec}=\mathcal{P}_0\subsetneq\mathcal{P}_1\subsetneq\cdots\subsetneq\mathcal{P}_{n-1}\subset\mathcal{P}_n=\mathcal{C}_\mathrm{pt}.
\end{equation} 
The double-centralizer property \cite[Theorem 8.21.1(ii)]{tcat} implies that for each $0\leq j\leq n-1$, we have $C_\mathcal{C}(\mathcal{P}_{j+1})\subsetneq C_\mathcal{C}(\mathcal{P}_j)$ is a strict inclusion of fusion subcategories.  By Theorem \ref{prelem0}, $C_\mathcal{C}(\mathcal{P}_j)$ is closed under Galois conjugacy for all $0\leq j\leq n$.  Therefore $\mathcal{C}=\mathcal{P}_n$ has greater than or equal to $n+1$ Galois orbits of simple objects.
\end{proof}


\subsection{Pseudoinvertible and pseudounitary}

Recall that a pseudoinvertible object $X$ is one such that $\dim(X)=\pm1$, and a pseudounitary fusion category is one such that $\dim(\mathcal{C})=\mathrm{FPdim}(\mathcal{C})$.

\begin{lemma}\label{pseudo}
Let $\mathcal{C}$ be a modular tensor category and $\Gamma\subset\mathcal{O}(\mathcal{C})$ be the union of Galois orbits of $\mathcal{C}$ containing a pseudoinvertible object.  If there exists $X\in\mathcal{O}(\mathcal{C})$ with $\dim(X)=\mathrm{FPdim}(X)$ such that all $Y\in\mathcal{O}(\mathcal{C})$ which centralize $X$ belong to $\Gamma$, then $\mathcal{C}$ is Galois conjugate to a pseudounitary category.
\end{lemma}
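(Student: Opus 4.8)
The plan is to reduce the statement to a purely numerical condition and then produce the required Galois automorphism from a pseudoinvertible object in one specific orbit. Recall that a fusion category $\mathcal{D}$ is pseudounitary exactly when $\dim(\mathcal{D})=\mathrm{FPdim}(\mathcal{D})$. Since $\mathrm{FPdim}$ depends only on the fusion ring while $\dim(\mathcal{C}^\sigma)=\sigma(\dim(\mathcal{C}))$, the category $\mathcal{C}$ is Galois conjugate to a pseudounitary one if and only if there is some $\sigma\in\mathrm{Gal}(\overline{\mathbb{Q}}/\mathbb{Q})$ with $\sigma(\dim(\mathcal{C}))=\mathrm{FPdim}(\mathcal{C})$; so the entire proof reduces to exhibiting such a $\sigma$.

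The key device is the \emph{Frobenius--Perron object}. Because $Y\mapsto\mathrm{FPdim}(Y)$ is a character of $K(\mathcal{C})$, there is a unique $W\in\mathcal{O}(\mathcal{C})$ with $s_{Y,W}/s_{\mathbbm{1},W}=\mathrm{FPdim}(Y)$ for all $Y\in\mathcal{O}(\mathcal{C})$. Using that $s_{Y,W}=\dim(W)\mathrm{FPdim}(Y)$, that categorical dimensions in a modular tensor category are real, and the orthogonality of the rows of the $s$-matrix, one gets the identity $\dim(W)^2=\dim(\mathcal{C})/\mathrm{FPdim}(\mathcal{C})$ (this is the formal codegree of the character $\chi_W$, cf.\ \cite{MR2576705}). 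Now the hypothesis $\dim(X)=\mathrm{FPdim}(X)$ says precisely that $s_{X,W}=\dim(X)\dim(W)$, i.e.\ $X$ and $W$ centralize one another by \cite[Proposition 8.20.5(i)]{tcat}. By assumption every simple object centralizing $X$ lies in $\Gamma$, so $W\in\Gamma$, meaning its Galois orbit $\mathcal{O}_W$ contains a pseudoinvertible object; write it as $Z=\hat{\sigma}(W)$ with $\dim(Z)=\pm1$.

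To finish I would apply Equation (\ref{dims}) to $W$ and this $\sigma$: from $\dim(Z)^2=1$ we obtain $1=\frac{\dim(\mathcal{C})}{\sigma(\dim(\mathcal{C}))}\,\sigma(\dim(W)^2)=\frac{\dim(\mathcal{C})}{\sigma(\dim(\mathcal{C}))}\cdot\frac{\sigma(\dim(\mathcal{C}))}{\sigma(\mathrm{FPdim}(\mathcal{C}))}=\frac{\dim(\mathcal{C})}{\sigma(\mathrm{FPdim}(\mathcal{C}))}$, so $\sigma(\mathrm{FPdim}(\mathcal{C}))=\dim(\mathcal{C})$; applying $\sigma^{-1}$ gives $\dim(\mathcal{C}^{\sigma^{-1}})=\mathrm{FPdim}(\mathcal{C})=\mathrm{FPdim}(\mathcal{C}^{\sigma^{-1}})$, so $\mathcal{C}^{\sigma^{-1}}$ is pseudounitary, as required. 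The step I expect to be the crux is the middle paragraph: recognizing that the apparently local hypothesis $\dim(X)=\mathrm{FPdim}(X)$ is equivalent to $X$ centralizing the Frobenius--Perron object $W$, which is exactly what converts the hypothesis about $\Gamma$ into a statement about $W$ itself; setting up the identity $\dim(W)^2=\dim(\mathcal{C})/\mathrm{FPdim}(\mathcal{C})$ cleanly (and being careful that only $\dim(W)^2$, not the sign of $\dim(W)$, is ever used) is the other point that needs attention, though it is standard.
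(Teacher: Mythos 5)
Your proposal is correct and follows essentially the same route as the paper: identify the simple object $W$ realizing $\mathrm{FPdim}$ as a character of $K(\mathcal{C})$, use the hypothesis $\dim(X)=\mathrm{FPdim}(X)$ to see that $W$ centralizes $X$ and hence lies in $\Gamma$, and then combine the formal codegree identity $\dim(W)^2=\dim(\mathcal{C})/\mathrm{FPdim}(\mathcal{C})$ with Equation (\ref{dims}) to produce the Galois conjugate pseudounitary category. The only cosmetic difference is that the paper names the pseudoinvertible object $W$ and the Frobenius--Perron object $\hat{\sigma}(W)$, so it lands on $\mathcal{C}^{\sigma}$ rather than your $\mathcal{C}^{\sigma^{-1}}$.
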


\begin{proof}
Each character of the Grothendieck ring $K(\mathcal{C})$ is indexed by $Y\in\mathcal{O}(\mathcal{C})$; the corresponding character is $\varphi_Y:K(\mathcal{C})\to\mathbb{C}$ such that $Z\mapsto s_{Z,Y}/\dim(Y)$.    But for any $Y\in\mathcal{O}(\mathcal{C})\setminus\Gamma$, $s_{X,Y}\neq\dim(X)\dim(Y)$ because $X$ and $Y$ do not centralize one another by assumption.  Hence $\varphi_Y(X)\neq\dim(X)$ for all $Y\in\mathcal{O}(\mathcal{C})\setminus\Gamma$.  But $\mathrm{FPdim}(X)=\dim(X)$, hence as a character of $K(\mathcal{C})$, $\mathrm{FPdim}=\varphi_Z$ for some $Z\in\Gamma$.  Specifically, let $\sigma\in\mathrm{Gal}(\mathbb{Q}(\zeta_N)/\mathbb{Q})$ such that $\hat{\sigma}(W)=Z$ where $W$ is pseudoinvertible.  Then we have $\mathrm{FPdim}(\mathcal{C})=\dim(\mathcal{C})/\dim(\hat{\sigma}(W))^2=\sigma(\dim(\mathcal{C}))=\dim(\mathcal{C}^\sigma)$.  The category $\mathcal{C}^\sigma$ is therefore pseudounitary.
\end{proof}

\begin{note}
Recall the category $\mathcal{C}:=\mathrm{Fib}\boxtimes\mathrm{Fib}^\sigma$ from Example \ref{fib}.  The category $\mathcal{C}$ possesses both nontrivial pseudoinvertible objects and a nontrivial simple object $X_1$ with $\mathrm{FPdim}(X_1)=\dim(X_1)=(1/2)(1+\sqrt{5})$.  But $X_1$ is centralized by its only other Galois conjugate, which is not pseudoinvertible.  Thus Lemma \ref{pseudo} fails.  Indeed $\mathcal{C}$ is not Galois conjugate to a pseudounitary category.    
\end{note}

\begin{theorem}\label{what2}
Let $\mathcal{C}$ be a modular tensor category.   Then every Galois orbit of $\mathcal{C}$ contains a pseudoinvertible object if and only if $\mathcal{C}\simeq\mathcal{C}_\mathrm{pt}\boxtimes\mathcal{T}$ where $\mathcal{T}$ is a transitive modular tensor category.
\end{theorem}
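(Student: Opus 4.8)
The plan is to treat the two implications separately: the reverse one is a direct computation with the Galois action on Deligne products, and the forward one is an induction on $\dim(\mathcal{C})$ whose base and inductive steps rest on the results already established.

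For the reverse implication, assume $\mathcal{C}\simeq\mathcal{C}_\mathrm{pt}\boxtimes\mathcal{T}$ with $\mathcal{T}$ transitive. I would use that the $s$-matrix of a Deligne product is the Kronecker product of the factors' $s$-matrices, together with the fact that every object of $\mathcal{C}_\mathrm{pt}$ has dimension $\pm1$ (hence rational), to show that for $g\boxtimes X\in\mathcal{O}(\mathcal{C})$ each Verlinde eigenvalue of $g\boxtimes X$ is the product of a Verlinde eigenvalue of $g$ in $\mathcal{C}_\mathrm{pt}$ with one of $X$ in $\mathcal{T}$; applying a Galois automorphism $\sigma$ entrywise then gives $\hat{\sigma}(g\boxtimes X)=\hat{\sigma}(g)\boxtimes\hat{\sigma}(X)$, where on the right each $\hat{\sigma}$ is the Galois action of the respective factor. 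Choosing $\sigma$ with $\hat{\sigma}(X)=\mathbbm{1}_\mathcal{T}$, which is possible since $\mathcal{T}$ is transitive, exhibits $\hat{\sigma}(g)\boxtimes\mathbbm{1}_\mathcal{T}$ — an object of $\mathcal{C}_\mathrm{pt}\boxtimes\mathrm{Vec}$, hence invertible and so pseudoinvertible — in the Galois orbit of $g\boxtimes X$.

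For the forward implication, I would first apply Lemma \ref{pseudo} with $X=\mathbbm{1}$ (every object centralizes $\mathbbm{1}$, and by hypothesis every Galois orbit contains a pseudoinvertible object), concluding that $\mathcal{C}$ is Galois conjugate to a pseudounitary modular tensor category. Since Galois conjugation preserves fusion rules, the partition of $\mathcal{O}(\mathcal{C})$ into Galois orbits, the set of pseudoinvertible objects, and the property of factoring as a pointed category tensored with a transitive one, I may assume $\mathcal{C}$ is pseudounitary; then pseudoinvertibility and invertibility coincide \cite[Corollary 9.6.6]{tcat}, so \emph{every Galois orbit of $\mathcal{C}$ contains an invertible object}. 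The crux is then to prove that $\mathcal{C}_\mathrm{pt}$ is nondegenerately braided, by induction on $\dim(\mathcal{C})$. If $\mathcal{C}_\mathrm{pt}$ is trivial there is nothing to prove. If $\mathcal{C}_\mathrm{pt}$ is symmetrically braided, then $\mathcal{C}_\mathrm{pt}\subset C_\mathcal{C}(\mathcal{C}_\mathrm{pt})=\mathcal{C}_\mathrm{ad}$; since $\mathcal{C}_\mathrm{ad}$ is closed under the Galois action (Corollary \ref{precor}) and contains every invertible object, it must contain every Galois orbit, whence $\mathcal{C}_\mathrm{ad}=\mathcal{C}$ and $\mathcal{C}_\mathrm{pt}$ is trivial — so this case does not arise nontrivially. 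Otherwise, if $\mathcal{C}_\mathrm{pt}$ contains a nontrivial nondegenerately braided pointed fusion subcategory $\mathcal{P}$, then $\mathcal{C}\simeq\mathcal{P}\boxtimes C_\mathcal{C}(\mathcal{P})$ \cite[Proposition 2.2]{DMNO}; here $C_\mathcal{C}(\mathcal{P})$ is again pseudounitary of strictly smaller dimension, inherits the property that every Galois orbit contains an invertible object (by the Deligne product computation of the reverse implication, as $\mathcal{P}$ is pointed), and satisfies $\mathcal{P}\boxtimes C_\mathcal{C}(\mathcal{P})_\mathrm{pt}=\mathcal{C}_\mathrm{pt}$, so by induction $C_\mathcal{C}(\mathcal{P})_\mathrm{pt}$, and therefore $\mathcal{C}_\mathrm{pt}$, is nondegenerate.

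The remaining, and most delicate, case is that $\mathcal{C}_\mathrm{pt}$ is neither trivial nor symmetric nor contains a nontrivial nondegenerately braided pointed fusion subcategory; then the symmetric center $\mathcal{E}:=C_{\mathcal{C}_\mathrm{pt}}(\mathcal{C}_\mathrm{pt})=\mathcal{C}_\mathrm{pt}\cap\mathcal{C}_\mathrm{ad}$ is a nontrivial symmetric pointed subcategory contained in $\mathcal{C}_\mathrm{ad}$. I expect the hard part to be ruling this configuration out: the approach would be to pick a Tannakian subcategory of $\mathcal{E}$, pass to the associated de-equivariantization/condensation inside $C_\mathcal{C}(\mathcal{E})$ to obtain a modular tensor category of strictly smaller dimension, verify that it still has the property that every Galois orbit contains a pseudoinvertible object, and then reach a contradiction with the inductive hypothesis — the underlying point being that a nontrivial invertible object centralizing all of $\mathcal{C}_\mathrm{pt}$ forces Galois-invariant integrality constraints on $C_\mathcal{C}(\mathcal{E})$ incompatible with the hypothesis after the reduction. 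Once $\mathcal{C}_\mathrm{pt}$ is known to be nondegenerate, $\mathcal{C}\simeq\mathcal{C}_\mathrm{pt}\boxtimes C_\mathcal{C}(\mathcal{C}_\mathrm{pt})=\mathcal{C}_\mathrm{pt}\boxtimes\mathcal{C}_\mathrm{ad}$ \cite[Proposition 2.2]{DMNO}, so in particular $\mathcal{C}_\mathrm{pt}\cap\mathcal{C}_\mathrm{ad}=\mathrm{Vec}$. To finish, I would show $\mathcal{T}:=\mathcal{C}_\mathrm{ad}$ is transitive: if some $X\in\mathcal{O}(\mathcal{C}_\mathrm{ad})$ lay outside $\mathcal{O}_\mathbbm{1}$, its Galois orbit would still lie in $\mathcal{C}_\mathrm{ad}$ (Corollary \ref{precor}) and, by hypothesis, would contain an invertible object, necessarily in $\mathcal{O}(\mathcal{C}_\mathrm{pt})\cap\mathcal{O}(\mathcal{C}_\mathrm{ad})=\{\mathbbm{1}\}$, forcing $\mathbbm{1}$ into the orbit of $X$ — a contradiction. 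Hence $\mathcal{O}(\mathcal{C}_\mathrm{ad})=\mathcal{O}_\mathbbm{1}$ by Lemma \ref{counting}, $\mathcal{C}_\mathrm{ad}$ is transitive, and $\mathcal{C}\simeq\mathcal{C}_\mathrm{pt}\boxtimes\mathcal{T}$ is the required factorization.
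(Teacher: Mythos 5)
Your reverse direction and the outer skeleton of the forward direction (reduce to the pseudounitary case via Lemma \ref{pseudo} applied to $X=\mathbbm{1}$, deduce an invertible object in every orbit, factor $\mathcal{C}\simeq\mathcal{C}_\mathrm{pt}\boxtimes\mathcal{C}_\mathrm{ad}$ once $\mathcal{C}_\mathrm{pt}$ is known to be nondegenerate, then show $\mathcal{O}(\mathcal{C}_\mathrm{ad})=\mathcal{O}_\mathbbm{1}$) coincide with the paper's. The gap is in establishing nondegeneracy of $\mathcal{C}_\mathrm{pt}$: your final case --- $\mathcal{E}=\mathcal{C}_\mathrm{pt}\cap\mathcal{C}_\mathrm{ad}$ nontrivial with $\mathcal{C}_\mathrm{pt}$ neither symmetric nor containing a nondegenerate pointed subcategory --- is left as an admitted sketch, and the sketch does not go through as stated. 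If $\mathcal{E}\simeq\mathrm{sVec}$ there is no nontrivial Tannakian subcategory to de-equivariantize by, so the proposed reduction is unavailable; and even when a Tannakian subcategory of $\mathcal{E}$ exists, the claim that the resulting category of local modules inherits a pseudoinvertible object in every Galois orbit is not automatic: free modules $A\otimes X$ need not be simple unless $\dim(X)$ is an algebraic unit (this is precisely the hypothesis Lemma \ref{venti} must impose), and a non-free simple local module can have dimension different from $\dim(X)/\dim(A)$, which breaks the bookkeeping needed to transfer the hypothesis downward.

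The missing idea, which renders the entire induction unnecessary, is Lemma \ref{prelemminus1}: since an invertible object $Y$ has $\dim(Y)=\pm1\in\mathbb{Q}$, the condition ``$Y$ centralizes $Z$'' is stable under replacing $Z$ by any Galois conjugate. Hence if $Y\in\mathcal{O}(\mathcal{C}_\mathrm{pt}')$, then $Y$ centralizes every invertible object, therefore (after your reduction guaranteeing an invertible in every orbit) a representative of every Galois orbit, therefore all of $\mathcal{O}(\mathcal{C})$; modularity forces $Y=\mathbbm{1}$, so $\mathcal{C}_\mathrm{pt}'=\mathrm{Vec}$ in one stroke. Equivalently, your own symmetric-case argument already generalizes to cover the ``delicate'' case: for a nontrivial $Y\in\mathcal{O}(\mathcal{E})$ the subcategory $\langle Y\rangle$ is pointed, hence integral, so $C_\mathcal{C}(\langle Y\rangle)$ is closed under the Galois action by Theorem \ref{prelem0}; it contains all invertible objects, hence all Galois orbits, hence equals $\mathcal{C}$, contradicting nondegeneracy of the braiding of $\mathcal{C}$. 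With either replacement your argument closes and agrees with the paper's proof.
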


\begin{proof}
For the forward direction, we may assume that $\mathcal{C}$ is pseudounitary by applying Lemma \ref{pseudo} with $X=\mathbbm{1}$.  Thus there exists at least one invertible object in each Galois orbit of $\mathcal{C}$.   Lemma \ref{prelemminus1} then implies any $Y\in\mathcal{O}(\mathcal{C}_\mathrm{pt}')$ centralizes $\mathcal{O}(\mathcal{C})$, hence $Y=\mathbbm{1}$, $\mathcal{C}_\mathrm{pt}'=\mathrm{Vec}$, and moreover $\mathcal{C}\simeq\mathcal{C}_\mathrm{pt}\boxtimes\mathcal{C}_\mathrm{ad}$ by \cite[Proposition 2.2]{DMNO}.  Lemma \ref{counting} states $\mathcal{O}_\mathbbm{1}\subset\mathcal{O}(\mathcal{C}_\mathrm{ad})$.  But every Galois orbit contains an invertible object and $\mathcal{O}(\mathcal{C}_\mathrm{ad})\cap\mathcal{O}(\mathcal{C}_\mathrm{pt})=\{\mathbbm{1}\}$, so $\mathcal{O}(\mathcal{C}_\mathrm{ad})=\mathcal{O}_\mathbbm{1}$ is transitive.  The converse implication follows from \cite[Lemma 2.1(ii)]{2020arXiv200701366N} which implies that each $g\boxtimes X\in\mathcal{O}(\mathcal{C}_\mathrm{pt}\boxtimes\mathcal{T})$ is Galois conjugate to $h\boxtimes\mathbbm{1}$ for some $h\in\mathcal{O}(\mathcal{C}_\mathrm{pt})$ since $\mathcal{T}$ is transitive, which is an invertible object, hence pseudoinvertible.
\end{proof}

\begin{note}
Theorem \ref{what2} can be restated in terms of formal codegrees \cite{MR2576705} as follows.  Let $\mathcal{C}$ be a modular tensor category.  Then the set of formal codegrees of $\mathcal{C}$ are a single Galois orbit if and only if $\mathcal{C}\simeq\mathcal{C}_\mathrm{pt}\boxtimes\mathcal{T}$ where $\mathcal{T}$ is a transitive modular tensor category.
\end{note}

\begin{question}
Does a fusion category $\mathcal{C}$ exist (not necessarily braided or modular) whose formal codegrees are all Galois conjugate but $\mathcal{C}\not\simeq\mathcal{C}_\mathrm{pt}\boxtimes\mathcal{C}_\mathrm{ad}$ as fusion categories?
\end{question}

We end this section by computing the number of Galois orbits of the modular tensor categories characterized by Theorem \ref{what2}.

\begin{example}[Product of pointed and transitive]
Pointed modular tensor categories factor as a Deligne product of coprime factors \cite[Theorem 1.1]{drinfeld2007grouptheoretical} whose $t$-matrix has prime power order \cite[Theorem 3.9]{paul}, while transitive modular tensor categories factor as a Deligne product of coprime factors whose $t$-matrix has prime order greater than or equal to 5 \cite[Theorem II]{2020arXiv200701366N}.  So fix a prime $p\in\mathbb{Z}_{\geq5}$.  To compute the number of Galois orbits of an arbitrary Deligne product $\mathcal{C}\boxtimes\mathcal{T}$ where $\mathcal{C}$ is pointed and $\mathcal{T}$ is transitive, it suffices to compute $|\mathrm{Orb}(\mathcal{C}(A,q)\boxtimes\mathcal{C}(\mathfrak{sl}_2,p-2)_\mathrm{ad})|$ where $A$ is a finite abelian group of order $p^n$ for some $n\in\mathbb{Z}_{\geq1}$ and $q$ is any nondegenerate quadratic form.  We assume $\mathcal{C}$ and $\mathcal{T}$ are of this form henceforth and denote their tensor units by $\mathbbm{1}_\mathcal{C}$ and $\mathbbm{1}_\mathcal{T}$, respectively.
\par First note that every $g\boxtimes X\in\mathcal{O}(\mathcal{C}\boxtimes\mathcal{T})$ is Galois conjugate to an element of the form $h\boxtimes\mathbbm{1}_\mathcal{T}$ for some $h\in\mathcal{O}(\mathcal{C})$ since $\mathcal{T}$ is transitive.  In the case $g=\mathbbm{1}_\mathcal{C}$, $|\mathcal{O}_{\mathbbm{1}_\mathcal{C}\boxtimes\mathbbm{1}_\mathcal{T}}|=(p-1)/2$ since $\mathbbm{1}_\mathcal{C}$ is fixed by the Galois action of $\mathcal{C}$.  Now it suffices to count the number of Galois congugacy classes of elements of the form $g\boxtimes\mathbbm{1}_\mathcal{T}$ for nontrivial $g\in\mathcal{O}(\mathcal{C})$.  Let $g\in\mathcal{O}(\mathcal{C})$ such that the order of $g$ is $p^k$ for some $k\in\mathbb{Z}_{\geq1}$.  Then $\hat{\sigma}(g\boxtimes\mathbbm{1}_\mathcal{T})=h\boxtimes\mathbbm{1}_\mathcal{T}$ if and only if $\hat{\sigma}(g)=h$ under the Galois action of $\mathcal{C}$ and $\hat{\sigma}(\mathbbm{1}_\mathcal{T})=\mathbbm{1}_\mathcal{T}$ under the Galois action of $\mathcal{T}$.  As was shown in Example \ref{point}, the Galois orbit of $g\in\mathcal{O}(\mathcal{C})$ is determined by $\sigma\in\mathrm{Gal}(\mathbb{Q}(\zeta_{p^k})/\mathbb{Q})$ and the condition $\hat{\sigma}(\mathbbm{1}_\mathcal{T})=\mathbbm{1}_\mathcal{T}$ implies $\sigma(\dim(\mathcal{T}))=\dim(\mathcal{T})$, i.e. $\sigma\in\mathrm{Gal}(\mathbb{Q}(\zeta_{p^k})/\mathbb{Q}(\zeta_p)^+)$.  Thus $g\boxtimes\mathbbm{1}_\mathcal{T}$ has exactly $|\mathrm{Gal}(\mathbb{Q}(\zeta_{p^k})/\mathbb{Q}(\zeta_p)^+)|=\phi(p^k)/(\phi(p)/2)=2p^{k-1}$ Galois conjugates of the form $h\boxtimes\mathbbm{1}_\mathcal{T}$.
\par Therefore, to each Galois orbit of an element $g\in\mathcal{O}(\mathcal{C})$ of order $p^k$, there corresponds $\phi(p^k)/(2p^{k-1})=(p-1)/2$ Galois orbits of $\mathcal{C}\boxtimes\mathcal{T}$, independent of $k$.  We may then sum over all Galois orbits of $\mathcal{C}$, ommitting $\mathcal{O}_\mathbbm{1}$, to yield
\begin{equation}
|\mathrm{Orb}(\mathcal{C}\boxtimes\mathcal{T})|=1+(|\mathrm{Orb}(\mathcal{C})|-1)\dfrac{p-1}{2}.
\end{equation}

\begin{note}[Cyclic groups]\label{cyclic}
If $p\in\mathbb{Z}_{\geq5}$ is prime and $n\in\mathbb{Z}_{\geq1}$, then
\begin{equation}
|\mathrm{Orb}(\mathcal{C}(\mathbb{Z}/p^n\mathbb{Z},q)\boxtimes\mathcal{C}(\mathfrak{sl}_2,p-2)_\mathrm{ad})|=1+\left((n+1)-1\right)\dfrac{(p-1)}{2}=\dfrac{n(p-1)+2}{2}.
\end{equation}
\end{note}

\begin{note}[Elementary abelian $p$-groups]
If $p\in\mathbb{Z}_{\geq5}$ is prime and $n\in\mathbb{Z}_{\geq1}$, then
\begin{equation}
|\mathrm{Orb}(\mathcal{C}((\mathbb{Z}/p\mathbb{Z})^{\oplus n},q)\boxtimes\mathcal{C}(\mathfrak{sl}_2,p-2)_\mathrm{ad})|=1+\left(1+\dfrac{p^n-1}{p-1}-1\right)\dfrac{(p-1)}{2}=\dfrac{p^n+1}{2}.
\end{equation}
\end{note}
\end{example}



\section{Two orbits}\label{sec:two}

The general results of Section \ref{sec:gen} can be used to classify modular tensor categories with a small number of Galois orbits of simple objects and nontrivial universal grading.  We provide a small table of examples in Figure \ref{fig:two}.  Here we reduce the classification of modular tensor categories $\mathcal{C}$ with $|\mathrm{Orb}(\mathcal{C})|=2$ to the case where $\mathcal{C}$ is simple, i.e.\ $\mathcal{C}$ does not possess any nontrivial fusion subcategories.  The culiminating results are Corollary \ref{pointedprop} which describes the case $\mathcal{C}_\mathrm{pt}$ is nontrivial, and Proposition \ref{unpointedprop} which describes the case when $\mathcal{C}_\mathrm{pt}$ is trivial.  With increasing complexity, these results can be extended to the cases when $|\mathrm{Orb}(\mathcal{C})|>2$.  

\begin{figure}[H]
\centering
\begin{equation*}
\begin{array}{|c|c|c|c|}
\hline \mathcal{C} & \mathrm{rank}(\mathcal{C}) & \mathrm{rank}(\mathcal{C}_\mathrm{pt}) &\text{Orbit sizes} \\\hline\hline
\mathcal{C}(\mathfrak{sl}_2,2) & 3 & 2 & 1,2 \\\hline
\mathcal{C}(\mathfrak{g}_2,-2/3) & 4 & 1 & 2,2 \\\hline
\mathcal{C}(\mathfrak{so}_5,3/2)_\mathrm{ad} & 6 & 1 & 3,3 \\\hline
\mathcal{C}(\mathfrak{so}_5,5/2)_\mathrm{ad} & 10 & 1 & 5,5 \\\hline
\mathcal{C}(\mathfrak{g}_2,5) & 12 & 1 & 3,9 \\\hline
\mathcal{C}(\mathfrak{g}_2,5/3) & 16 & 1 & 8,8 
\\\hline
\mathcal{C}(E_7,5)_\mathrm{ad} & 22 & 1 & 11,11
\\\hline\hline
\mathcal{C}(\mathfrak{sl}_p,1) & p & p & 1,\phi(p) \\
p\geq2\text{ prime}& & & \\\hline
\mathcal{C}(\mathfrak{sl}_2,2(p-1))_A^0 & (p+3)/2 & 1 & 2,\frac{1}{2}\phi(p) \\
p\geq5\text{ prime}& & & \\\hline
\mathcal{C}(\mathfrak{sl}_2,p^2-2)_\mathrm{ad}  & (p^2-1)/2  & 1 & \frac{1}{2}\phi(p),\frac{1}{2}\phi(p^2) \\
p\geq3\text{ prime}& & &  \\\hline
\end{array}
\end{equation*}
    \caption{Examples of modular tensor categories $\mathcal{C}$ with $|\mathrm{Orb}(\mathcal{C})|=2$}%
    \label{fig:two}%
\end{figure}


\subsection{Possible pointed subcategories}

The goal of this subsection is to prove that when a modular tensor category $\mathcal{C}$ has two Galois orbits of simple objects and nontrivial pointed subcategory, then $\mathcal{C}_\mathrm{pt}\simeq\mathrm{sVec}$ or $\mathcal{C}\simeq\mathcal{C}_\mathrm{pt}\boxtimes\mathcal{T}$ for some transitive modular tensor category $\mathcal{T}$.

\begin{lemma}\label{ad}
Let $\mathcal{C}$ be a modular tensor category.  If $|\mathrm{Orb}(\mathcal{C})|=2$ and $\mathcal{C}_\mathrm{pt}$ is nontrivial, then $\mathcal{O}_\mathbbm{1}=\mathcal{O}(\mathcal{C}_\mathrm{ad})$.
\end{lemma}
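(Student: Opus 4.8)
The plan is a short counting argument that leans entirely on the structural results already established. By Corollary \ref{precor}, $\mathcal{C}_\mathrm{ad}$ is closed under the Galois action of $\mathcal{C}$, so $\mathcal{O}(\mathcal{C}_\mathrm{ad})$ is a union of Galois orbits of simple objects. By Lemma \ref{counting} we have the inclusion $\mathcal{O}_\mathbbm{1}\subseteq\mathcal{O}(\mathcal{C}_\mathrm{ad})$. Under the hypothesis $|\mathrm{Orb}(\mathcal{C})|=2$ there is exactly one further orbit $\mathcal{O}_Y$, so being a union of orbits that contains $\mathcal{O}_\mathbbm{1}$, the set $\mathcal{O}(\mathcal{C}_\mathrm{ad})$ must equal either $\mathcal{O}_\mathbbm{1}$ or $\mathcal{O}_\mathbbm{1}\cup\mathcal{O}_Y=\mathcal{O}(\mathcal{C})$.

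To rule out the second alternative I would argue that $\mathcal{O}(\mathcal{C}_\mathrm{ad})=\mathcal{O}(\mathcal{C})$ means $\mathcal{C}_\mathrm{ad}=\mathcal{C}$, which is incompatible with $\mathcal{C}_\mathrm{pt}$ being nontrivial. Indeed, $\mathcal{C}_\mathrm{ad}$ is the trivial component of the universal grading of $\mathcal{C}$ whose grading group is $\mathcal{O}(\mathcal{C}_\mathrm{pt})$ \cite[Theorem 6.3]{nilgelaki}, so $\mathcal{C}_\mathrm{ad}=\mathcal{C}$ forces the grading to be trivial and hence $\mathcal{C}_\mathrm{pt}=\mathrm{Vec}$; equivalently one may invoke the double-centralizer dimension identity $\mathrm{FPdim}(\mathcal{C}_\mathrm{pt})\cdot\mathrm{FPdim}(C_\mathcal{C}(\mathcal{C}_\mathrm{pt}))=\mathrm{FPdim}(\mathcal{C})$ together with $\mathcal{C}_\mathrm{ad}=C_\mathcal{C}(\mathcal{C}_\mathrm{pt})$ \cite[Corollary 6.9]{nilgelaki}. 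Either way this contradicts the assumption that $\mathcal{C}_\mathrm{pt}$ is nontrivial, so $\mathcal{O}(\mathcal{C}_\mathrm{ad})=\mathcal{O}_\mathbbm{1}$, which is the assertion.

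I do not expect a genuine obstacle here: the whole argument is bookkeeping with Corollary \ref{precor} (equivalently Theorem \ref{prelem0}) and Lemma \ref{counting}. The only point requiring a little care is that one must use the fact that $\mathcal{O}(\mathcal{C}_\mathrm{ad})$ is a \emph{union} of Galois orbits — not merely Galois-invariant on the nose in some weaker sense — since that is exactly what promotes the inclusion $\mathcal{O}_\mathbbm{1}\subseteq\mathcal{O}(\mathcal{C}_\mathrm{ad})$ to an equality once the case $\mathcal{C}_\mathrm{ad}=\mathcal{C}$ has been excluded.
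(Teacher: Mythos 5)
Your argument is correct and is essentially the paper's own proof: both use that $\mathcal{C}_\mathrm{ad}=C_\mathcal{C}(\mathcal{C}_\mathrm{pt})$ is Galois-closed (Theorem \ref{prelem0}/Corollary \ref{precor}), contains $\mathcal{O}_\mathbbm{1}$, and is proper because $\mathcal{C}_\mathrm{pt}$ is nontrivial and $\mathcal{C}$ is modular, so with only two orbits it must equal $\mathcal{O}_\mathbbm{1}$. The only cosmetic difference is that you rule out $\mathcal{C}_\mathrm{ad}=\mathcal{C}$ via the universal grading, while the paper cites the strict inclusion $C_\mathcal{C}(\mathcal{C}_\mathrm{pt})\subsetneq\mathcal{C}$ directly.
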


\begin{proof}
The relative centralizer $C_\mathcal{C}(\mathcal{C}_\mathrm{pt})=\mathcal{C}_\mathrm{ad}$ must be closed under Galois conjugacy by Theorem \ref{prelem0}.  But $C_\mathcal{C}(\mathcal{C}_\mathrm{pt})\subsetneq\mathcal{C}$ since $\mathcal{C}$ is modular, thus $\mathcal{O}(C_\mathcal{C}(\mathcal{C}_\mathrm{pt}))=\mathcal{O}(\mathcal{C}_\mathrm{ad})=\mathcal{O}_\mathbbm{1}$.
\end{proof}

\begin{proposition}\label{firstwan}
Let $\mathcal{C}$ be a modular tensor category with $|\mathrm{Orb}(\mathcal{C})|=2$.  If $\mathcal{C}_\mathrm{pt}\neq\mathrm{Vec}$, then $\mathrm{FPdim}(\mathcal{C}_\mathrm{pt})=2$ and $\mathcal{C}_\mathrm{pt}$ is symmetric, or $\mathcal{C}\simeq\mathcal{C}_\mathrm{pt}\boxtimes\mathcal{T}$ where $\mathcal{T}$ is a transitive modular tensor category.
\end{proposition}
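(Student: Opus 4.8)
The plan is to split according to whether $\mathcal{C}_\mathrm{pt}$ meets the nontrivial Galois orbit. Fix a nontrivial invertible object $g\in\mathcal{O}(\mathcal{C}_\mathrm{pt})$, which exists since $\mathcal{C}_\mathrm{pt}\neq\mathrm{Vec}$. As $|\mathrm{Orb}(\mathcal{C})|=2$, either $\mathcal{O}_g=\mathcal{O}_\mathbbm{1}$ for every nontrivial invertible $g$, or some nontrivial invertible $g$ lies in the orbit distinct from $\mathcal{O}_\mathbbm{1}$. In the latter case the two Galois orbits of $\mathcal{C}$ are $\mathcal{O}_\mathbbm{1}$ and $\mathcal{O}_g$, and each contains a pseudoinvertible object ($\mathbbm{1}$ and $g$ are invertible, hence of categorical dimension $\pm1$), so Theorem \ref{what2} yields $\mathcal{C}\simeq\mathcal{C}_\mathrm{pt}\boxtimes\mathcal{T}$ with $\mathcal{T}$ transitive, the second alternative. (This forces $\mathcal{C}_\mathrm{pt}$ to be nondegenerate, but we do not need that.) So we may assume $\mathcal{O}(\mathcal{C}_\mathrm{pt})\subseteq\mathcal{O}_\mathbbm{1}$. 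Every invertible object of a spherical category has $\dim(X)^2=1$, so $\dim(\mathcal{C}_\mathrm{pt})=\mathrm{rank}(\mathcal{C}_\mathrm{pt})$ and Theorem \ref{what} applies: writing $\mathrm{rank}(\mathcal{C}_\mathrm{pt})=\prod_j p_j^{a_j}$ we get $2=|\mathrm{Orb}(\mathcal{C})|\geq1+\sum_j a_j$, whence $\mathrm{rank}(\mathcal{C}_\mathrm{pt})$ is a prime $p$. Thus $\mathcal{O}(\mathcal{C}_\mathrm{pt})\cong\mathbb{Z}/p\mathbb{Z}$, which by \cite{nilgelaki} is the universal grading group of $\mathcal{C}$, with $\mathcal{C}_\mathrm{ad}=C_\mathcal{C}(\mathcal{C}_\mathrm{pt})$ the trivial component of a grading $\mathcal{C}=\bigoplus_{a\in\mathbb{Z}/p\mathbb{Z}}\mathcal{C}_a$, and by Lemma \ref{counting} $\mathcal{C}_\mathrm{pt}$ is symmetrically braided. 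It therefore suffices to prove $p=2$, for then $\mathrm{FPdim}(\mathcal{C}_\mathrm{pt})=\mathrm{rank}(\mathcal{C}_\mathrm{pt})=2$ and $\mathcal{C}_\mathrm{pt}$ is symmetric, the first alternative.

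Ruling out odd $p$ is the step I expect to be the real obstacle. Suppose $p$ is odd. Since $g\in\mathcal{O}(\mathcal{C}_\mathrm{pt})\subseteq\mathcal{O}_\mathbbm{1}$, there is $\sigma_0\in\mathrm{Gal}(\mathbb{Q}(\zeta_N)/\mathbb{Q})$ with $\hat{\sigma}_0(\mathbbm{1})=g$, and taking $Y=\mathbbm{1}$ in (\ref{gal}) gives $\sigma_0(\dim Z)=s_{Z,g}/\dim(g)=\dim(Z)M_{g,Z}$ for all $Z\in\mathcal{O}(\mathcal{C})$, where $M_{g,Z}:=s_{g,Z}/(\dim(g)\dim(Z))$ is the monodromy scalar, a root of unity of order dividing $p$ because $M_{g^p,Z}=M_{g,Z}^p$ and $g^p=\mathbbm{1}$. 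Using multiplicativity of the double braiding with the invertible $g$, and that every object of $\mathcal{C}_\mathrm{ad}=C_\mathcal{C}(\mathcal{C}_\mathrm{pt})$ centralizes $g$, one checks that $Z\mapsto M_{g,Z}$ is constant on each graded component and defines a nontrivial (hence, $p$ being prime, faithful) character of $\mathbb{Z}/p\mathbb{Z}$, so $M_{g,Z}=\zeta_p^{k\deg(Z)}$ with $\gcd(k,p)=1$. Running the same argument for every $h\in\mathcal{O}(\mathcal{C}_\mathrm{pt})$, and combining with the orthogonality relation $\sum_{Z}s_{h,Z}\overline{s_{\mathbbm{1},Z}}=0$ for $h\neq\mathbbm{1}$ together with the fact that categorical dimensions are real ($\overline{\dim Z}=\dim(Z^\ast)=\dim Z$), forces $a\mapsto\dim(\mathcal{C}_a)$ to be constant, so $\dim(\mathcal{C}_a)=\dim(\mathcal{C})/p$ for all $a$.

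Consequently
\begin{equation*}
\sigma_0(\dim\mathcal{C})=\sum_{Z\in\mathcal{O}(\mathcal{C})}\sigma_0(\dim Z)^2=\sum_{a\in\mathbb{Z}/p\mathbb{Z}}\zeta_p^{2ka}\dim(\mathcal{C}_a)=\frac{\dim(\mathcal{C})}{p}\sum_{a\in\mathbb{Z}/p\mathbb{Z}}\zeta_p^{2ka}=0,
\end{equation*}
the last sum being a full sum of $p$-th roots of unity because $\gcd(2k,p)=1$ — this is exactly where oddness of $p$ enters, and the computation correctly fails for $p=2$. But $\sigma_0$ is a field automorphism and $\dim(\mathcal{C})\neq0$, a contradiction; hence $p=2$, finishing the proof. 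The delicate points requiring care are the equidimensionality $\dim(\mathcal{C}_a)=\dim(\mathcal{C})/p$ of the graded components and the claim that the monodromy with an invertible object is a faithful character of the universal grading group; the remainder is bookkeeping with the definition of the Galois action and the structural facts recalled in Section \ref{sec:prelim}.
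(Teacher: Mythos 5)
Your proof is correct, and its first half coincides with the paper's: both split on whether $\mathcal{O}(\mathcal{C}_\mathrm{pt})\subset\mathcal{O}_\mathbbm{1}$, invoke Theorem \ref{what2} when some invertible escapes $\mathcal{O}_\mathbbm{1}$, and otherwise use Lemma \ref{counting} for the symmetric braiding and Theorem \ref{what} to force $\mathrm{rank}(\mathcal{C}_\mathrm{pt})$ to be prime. Where you genuinely diverge is in pinning that prime down to $2$. The paper does this softly: from $\mathcal{O}(\mathcal{C}_\mathrm{pt})\subset\mathcal{O}_\mathbbm{1}$ it deduces that $\mathcal{C}_\mathrm{pt}$ is self-dual (citing \cite[Lemma 3.2]{MR3632091}; the point is that the character $Y\mapsto s_{Y,g}/\dim(g)=\sigma_0(\dim Y)$ of any $g\in\mathcal{O}_\mathbbm{1}$ is totally real, so $g^\ast\cong g$), and a self-dual pointed category is an elementary abelian $2$-group, which combined with primality gives $\mathrm{FPdim}(\mathcal{C}_\mathrm{pt})=2$ in one line. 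You instead rule out odd $p$ by an explicit computation: the monodromy with $g$ is a faithful character of the universal grading group, Verlinde orthogonality against the dimension row forces the graded components to be equidimensional, and then $\sigma_0(\dim\mathcal{C})=(\dim\mathcal{C}/p)\sum_a\zeta_p^{2ka}=0$ because $\gcd(2k,p)=1$ for odd $p$ --- a contradiction. Both of the steps you flag as delicate check out (bimultiplicativity of the monodromy with an invertible object makes it constant on universal grading components, and it is nontrivial by nondegeneracy; equidimensionality of the components of a faithful grading of a modular category is a standard fact which your orthogonality argument reproves). The paper's route is shorter and isolates the structural reason (self-duality of $\mathcal{O}_\mathbbm{1}$); yours trades that for a self-contained character-sum argument that makes visible exactly where oddness of $p$ is used.
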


\begin{proof}
If $\mathcal\mathcal{O}(\mathcal{C}_\mathrm{pt})\not\subset\mathcal{O}_\mathbbm{1}$, then $\mathcal{C}\simeq\mathcal{C}_\mathrm{pt}\boxtimes\mathcal{T}$ where $\mathcal{T}$ is a transitive modular tensor category by Theorem \ref{what2}.  Otherwise $\mathcal\mathcal{O}(\mathcal{C}_\mathrm{pt})\subset\mathcal{O}_\mathbbm{1}$ and thus $ \mathcal{C}_\mathrm{pt}$ is self-dual \cite[Lemma 3.2]{MR3632091} and symmetrically braided by Lemma \ref{counting}.  Lastly Theorem \ref{what} implies $\mathrm{FPdim}(\mathcal{C})$ is prime.  But a pointed self-dual fusion category must be an elementary abelian $2$-group, hence $\mathrm{FPdim}(\mathcal{C}_\mathrm{pt})=2$.
\end{proof}

To finish this subsection by eliminating the possibility that $|\mathrm{Gal}(\mathcal{C})|=2$ and $\mathcal{C}_\mathrm{pt}$ is Tannakian, we will need the standard constructions of connected \'etale algebras in braided fusion categories, and their categories of local modules.  We refer the reader to \cite[Section 3]{DMNO} where comprehensive detail can be found.

\begin{lemma}\label{venti}
Let $\mathcal{C}$ be a braided tensor category and $A$ be the regular algebra of a Tannakian fusion subcategory.  If $\dim(X)$ is an algebraic unit, then the free $A$-module $A\otimes X$ is simple.
\end{lemma}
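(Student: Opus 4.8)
The plan is to identify $A\otimes X$ with the image of $X$ under the de-equivariantization functor and then run a one-line divisibility argument on categorical dimensions. Write the Tannakian subcategory as $\mathcal{E}\simeq\mathrm{Rep}(G)$ for a finite group $G$, so that $A$ is its regular algebra; the category $\mathcal{C}_A$ of $A$-modules in $\mathcal{C}$ is then a fusion category equipped with a $G$-action (by tensor autoequivalences respecting its canonical spherical structure) for which $(\mathcal{C}_A)^G\simeq\mathcal{C}$, and under this equivalence the free-module functor $F:=A\otimes(-)\colon\mathcal{C}\to\mathcal{C}_A$ is the composite of the equivalence $\mathcal{C}\simeq(\mathcal{C}_A)^G$ with the forgetful functor $(\mathcal{C}_A)^G\to\mathcal{C}_A$. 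In particular $F$ is a tensor functor compatible with spherical structures, so $\dim(F(X))=\dim(X)$.

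The next step is Clifford theory for equivariantizations: since $X$ is simple, $A\otimes X=F(X)$ is the underlying object of a simple object of $(\mathcal{C}_A)^G$, and therefore decomposes as $A\otimes X\simeq m\bigoplus_{i=1}^{k}M_i$, where $M_1,\dots,M_k$ form a single $G$-orbit of pairwise non-isomorphic simple objects of $\mathcal{C}_A$ and $m$ is a positive integer. Because the $G$-action permutes the $M_i$ transitively and preserves dimensions, they share a common categorical dimension $\delta$, which—like every categorical dimension in a fusion category—is an algebraic integer. Hence $\dim(X)=\dim(F(X))=mk\,\delta$.

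Now the hypothesis finishes it: if $\dim(X)$ is an algebraic unit then $(mk)^{-1}=\delta\cdot\dim(X)^{-1}$ is an algebraic integer, and being rational it must lie in $\mathbb{Z}$, so $mk=1$; thus $m=k=1$ and $A\otimes X=M_1$ is simple. As a cross-check, one can instead compute $\dim_\mathbb{C}\mathrm{End}_A(A\otimes X)=\dim_\mathbb{C}\mathrm{Hom}_\mathcal{C}(X,A\otimes X)$ via the free--forgetful adjunction and see that it equals $\dim_\mathbb{C}\big((X\otimes X^\ast)_\mathcal{E}\big)$, the dimension of the largest subobject of $X\otimes X^\ast$ lying in $\mathcal{E}$; the argument above shows this subobject is forced to be just $\mathbbm{1}$.

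The genuinely routine parts are this final number-theoretic step and the adjunction identity. The point needing the most care is the structural input on $F(X)$: that de-equivariantization preserves \emph{categorical} dimensions—this is essential, since it is $\dim(X)$ and not $\mathrm{FPdim}(X)$ that is assumed to be a unit, and the Frobenius--Perron version of the computation only yields the weak bound $mk\le\mathrm{FPdim}(X)$—together with the fact that the forgetful functor from an equivariantization carries a simple object to a single $G$-orbit with constant multiplicity. Both statements are standard, and I would cite them precisely from the literature on equivariantization and on étale algebras in braided fusion categories.
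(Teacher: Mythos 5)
Your proof is correct, and it takes a genuinely different route from the paper's. The paper argues entirely inside $\mathcal{C}$: it takes a simple $A$-submodule $V\subseteq A\otimes X$, observes that every such submodule has underlying object $\delta\otimes X$ with $\delta$ a direct sum of invertible summands of $A$, so that $\dim(V)=m\dim(X)$ for an integer $1\leq m\leq\dim(A)$, and then applies the same rescaling formula you use, $\dim_A(V)=\dim(V)/\dim(A)$, so that the unit hypothesis forces $m/\dim(A)$ to be a positive rational algebraic integer at most $1$, whence $m=\dim(A)$ and $V=A\otimes X$. You instead pass to the equivariantization picture $\mathcal{C}\simeq(\mathcal{C}_A)^{G}$ and invoke Clifford theory to decompose $A\otimes X$ into a $G$-orbit of simples with constant multiplicity. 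The two arguments share their engine --- the dimension rescaling under the free-module functor together with the rational-algebraic-integer trick --- but differ in how they control the decomposition of $A\otimes X$, and this difference matters: the paper's step ``every submodule is $\delta\otimes X$ with $\delta$ a sum of invertibles'' really uses that $A$ is pointed, which holds where the lemma is applied (in Lemma \ref{medi} the Tannakian subcategory is $\mathcal{C}_{\mathrm{pt}}$) but not for an arbitrary Tannakian subcategory $\mathrm{Rep}(G)$ with $G$ nonabelian. Your Clifford-theoretic decomposition covers the nonabelian case and so proves the lemma exactly as stated, at the cost of heavier structural citations. One simplification worth making: the $G$-action on $\mathcal{C}_A$ preserves $\dim_A$ simply because it twists the module structure without changing the underlying object of $\mathcal{C}$, so you do not need to worry about compatibility of the autoequivalences with the spherical structure.
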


\begin{proof}
Let $V$ be a simple $A$-submodule of the free $A$-module $A\otimes X$.  The only possible subobjects of $A\otimes X$ are of the form $V=\delta\otimes X$ where $\delta$ is a sum of invertible simple objects, hence $\dim(V)=m\dim(X)$ where $1\leq m\leq\dim(A)$.  We compute $\dim_A(V)$, the dimension of $V\in\mathcal{O}(\mathcal{C}_A)$, using \cite[Theorem 1.18]{KiO}:
\begin{equation}\label{veinte}
\dim_A(V)=\dim(V)/\dim(A)=(m/\dim(A))\dim(X).
\end{equation}
But $\dim(X)$ is an algebraic unit by assumption, thus $m=\dim(A)$ and moreover $A\otimes X$ is simple.
\end{proof}

\begin{lemma}\label{medi}
Let $\mathcal{C}$ be a modular tensor category.  If $\mathcal{O}_\mathbbm{1}=\mathcal{O}(\mathcal{C}_\mathrm{ad})$ and $\mathcal{C}_\mathrm{pt}$ is Tannakian, then $\mathcal{C}_A^0$ is a transitive modular tensor category where $A\in\mathcal{C}_\mathrm{pt}$ is the regular algebra of $\mathcal{C}_\mathrm{pt}$.
\end{lemma}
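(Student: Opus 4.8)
The plan is to realise $\mathcal{C}_A^0$ as a condensation and then compare characters of Grothendieck rings. Since $\mathcal{C}_\mathrm{pt}$ is Tannakian, say $\mathcal{C}_\mathrm{pt}\simeq\mathrm{Rep}(G)$, its regular algebra $A\cong\bigoplus_{g\in\mathcal{O}(\mathcal{C}_\mathrm{pt})}g$ is a connected \'etale algebra, so $\mathcal{C}_A^0$ is a modular tensor category by the standard theory of local modules (\cite[Theorem 4.5]{KiO}, \cite[Corollary 3.30]{DMNO}); moreover $\mathcal{C}_\mathrm{pt}$ is symmetric, hence $\mathcal{C}_\mathrm{pt}\subset C_\mathcal{C}(\mathcal{C}_\mathrm{pt})=\mathcal{C}_\mathrm{ad}$ by \cite[Corollary 6.9]{nilgelaki}. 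As $\mathcal{C}_\mathrm{ad}=C_\mathcal{C}(\mathcal{C}_\mathrm{pt})$ centralizes $A$, the free-module functor restricts to a monoidal functor $F\colon\mathcal{C}_\mathrm{ad}\to\mathcal{C}_A^0$, $X\mapsto A\otimes X$; and since every simple object of $\mathcal{C}_A^0$ has underlying object in $C_\mathcal{C}(\mathcal{C}_\mathrm{pt})=\mathcal{C}_\mathrm{ad}$, hence is a quotient of some $A\otimes X$ with $X\in\mathcal{O}(\mathcal{C}_\mathrm{ad})$, the induced ring homomorphism $[F]\colon K(\mathcal{C}_\mathrm{ad})\to K(\mathcal{C}_A^0)$ is surjective. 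Its kernel contains the ideal $I$ generated by the elements $[g]-[\mathbbm{1}]$, $g\in\mathcal{O}(\mathcal{C}_\mathrm{pt})$, because $A\otimes g\cong A$ as $A$-modules. Finally I record, exactly as in the proof of Lemma \ref{venti} via \cite[Theorem 1.18]{KiO}, the dimension identity $\dim_{\mathcal{C}_A^0}(A\otimes Z)=\dim_\mathcal{C}(Z)$ for $Z\in\mathcal{O}(\mathcal{C}_\mathrm{ad})$.

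Write $\varphi_Y\colon K(\mathcal{C})\to\mathbb{C}$ for the character $Z\mapsto s_{Z,Y}/\dim(Y)$ attached to $Y\in\mathcal{O}(\mathcal{C})$, and $\varphi^0_N$ for the corresponding characters of $K(\mathcal{C}_A^0)$. Because $K(\mathcal{C}_\mathrm{ad})\subset K(\mathcal{C})$ is a module-finite extension of commutative rings, every character of $K(\mathcal{C}_\mathrm{ad})$ is a restriction $\varphi_Y|_{K(\mathcal{C}_\mathrm{ad})}$ for some $Y\in\mathcal{O}(\mathcal{C})$; and by the centralizer criterion \cite[Proposition 8.20.5(i)]{tcat}, $\varphi_Y|_{K(\mathcal{C}_\mathrm{ad})}$ annihilates $I$ if and only if $Y$ centralizes every $g\in\mathcal{O}(\mathcal{C}_\mathrm{pt})$, i.e.\ $Y\in\mathcal{O}(\mathcal{C}_\mathrm{ad})$. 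Let $S_0$ be the set of characters of $K(\mathcal{C}_\mathrm{ad})$ of the form $\psi\circ[F]$ with $\psi$ a character of $K(\mathcal{C}_A^0)$ (so $N\mapsto\varphi^0_N\mapsto\varphi^0_N\circ[F]$ is a bijection $\mathcal{O}(\mathcal{C}_A^0)\to S_0$, since $[F]$ is surjective), and let $S_1=\{\varphi_Y|_{K(\mathcal{C}_\mathrm{ad})}:Y\in\mathcal{O}(\mathcal{C}_\mathrm{ad})\}$ be the set of characters of $K(\mathcal{C}_\mathrm{ad})$ annihilating $I$. Then $S_0\subset S_1$ because $I\subset\ker[F]$, and the dimension identity shows that the trivial character $\varphi^0_\mathbbm{1}$ of $K(\mathcal{C}_A^0)$ pulls back along $[F]$ to $\varphi_\mathbbm{1}|_{K(\mathcal{C}_\mathrm{ad})}$, so in particular $\varphi_\mathbbm{1}|_{K(\mathcal{C}_\mathrm{ad})}\in S_0$.

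Now apply the Galois action. By the defining relation (\ref{gal}) one has $\sigma\circ\varphi_\mathbbm{1}=\varphi_{\hat\sigma(\mathbbm{1})}$, so the Galois orbit of $\varphi_\mathbbm{1}|_{K(\mathcal{C}_\mathrm{ad})}$ is $\{\varphi_Y|_{K(\mathcal{C}_\mathrm{ad})}:Y\in\mathcal{O}_\mathbbm{1}\}$, which by the hypothesis $\mathcal{O}_\mathbbm{1}=\mathcal{O}(\mathcal{C}_\mathrm{ad})$ equals $S_1$. On the other hand, the set $S_0$ is Galois-stable (for a character $\psi$ of the fixed ring $K(\mathcal{C}_A^0)$ and $\sigma$ a Galois automorphism, $\sigma\circ\psi$ is again a character of $K(\mathcal{C}_A^0)$, and restriction along $[F]$ is Galois-equivariant), and $\varphi_\mathbbm{1}|_{K(\mathcal{C}_\mathrm{ad})}\in S_0$, so its Galois orbit is contained in $S_0$. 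Hence $S_1\subset S_0$, so $S_0=S_1$ and this common set is a single Galois orbit; transporting back through the bijection $N\mapsto\varphi^0_N\circ[F]$, the set $\mathcal{O}(\mathcal{C}_A^0)$ is a single Galois orbit, i.e.\ $\mathcal{C}_A^0$ is transitive. (The conductors $N$ of $\mathcal{C}$ and $N'$ of $\mathcal{C}_A^0$ need not coincide, but $N'\mid N$ since the twists of local $A$-modules are among the twists of $\mathcal{C}$, the restriction $\mathrm{Gal}(\mathbb{Q}(\zeta_N)/\mathbb{Q})\to\mathrm{Gal}(\mathbb{Q}(\zeta_{N'})/\mathbb{Q})$ is surjective, and all character values appearing lie in $\mathbb{Q}(\zeta_{N'})$, so the two Galois actions are compatible.) The step requiring the most care is the first paragraph: verifying that $F$ really lands in $\mathcal{C}_A^0$, that $[F]$ is a surjective ring homomorphism with $I\subset\ker[F]$, and that $\mathcal{C}_A^0$ is modular with the stated dimension identity — all of which I would take from the standard theory of connected \'etale algebras and their categories of local modules. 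The Galois-theoretic core is then the short double inclusion $S_0\subset S_1\subset S_0$.
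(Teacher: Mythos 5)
Your route is genuinely different from the paper's (which shows every simple local module is free, deduces that every Galois orbit of $\mathcal{C}_A^0$ contains a pseudoinvertible object, and then applies Theorem \ref{what2} together with a pseudounitarity transfer), but it contains one genuine gap, and it sits exactly at the step you dismiss as standard: the surjectivity of $[F]\colon K(\mathcal{C}_\mathrm{ad})\to K(\mathcal{C}_A^0)$. Knowing that every simple local module $V$ is a quotient of some free module $A\otimes X$ only tells you that $[V]$ occurs as a summand of $[F]([X])$; it does not place $[V]$ in the image of $[F]$, not even after tensoring with $\mathbb{C}$. Concretely, take $\mathcal{C}=\mathcal{C}(\mathfrak{sl}_2,4)$ and $A=\mathbbm{1}\oplus X_4$ the regular algebra of its Tannakian pointed subcategory: then $\mathcal{O}(\mathcal{C}_\mathrm{ad})=\{\mathbbm{1},X_2,X_4\}$, the category $\mathcal{C}_A^0$ is pointed of rank $3$, and $A\otimes X_2$ splits as $V_+\oplus V_-$, so the image of $[F]$ is spanned by $[A]$ and $[V_+]+[V_-]$, and the two distinct characters $\varphi^0_{V_\pm}$ pull back to the \emph{same} character of $K(\mathcal{C}_\mathrm{ad})$. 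The hypothesis $\mathcal{O}_\mathbbm{1}=\mathcal{O}(\mathcal{C}_\mathrm{ad})$ of course fails there, but the example shows that surjectivity is precisely where that hypothesis must enter a second time; as written you use it only once, to identify the Galois orbit of $\varphi_\mathbbm{1}|_{K(\mathcal{C}_\mathrm{ad})}$ with $S_1$. Without injectivity of $N\mapsto\varphi^0_N\circ[F]$, the equality $S_0=S_1$ and the fact that $S_0$ is a single Galois orbit do not transport back to transitivity of $\mathcal{C}_A^0$.

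The gap is repairable with the tool you already cite but do not deploy. By hypothesis every $X\in\mathcal{O}(\mathcal{C}_\mathrm{ad})=\mathcal{O}_\mathbbm{1}$ satisfies $\dim(X)^2=\dim(\mathcal{C})/\sigma(\dim(\mathcal{C}))$ for some $\sigma$, which is an algebraic unit, so Lemma \ref{venti} makes every $A\otimes X$ with $X\in\mathcal{O}(\mathcal{C}_\mathrm{ad})$ simple; hence every simple local module is free, every basis element of $K(\mathcal{C}_A^0)$ lies in the image of $[F]$, and $[F]$ really is surjective. This is exactly the paper's opening move; once it is in place, your character-theoretic double inclusion $S_0\subset S_1\subset S_0$ goes through and yields a clean alternative to the paper's appeal to Theorem \ref{what2} and the pseudounitarity transfer via \cite[Lemma 3.11]{DMNO}. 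The remaining steps --- the Galois equivariance of restriction along $[F]$, the identification of the characters of $K(\mathcal{C}_\mathrm{ad})$ killing $I$ with $\{\varphi_Y|_{K(\mathcal{C}_\mathrm{ad})}:Y\in\mathcal{O}(\mathcal{C}_\mathrm{ad})\}$, and the compatibility of conductors --- are all correct.
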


\begin{proof}
Lemma \ref{venti} implies that every $V\in\mathcal{O}(\mathcal{C}_A^0)$ is free.  Hence every $V\in\mathcal{O}(\mathcal{C}_A^0)$ has squared dimension of the form $\dim(\mathcal{C})/\sigma(\dim(\mathcal{C}))=\dim(\mathcal{C}_A^0)/\sigma(\dim(\mathcal{C}_A^0))$ for some $\sigma\in\mathrm{Gal}(\overline{\mathbb{Q}}/\mathbb{Q})$.  Therefore each Galois orbit of $\mathcal{C}_A^0$ contains a pseudoinvertible object, and Theorem \ref{what2} implies $\mathcal{C}_A^0\simeq(\mathcal{C}_A^0)_\mathrm{pt}\boxtimes\mathcal{T}$ for some transitive modular tensor category $\mathcal{T}$.  Hence $\mathcal{C}_A^0$ is Galois conjugate to a pseudounitary modular tensor category and thus $\mathcal{C}$ is Galois conjugate to a pseudounitary modular tensor category by \cite[Lemma 5.3(c)]{MR3997136} and \cite[Lemma 3.11]{DMNO}.  Lastly, we may now assume $X\in\mathcal{O}(\mathcal{C})$ has $\dim(X)^2=1$ if and only if $X\in\mathcal{O}(\mathcal{C}_\mathrm{pt})$.  Therefore $V\in\mathcal{O}(\mathcal{C}_A^0)$ has $\dim(V)^2=1$ if and only if $V=A$.  So $\mathcal{C}_A^0$ has no nontrivial invertible objects, and is moreover transitive.
\end{proof}

\begin{proposition}\label{equi}
Let $\mathcal{C}$ be a modular tensor category.  If $\mathcal{O}_\mathbbm{1}=\mathcal{O}(\mathcal{C}_\mathrm{ad})$ and $\mathcal{C}_\mathrm{pt}$ is Tannakian, then $\mathcal{C}$ is transitive.  
\end{proposition}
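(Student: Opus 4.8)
The plan is to show the hypotheses force $\mathcal{C}_\mathrm{pt}$ to be trivial; since $\mathcal{O}_\mathbbm{1}=\mathcal{O}(\mathcal{C}_\mathrm{ad})$, this is equivalent to $\mathcal{C}_\mathrm{ad}=\mathcal{C}$, hence to $\mathcal{C}$ being transitive. As $\mathcal{C}_\mathrm{pt}$ is pointed and Tannakian, $\mathcal{C}_\mathrm{pt}\simeq\mathrm{Rep}(G)$ for a finite \emph{abelian} group $G$, and I would assume $G$ is nontrivial and produce a contradiction. By the proof of Lemma \ref{medi} one may assume $\mathcal{C}$ is pseudounitary (this and the two hypotheses are preserved under Galois conjugacy), so that $\dim(X)=\mathrm{FPdim}(X)$ for every simple $X$.

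First I would upgrade Lemma \ref{medi} to a Deligne factorization of $\mathcal{C}$. Being Tannakian, $\mathcal{C}_\mathrm{pt}$ is symmetric, so $\mathcal{C}_\mathrm{pt}\subseteq C_\mathcal{C}(\mathcal{C}_\mathrm{pt})=\mathcal{C}_\mathrm{ad}$, and with the double-centralizer property $(\mathcal{C}_\mathrm{ad})'=\mathcal{C}_\mathrm{pt}\cap\mathcal{C}_\mathrm{ad}=\mathcal{C}_\mathrm{pt}$, so the symmetric center of the braided fusion category $\mathcal{C}_\mathrm{ad}$ is exactly the Tannakian $\mathrm{Rep}(G)$. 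The proof of Lemma \ref{medi} shows every simple of $\mathcal{C}_A^0$ is a free module, hence induced from $\mathcal{C}_\mathrm{ad}$, so $\mathcal{C}_A^0$ is the modularization $(\mathcal{C}_\mathrm{ad})_A=:\mathcal{T}$, a transitive modular tensor category, and $\mathcal{C}_\mathrm{ad}\simeq\mathcal{T}^G$ for the residual $G$-action on $\mathcal{T}$. Because $\mathcal{T}$ is transitive it has trivial universal grading, and any braided autoequivalence of a modular tensor category commutes with the Galois action and fixes $\mathbbm{1}$, hence fixes every object of $\mathcal{O}(\mathcal{T})=\mathcal{O}_{\mathbbm{1}_\mathcal{T}}$; thus $\mathcal{T}$ has no nontrivial braided autoequivalences, the $G$-action on $\mathcal{T}$ is trivial, and $\mathcal{C}_\mathrm{ad}\simeq\mathcal{T}\boxtimes\mathrm{Rep}(G)$. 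The nondegenerate subcategory $\mathcal{T}\boxtimes\mathrm{Vec}\subseteq\mathcal{C}_\mathrm{ad}\subseteq\mathcal{C}$ then splits off by \cite[Proposition 2.2]{DMNO}: $\mathcal{C}\simeq\mathcal{T}\boxtimes\mathcal{M}$ with $\mathcal{M}:=C_\mathcal{C}(\mathcal{T})$. Counting dimensions, $\dim(\mathcal{C})=|G|\dim(\mathcal{C}_\mathrm{ad})=|G|^2\dim(\mathcal{T})$, so $\dim(\mathcal{M})=|G|^2$; moreover $\mathcal{M}_\mathrm{pt}=\mathcal{C}_\mathrm{pt}=\mathrm{Rep}(G)$ and $\dim(\mathcal{M}_\mathrm{ad})=|G|=\dim(\mathrm{Rep}(G))$, forcing $\mathcal{M}_\mathrm{ad}=\mathcal{M}_\mathrm{pt}=\mathrm{Rep}(G)$.

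Next I would observe that $\mathcal{M}$ is integral: de-equivariantizing $\mathcal{M}$ by the regular algebra of the Tannakian $\mathcal{M}_\mathrm{pt}=\mathrm{Rep}(G)$ gives a braided $G$-crossed fusion category of total dimension $|G|$ whose trivial component has dimension $1$, hence is $\mathrm{Vec}$; so $\mathcal{M}$ is the $G$-equivariantization of a pointed category and all its simple objects have integer Frobenius--Perron dimension. In the pseudounitary normalization this gives $\dim(X)\in\mathbb{Z}_{>0}$ for every simple $X$ of $\mathcal{M}$, so the field of Verlinde eigenvalues $\mathbb{L}_{\mathbbm{1}_\mathcal{M}}=\mathbb{Q}(\dim(X):X\in\mathcal{O}(\mathcal{M}))$ is $\mathbb{Q}$, and Lemma \ref{lem1} yields $|\mathcal{O}_{\mathbbm{1}_\mathcal{M}}|=[\mathbb{L}_{\mathbbm{1}_\mathcal{M}}:\mathbb{Q}]=1$. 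On the other hand, applying $\hat\sigma(Y\boxtimes Z)=\hat\sigma(Y)\boxtimes\hat\sigma(Z)$ to the hypothesis $\mathcal{O}_\mathbbm{1}=\mathcal{O}(\mathcal{C}_\mathrm{ad})=\mathcal{O}(\mathcal{T})\times\mathcal{O}(\mathcal{M}_\mathrm{ad})$ and projecting onto the second factor gives $\mathcal{O}_{\mathbbm{1}_\mathcal{M}}=\mathcal{O}(\mathcal{M}_\mathrm{ad})\supseteq\mathcal{O}(\mathrm{Rep}(G))$, a set with $|G|\geq 2$ elements. This contradiction shows $G$ is trivial, so $\mathcal{C}_\mathrm{pt}=\mathrm{Vec}$, $\mathcal{C}=\mathcal{C}_\mathrm{ad}$, and $\mathcal{C}$ is transitive.

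The step I expect to be the main obstacle is the first one, namely upgrading Lemma \ref{medi} to the honest splitting $\mathcal{C}\simeq\mathcal{T}\boxtimes\mathcal{M}$: this rests on the claim that the residual $G$-action on the transitive modularization $\mathcal{T}$ is trivial, i.e.\ that a transitive modular tensor category admits no nontrivial braided autoequivalences. I would either verify this carefully (via commutation with the Galois action together with triviality of $\mathrm{Aut}_\otimes(\mathrm{Id}_\mathcal{T})=\widehat{U(\mathcal{T})}$ for transitive $\mathcal{T}$), or circumvent it by arguing directly with the Frobenius--Perron dimensions of the simple objects of $\mathcal{C}_\mathrm{ad}$.
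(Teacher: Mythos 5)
Your architecture tracks the paper's up to the splitting $\mathcal{C}\simeq\mathcal{T}\boxtimes C_\mathcal{C}(\mathcal{T})$, and from that point on your endgame is a genuinely different and arguably cleaner route: instead of the paper's analysis of $|\mathcal{O}_\mathbbm{1}|\in\{\mathrm{rank}(\mathcal{T}),2\,\mathrm{rank}(\mathcal{T})\}$, dimensions in $\mathbb{Q}(\sqrt{2})$, and the Ising classification, you note that $\mathcal{M}_\mathrm{pt}=\mathcal{M}_\mathrm{ad}=\mathrm{Rep}(G)$ is a Lagrangian Tannakian subcategory of $\mathcal{M}=C_\mathcal{C}(\mathcal{T})$, deduce integrality of $\mathcal{M}$, and contradict $|\mathcal{O}_{\mathbbm{1}_\mathcal{M}}|=1$ against $\mathcal{O}(\mathcal{M}_\mathrm{ad})\subseteq\mathcal{O}_{\mathbbm{1}_\mathcal{M}}$. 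That portion is correct (the projection of $\mathcal{O}_\mathbbm{1}$ onto the $\mathcal{M}$-factor lands in $\mathcal{O}_{\mathbbm{1}_\mathcal{M}}$ and must cover $\mathcal{O}(\mathcal{M}_\mathrm{ad})$, forcing $|G|=1$), and it uses the Tannakian hypothesis more directly than the paper does.

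The genuine gap is exactly the step you flagged, and the argument you give does not close it. You show that any braided autoequivalence $F$ of $\mathcal{T}$ commutes with the Galois action and fixes $\mathbbm{1}$, hence fixes every isomorphism class of simple objects; but this does not imply $F\cong\mathrm{Id}_\mathcal{T}$ as a braided monoidal functor. The kernel of $\mathrm{Aut}_\otimes^\mathrm{br}(\mathcal{T})\to\mathrm{Sym}(\mathcal{O}(\mathcal{T}))$ consists of ``soft'' autoequivalences whose underlying functor fixes all objects but whose tensor structure is a nontrivial twist of the identity, and such autoequivalences exist for fusion categories in general; nothing in your argument rules them out here. Your proposed repair via $\mathrm{Aut}_\otimes(\mathrm{Id}_\mathcal{T})=\widehat{U(\mathcal{T})}=1$ concerns a different group --- the monoidal natural automorphisms of the identity \emph{functor} --- which controls only the secondary obstruction to trivializing the categorical action (a class in $H^2(G,\mathrm{Aut}_\otimes(\mathrm{Id}_\mathcal{T}))$) once the homomorphism $G\to\mathrm{Aut}_\otimes^\mathrm{br}(\mathcal{T})$ is already known to be trivial. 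The paper closes the primary gap with substantive external input: $\mathrm{Aut}_\otimes^\mathrm{br}(\mathcal{T})\cong\mathrm{Pic}(\mathcal{T})$ \cite{MR2677836}, multiplicativity of $\mathrm{Pic}$ over the coprime factors $\mathcal{C}(\mathfrak{sl}_2,p_j-2)_\mathrm{ad}$, and the triviality of each $\mathrm{Pic}(\mathcal{C}(\mathfrak{sl}_2,p_j-2)_\mathrm{ad})$ from \cite{MR3808052}. Without an input of this strength, the identification $\mathcal{C}_\mathrm{ad}\simeq\mathcal{T}\boxtimes\mathrm{Rep}(G)$, and hence the splitting $\mathcal{C}\simeq\mathcal{T}\boxtimes\mathcal{M}$ on which your entire endgame rests, is unsupported.
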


\begin{proof}
Assume $\mathcal{C}_\mathrm{pt}$ is Tannakian with regular algebra $A$.  Then Lemma \ref{medi} implies $\mathcal{C}_A^0$ is transitive.  By \cite[Theorem II]{2020arXiv200701366N}, each transitive modular tensor category has a unique (up to ordering) nontrivial factorization
\begin{equation}
\mathcal{T}\simeq\boxtimes_{j\in J}\mathcal{C}(\mathfrak{sl}_2,p_j-2)_\mathrm{ad}
\end{equation}
where $J$ is some finite index set of distinct primes greater than 3.  Note that de-equivariantization commutes with taking centralizers \cite[Proposition 4.30]{DGNO}.  In our case this means $\mathcal{T}\simeq(\mathcal{C}_A)_\mathrm{ad}\simeq(\mathcal{C}_\mathrm{ad})_A$, so there exists a canonical equivalence $\mathcal{T}^G\simeq\mathcal{C}_\mathrm{ad}$. This equivariantization comes from a braided action of $G:=\mathcal{O}(\mathcal{C}_\mathrm{pt})$ on $\mathcal{T}$ \cite[Theorem 8.23.3]{tcat}; let $\rho:G\to\mathrm{Aut}_\otimes^\mathrm{br}(\mathcal{T})$ be the corresponding group homomorphism.  Theorem 5.2 of \cite{MR2677836} states that $\mathrm{Aut}_\otimes^\mathrm{br}(\mathcal{T})\cong\mathrm{Pic}(\mathcal{T})$ where $\mathrm{Pic}(\mathcal{T})$ is the Picard group of equivalence classes of invertible $\mathcal{T}$-module categories.  The categories $\mathcal{C}(\mathfrak{sl}_2,p_j-2)_\mathrm{ad}$ are pairwise coprime for $j\in J$, so by \cite[Proposition 4.10]{MR2677836},
\begin{equation}
\mathrm{Pic}(\mathcal{T})\cong\times_{j\in J}\mathrm{Pic}(\mathcal{C}(\mathfrak{sl}_2,p_j-2)_\mathrm{ad}).
\end{equation}
But these Picard groups are trivial for all $j\in J$ by \cite[Theorem 1.2]{MR3808052}, so the homomorphism $\rho$ is trivial.  Moreover $\mathcal{C}_\mathrm{ad}\simeq\mathcal{T}^G\simeq\mathcal{T}\boxtimes\mathrm{Rep}(G)$ as braided fusion categories.  Therefore $\mathcal{C}\simeq\mathcal{T}\boxtimes C_\mathcal{C}(\mathcal{T})$ with $C_\mathcal{C}(\mathcal{T})$ modular \cite[Proposition 4.1]{mug1} and weakly integral, and $\mathrm{FPdim}(C_\mathcal{C}(\mathcal{T}))=\mathrm{FPdim}(\mathcal{C}_\mathrm{pt})^2$.

\par Lastly we will prove $\mathcal{C}_\mathrm{pt}$ is trivial, hence $\mathcal{C}$ is transitive.  To this end, note that $\mathcal{O}(\mathcal{C}_\mathrm{pt})\subset\mathcal{O}(\mathcal{C}_\mathrm{ad})=\mathcal{O}_\mathbbm{1}$, which is self-dual.  Hence $\mathcal{O}(\mathcal{C}_\mathrm{pt})$ is an elementary abelian 2-group.  This implies $\mathrm{FPdim}(C_\mathcal{C}(\mathcal{T}))=2^n$ for some $n\in\mathbb{Z}_{\geq0}$ and therefore $\dim(X)\in\mathbb{Q}(\sqrt{2})$ for all $X\in C_\mathcal{C}(\mathcal{T})$.  By Lemma \ref{lem1}, $|\mathcal{O}_\mathbbm{1}|$ is either $\mathrm{rank}(\mathcal{T})$ (when $C_\mathcal{C}(\mathcal{T})$ is integral) or $2\cdot\mathrm{rank}(\mathcal{T})$ (when there exists $X\in\mathcal{O}(C_\mathcal{C}(\mathcal{T}))$ with $\mathbb{Q}(\dim(X))=\mathbb{Q}(\sqrt{2}))$.  In the former case, $C_\mathcal{C}(\mathcal{T})_\mathrm{ad}$ is trivial, so $C_\mathcal{C}(\mathcal{T})$ is pointed.  But this implies $\mathcal{C}_\mathrm{pt}$ is not Tannakian unless $\mathcal{C}_\mathrm{pt}$ is trivial.  The latter case implies $C_\mathcal{C}(\mathcal{T})_\mathrm{ad}$ is rank 2, which then must be pointed.  Moreover, the Ising categories are the only modular tensor categories with the above properties, none of which contain a non-trivial Tannakian subcategory, so this cannot occur.
\end{proof}


\subsection{The case of $\mathrm{sVec}$}\label{svec}

\begin{lemma}\label{porque}
Let $\mathcal{C}$ be a modular tensor category with $|\mathrm{Orb}(\mathcal{C})|=2$ and conductor $N\in\mathbb{Z}_{\geq1}$.  If $16\mid N$, then $N=16m$ for an odd square-free $m\in\mathbb{Z}_{\geq3}$.  Moreover for some multiplicity $n\in\mathbb{Z}_{\geq1}$, $\rho_\mathcal{C}\cong n(\psi_e\otimes\psi_o)$, where $\psi_o$ is an irreducible transitive representation of $\mathrm{SL}(2,\mathbbm{Z}/m\mathbb{Z})$, and $\psi_e$ is a 3-dimensional irreducible representation of $\mathrm{SL}(2,\mathbbm{Z}/16\mathbb{Z})$.
\end{lemma}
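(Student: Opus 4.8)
The plan is to work entirely with the finite-group representation $\rho_\mathcal{C}$ of $\mathrm{SL}(2,\mathbb{Z}/N\mathbb{Z})$ and the tables of $\mathfrak{t}$-spectra in Appendix \ref{tables}. Set $\lambda := v_2(N) \ge 4$ and factor $N = 2^\lambda m'$ with $m'$ odd. Since $\mathrm{SL}(2,\mathbb{Z}/N\mathbb{Z}) \cong \mathrm{SL}(2,\mathbb{Z}/2^\lambda\mathbb{Z}) \times \mathrm{SL}(2,\mathbb{Z}/m'\mathbb{Z})$ and irreducible representations of a direct product are outer tensor products, $\rho_\mathcal{C}$ decomposes as $\rho_\mathcal{C} \cong \bigoplus_j n_j\,(\alpha_j \otimes \beta_j)$ with $\alpha_j$ an irreducible representation of $\mathrm{SL}(2,\mathbb{Z}/2^\lambda\mathbb{Z})$ and $\beta_j$ one of $\mathrm{SL}(2,\mathbb{Z}/m'\mathbb{Z})$; here I use that, up to the $3$-local discrepancy between the orders of $t$ and $\tilde t$, $\rho_\mathcal{C}$ really is a representation of $\mathrm{SL}(2,\mathbb{Z}/N\mathbb{Z})$ \cite[Theorem 6.8]{MR2725181}. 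The $\mathfrak{t}$-eigenvalues of $\rho_\mathcal{C}$ (with multiplicity) are the normalized twists $\tilde t_X$, $X \in \mathcal{O}(\mathcal{C})$, and those of $\alpha_j \otimes \beta_j$ are the products of the $\mathfrak{t}$-eigenvalues of $\alpha_j$ with those of $\beta_j$.

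Next I would translate $|\mathrm{Orb}(\mathcal{C})|=2$ into a constraint on this $\mathfrak{t}$-spectrum. By the identity $\sigma^2(\tilde t_X) = \tilde t_{\hat\sigma(X)}$ of Section \ref{secgal}, for any Galois orbit $\mathcal{O} \subseteq \mathcal{O}(\mathcal{C})$ and any $X_0 \in \mathcal{O}$ the set $\{\tilde t_X : X \in \mathcal{O}\}$ is exactly the orbit of $\tilde t_{X_0}$ under squaring by $\mathrm{Gal}(\mathbb{Q}(\zeta_N)/\mathbb{Q}) \cong (\mathbb{Z}/N\mathbb{Z})^\times$. Hence the $\mathfrak{t}$-spectrum of $\rho_\mathcal{C}$, viewed as a set, is a union of two such square-Galois orbits, one of which is the orbit of $\tilde t_\mathbbm{1}$. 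Because the conductor is exactly $N$, some $\tilde t_X$ has $2$-adic order $2^\lambda$; splitting its $\mathfrak{t}$-eigenvalue into its $2$-part and its odd part shows that some constituent $\alpha_j$ must contain a primitive $2^\lambda$-th root of unity in its $\mathfrak{t}$-spectrum, whose square-Galois orbit already contributes at least $2^{\lambda-3}$ distinct values to the $2$-part of the spectrum of $\rho_\mathcal{C}$.

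The heart of the argument is then a finite case check against Appendix \ref{tables}. First I would eliminate $\lambda \ge 5$: inspecting the irreducible representations of $\mathrm{SL}(2,\mathbb{Z}/2^\lambda\mathbb{Z})$ whose $\mathfrak{t}$-spectrum meets the primitive $2^\lambda$-th roots of unity, one checks that for $\lambda \ge 5$ the resulting $\mathfrak{t}$-spectrum of $\rho_\mathcal{C}$ — after tensoring with any admissible $\beta_j$ — cannot be covered by two square-Galois orbits one of which is that of $\tilde t_\mathbbm{1}$; equivalently such a $\mathcal{C}$ would need at least three Galois orbits. Hence $\lambda = 4$ and $N = 16m'$. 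For $\lambda = 4$, the $\mathrm{SL}(2,\mathbb{Z}/16\mathbb{Z})$ table shows that every irreducible whose $\mathfrak{t}$-spectrum meets the primitive $16$-th roots of unity has dimension at least $3$. The two-orbit bound then leaves no room for two non-isomorphic irreducible constituents of $\rho_\mathcal{C}$, and forces the common constituent $\psi_e \otimes \psi_o$ to have minimal possible $2$-part dimension $3$ (so $\psi_e$ is one of the three-dimensional representations in the table) and odd part $\psi_o$ with $\mathfrak{t}$-spectrum a single Galois orbit over $\mathbb{Q}$, i.e. a transitive representation. An analogous inspection at odd primes rules out $p^2 \mid m'$, so $m := m'$ is square-free; and $m \ge 3$, since $m = 1$ would make $\rho_\mathcal{C}$ a representation of $\mathrm{SL}(2,\mathbb{Z}/16\mathbb{Z})$ realized only by Ising-type modular data, the base case treated separately. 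Thus $\rho_\mathcal{C} \cong n(\psi_e \otimes \psi_o)$ with $n$ the common multiplicity.

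The main obstacle is precisely the case analysis in the previous paragraph: extracting from the $\mathfrak{t}$-spectrum tables exactly why $32 \mid N$ is incompatible with two Galois orbits, and why at $\lambda = 4$ the $2$-part of $\rho_\mathcal{C}$ is forced to be the three-dimensional $\psi_e$ rather than a larger irreducible or a genuine sum of several constituents. This amounts to carefully balancing the sizes of square-Galois orbits of roots of unity against $\mathrm{rank}(\mathcal{C})$ and against the admissible $\mathfrak{t}$-spectra listed in the appendix, and is where essentially all of the work lies.
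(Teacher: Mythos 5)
Your proposal is correct and follows essentially the same route as the paper: decompose $\rho_\mathcal{C}$ into outer tensor products of $2$-primary and odd-level irreducibles via the Chinese remainder theorem, convert $|\mathrm{Orb}(\mathcal{C})|=2$ into the statement that the $\mathfrak{t}$-spectrum is covered by two square-Galois orbits (using $\sigma^2(\tilde{t}_X)=\tilde{t}_{\hat{\sigma}(X)}$), and then read off from the appendix that this forces the $2$-part to be one of the sixteen $3$-dimensional level-$16$ irreducibles and the odd part to be transitive (hence of square-free level), with a single isotypic component because any other summand would contribute new eigenvalues. The only stylistic difference is that the paper compresses your ``heart of the argument'' case check into two table lookups via the multiplicativity of the number of square-Galois orbits of $\mathfrak{t}$-eigenvalues across coprime levels together with the nonexistence of transitive irreducibles of level $2^\lambda$ for $\lambda\geq4$.
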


\begin{proof}
Let $\psi$ be an irreducible summand of $\rho_\mathcal{C}$ whose level is divisible by $16$.  Then $\psi\cong\psi_e\otimes\psi_o$ where $\psi_e$ is an irreducible representation of level $2^n$ for some $n\in\mathbb{Z}_{\geq3}$ and $\psi_o$ is an irreducible representation of level $m$ for some odd integer $m$ by the Chinese remainder theorem.  Since the number of square Galois orbits of $\mathfrak{t}$-eigenvalues is multiplicative across coprime orders, then exactly one each of $\psi_e,\psi_o$ have 1 or 2 square Galois orbits of $\mathfrak{t}$-eigenvalues.  But there do not exist transitive irreducible representations of level $2^\lambda$ for $\lambda\geq4$ (Appendix \ref{tables}).  This implies that $\psi_o$ is transitive, i.e. $m$ is square-free.  Moreover, $\psi_e$ has two square Galois orbits of $\mathfrak{t}$-eigenvalues, so $\psi_e$ is isomorphic to one of the sixteen 3-dimensional irreducible representations of $\mathrm{SL}(2,\mathbbm{Z}/16\mathbb{Z})$  (Appendix \ref{tables}).

\par Let $\varphi$ be any other irreducible summand of $\rho_\mathcal{C}$.  The eigenvalues of $\varphi(\mathfrak{t})$ must be a subset of those of $\psi$, or else $|\mathrm{Orb}(\mathcal{C})|>2$.  In the case $\mathrm{ord}(\mathfrak{t}_\mathcal{C})=16m$, the eigenvalues of $\psi(\mathfrak{t})$ are all primitive $16m$-th roots of unity, hence $\varphi$ is an irreducible representation of level $16m$ as well.  By the above argument, $\varphi\cong\psi$ since they have the same eigenvalues.
\end{proof}

\begin{lemma}\label{linearalgebra}
Let $\rho$ be an irreducible representation of $\mathrm{SL}(2,\mathbb{Z}/N\mathbb{Z})$ for some $N\in\mathbb{Z}_{\geq2}$ and $\mathcal{C}$ be a modular tensor category.  If $\rho$ has distinct eigenvalues, then $\rho_\mathcal{C}\not\cong n\rho$ for any $n\in\mathbb{Z}_{\geq2}$.
\end{lemma}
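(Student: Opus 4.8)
The plan is to leverage the two numerical features that distinguish the unit object $\mathbbm{1}$ inside the unitary representation $\rho_\mathcal{C}$ determined by the normalized modular data: its twist is trivial, so the standard basis vector $v_\mathbbm{1}$ is an eigenvector of $\rho_\mathcal{C}(\mathfrak{t})=\tilde t$ with eigenvalue $\lambda_0:=\tilde t_\mathbbm{1}$; and the $\mathbbm{1}$-column of $\tilde s$ equals $\dim(\mathcal{C})^{-1/2}d$, where $d:=(\dim(X))_{X\in\mathcal{O}(\mathcal{C})}$ is the vector of categorical dimensions, every entry of which is nonzero by \cite[Theorem 2.3]{ENO}. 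Assume toward a contradiction that $\rho_\mathcal{C}\cong n\rho$ for some $n\geq2$ and some irreducible $\rho$ with pairwise distinct $\mathfrak{t}$-eigenvalues.

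First I would record that $\rho_\mathcal{C}$ is unitary: the relation $\overline{s_{X,Y}}=s_{X^\ast,Y}$ together with $\tilde s^2$ being the charge-conjugation permutation forces $\tilde s$ to be unitary, and $\tilde t$ is diagonal with roots of unity. Hence there is a \emph{unitary} intertwiner $\Phi\colon\mathbb{C}^{\mathcal{O}(\mathcal{C})}\to\mathbb{C}^n\otimes V_\rho$ carrying $\rho_\mathcal{C}$ to $\mathrm{id}_{\mathbb{C}^n}\otimes\rho$. Since $\rho(\mathfrak{t})$ has simple spectrum, its $\lambda_0$-eigenspace is a line $\mathbb{C}e$ with $\|e\|=1$, so the $\lambda_0$-eigenspace of $\tilde t$, namely $E:=\mathrm{span}\{v_X:\tilde t_X=\lambda_0\}$, satisfies $\Phi(E)=\mathbb{C}^n\otimes e$ and $\dim E=n\geq2$. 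In particular there is a simple object $X_0\neq\mathbbm{1}$ with $\tilde t_{X_0}=\lambda_0$, and I will derive the contradiction $\dim(X_0)=0$.

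Next I would transport $v_\mathbbm{1}$ and $d$ through $\Phi$. Write $\Phi(v_X)=a_X\otimes e$ for each $X$ with $\tilde t_X=\lambda_0$; unitarity of $\Phi$ makes $\{a_X:\tilde t_X=\lambda_0\}$ an orthonormal basis of $\mathbb{C}^n$, and in particular $\|a_\mathbbm{1}\|=1$. Applying $\mathfrak{s}$ to $v_\mathbbm{1}=\Phi^{-1}(a_\mathbbm{1}\otimes e)$ and using $\tilde s v_\mathbbm{1}=\dim(\mathcal{C})^{-1/2}d$ gives $\Phi(d)=\dim(\mathcal{C})^{1/2}\,a_\mathbbm{1}\otimes\rho(\mathfrak{s})e$. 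For any $X$ with $\tilde t_X=\lambda_0$ the coordinate $\dim(X)$ of $d$ along $v_X$ is recovered as $\langle\Phi(d),\Phi(v_X)\rangle=\dim(\mathcal{C})^{1/2}\langle a_\mathbbm{1},a_X\rangle\langle\rho(\mathfrak{s})e,e\rangle$. Taking $X=\mathbbm{1}$ pins down $\langle\rho(\mathfrak{s})e,e\rangle=\dim(\mathcal{C})^{-1/2}$, and therefore $\dim(X)=\langle a_\mathbbm{1},a_X\rangle$ for all such $X$. Taking $X=X_0$ and invoking orthonormality of the $a_X$ gives $\dim(X_0)=\langle a_\mathbbm{1},a_{X_0}\rangle=0$, the desired contradiction.

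The only step requiring care is the passage to a unitary intertwiner and the bookkeeping in the tensor product: one must know $\rho_\mathcal{C}$ is unitary, so that an arbitrary intertwiner can be corrected to a unitary one by the usual polar-decomposition argument, and one must keep straight that the $v_X$-coordinate of $d$ is exactly the Hermitian pairing $\langle\Phi(d),\Phi(v_X)\rangle$ since $\Phi$ is unitary. Granting those, the argument is the short computation above, and the hypotheses enter precisely where needed: distinctness of the $\mathfrak{t}$-eigenvalues of $\rho$ forces $\dim E=n$, and $n\geq2$ produces the extra simple object $X_0\neq\mathbbm{1}$ of trivial twist whose vanishing dimension is absurd.
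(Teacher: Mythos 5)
Your argument is correct, but it reaches the contradiction by a different mechanism than the paper. The paper takes an arbitrary intertwiner $P$ between $\rho_\mathcal{C}$ and the block-diagonal model of $n\rho$, normalizes it (via a permutation) to commute with $\rho_\mathcal{C}(\mathfrak{t})$, and uses the distinctness of the eigenvalues of $\rho(\mathfrak{t})$ to force all blocks of $P$ and $P^{-1}$ to be diagonal; expanding $\rho_\mathcal{C}(\mathfrak{s})=PYP^{-1}$ block by block then shows every row and column of $\tilde{s}$ contains a zero entry, which contradicts the strict positivity of the row of $\tilde{s}$ realizing the Frobenius--Perron character. You instead unitarize the intertwiner and focus on the single eigenspace of $\tilde{t}$ containing $v_{\mathbbm{1}}$: since $\rho(\mathfrak{t})$ has simple spectrum, that eigenspace has dimension exactly $n\geq2$, and orthogonality of the vectors $a_X$ forces $\dim(X_0)=\tilde{s}_{\mathbbm{1},X_0}\cdot\dim(\mathcal{C})^{1/2}=0$ for any second simple object $X_0$ with $\tilde{t}_{X_0}=\tilde{t}_{\mathbbm{1}}$, contradicting \cite[Theorem 2.3]{ENO}. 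The two proofs exploit the same underlying obstruction (vanishing of $\tilde{s}$-entries between simple objects sharing a twist), but yours isolates a single vanishing entry in the $\mathbbm{1}$-row rather than a zero in every row, trades the block-matrix bookkeeping for an inner-product computation, and in exchange must justify two standard facts the paper avoids: that $\tilde{s}$ is unitary (which follows from $\tilde{s}\,\overline{\tilde{s}}=\tilde{s}^2C^{-1}=I$ with $C$ the charge conjugation) and that isomorphic unitary representations of a finite group are unitarily isomorphic. Both of these are routine, so your proof is complete as outlined and arguably cleaner; the paper's version has the minor advantage of requiring no unitarization and of exhibiting zeros in every row, which is slightly more information than is needed.
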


\begin{proof}
Assume to the contrary that $\rho_\mathcal{C}\cong n\rho$.  Set $r:=\mathrm{rank}(\mathcal{C})=n\dim(\rho)$.  Let $Y$ be the block diagonal matrix of $n$ copies of $\rho(\mathfrak{s})$, $Z$ be the block diagonal matrix of $n$ copies of $\rho(\mathfrak{t})$, and $P$ be the $r\times r$ change-of-basis matrix such that $Y=P^{-1}\rho_\mathcal{C}(\mathfrak{s})P$ and $Z=P^{-1}\rho_\mathcal{C}(\mathfrak{t})P$.  By possibly commuting with a permutation matrix, we may assume that $P$ is such that $\rho_\mathcal{C}(\mathfrak{t})=P^{-1}\rho_\mathcal{C}(\mathfrak{t})P$ on the nose, i.e. $P\rho_\mathcal{C}(\mathfrak{t})=\rho_\mathcal{C}(\mathfrak{t})P$.    Consider $P$ and $P^{-1}$ as matrices of $\dim(\rho)\times\dim(\rho)$ blocks $P_{jk}$ and $R_{jk}$ for $1\leq j,k\leq n$, respectively.  Then for all $1\leq j,k\leq n$, $P\rho_\mathcal{C}(\mathfrak{t})=\rho_\mathcal{C}(\mathfrak{t})P$ implies $P_{jk}\rho(\mathfrak{t})=\rho(\mathfrak{t})P_{jk}$, and $\rho_\mathcal{C}(\mathfrak{t})P^{-1}=P^{-1}\rho_\mathcal{C}(\mathfrak{t})$ implies $\rho(\mathfrak{t})R_{jk}=R_{jk}\rho(\mathfrak{t})$.  Since $\rho(\mathfrak{t})$ has distinct eigenvalues, these commutations imply $P_{jk}$ and $R_{jk}$ are diagonal matrices for all $1\leq j,k\leq n$.  We compute a generic block of $PY$ as
\begin{equation}
(PY)_{jk}=\sum_{\ell=1}^nP_{j\ell}(Y)_{\ell k}=P_{jk}\rho(\mathfrak{s}),
\end{equation}
hence for $1\leq j,k\leq n$,
\begin{equation}
(\rho_\mathcal{C}(\mathfrak{s}))_{jk}=(PYP^{-1})_{jk}=\sum_{\ell=1}^n P_{j\ell}\rho(\mathfrak{s})R_{\ell k}.
\end{equation}
But $PP^{-1}=I_{r\times r}$ implies $\sum_{\ell=1}^nP_{j\ell}R_{\ell k}=0$ for $j\neq k$.  Therefore, the diagonal elements of $(\rho_\mathcal{C}(\mathfrak{s}))_{jk}$ are 0 for all $j\neq k$, and since $\rho_\mathcal{C}(\mathfrak{s})$ is symmetric, each row/column of $\rho_\mathcal{C}(\mathfrak{s})$ contains at least one 0 entry.  This cannot occur since the row/column corresponding to the Frobenius-Perron dimension must consist only of positive real numbers.
\end{proof}

\begin{lemma}\label{preleminf}
Let $\mathcal{C}$ be a modular tensor category with $|\mathrm{Orb}(\mathcal{C})|=2$.  If $\mathcal{C}_\mathrm{pt}\simeq\mathrm{sVec}$, then $\rho_\mathcal{C}\cong\psi_e\otimes\psi_o$, where $\psi_o$ is an irreducible transitive representation of $\mathrm{SL}(2,\mathbbm{Z}/m\mathbb{Z})$ as in Lemma \ref{porque}, and $\psi_e$ is a 3-dimensional irreducible representation of $\mathrm{SL}(2,\mathbbm{Z}/16\mathbb{Z})$.
\end{lemma}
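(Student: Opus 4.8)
The plan is to combine the earlier structural lemmas with a short Galois-theoretic computation in three moves: (i) use the fermion generating $\mathrm{sVec}$ together with Lemma \ref{ad} to show that $16$ divides the conductor $N$; (ii) feed this into Lemma \ref{porque} to conclude $\rho_\mathcal{C}\cong n(\psi_e\otimes\psi_o)$ for some multiplicity $n\in\mathbb{Z}_{\geq1}$, with $\psi_e$ a $3$-dimensional irreducible representation of $\mathrm{SL}(2,\mathbb{Z}/16\mathbb{Z})$ and $\psi_o$ an irreducible transitive representation of $\mathrm{SL}(2,\mathbb{Z}/m\mathbb{Z})$ of odd square-free level $m$; (iii) eliminate $n\geq2$ with Lemma \ref{linearalgebra}. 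To begin, let $f$ be the nontrivial simple object generating $\mathcal{C}_\mathrm{pt}\simeq\mathrm{sVec}$, so that $\theta_f=-1$. Since $\mathrm{sVec}$ is symmetrically braided, $\mathcal{C}_\mathrm{pt}\subseteq C_\mathcal{C}(\mathcal{C}_\mathrm{pt})=\mathcal{C}_\mathrm{ad}$; as $|\mathrm{Orb}(\mathcal{C})|=2$ and $\mathcal{C}_\mathrm{pt}$ is nontrivial, Lemma \ref{ad} gives $\mathcal{O}(\mathcal{C}_\mathrm{ad})=\mathcal{O}_\mathbbm{1}$. Hence $f\in\mathcal{O}_\mathbbm{1}$, so there is $\tau\in\mathrm{Gal}(\mathbb{Q}(\zeta_N)/\mathbb{Q})$ with $\hat{\tau}(\mathbbm{1})=f$.

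For move (i) I would use the Galois action on the normalized twists. Recalling from Section \ref{secgal} that $\tilde{t}_X=\gamma^{-1}\theta_X$ and $\sigma^2(\tilde{t}_X)=\tilde{t}_{\hat{\sigma}(X)}$, taking $X=\mathbbm{1}$ and $\sigma=\tau$ yields $\tau^2(\gamma^{-1})=\tilde{t}_f=-\gamma^{-1}$, hence $\tau^2(\gamma)=-\gamma$. Since $\rho_\mathcal{C}$ factors through $\mathrm{SL}(2,\mathbb{Z}/N\mathbb{Z})$, the $\mathfrak{t}$-eigenvalue $\tilde{t}_\mathbbm{1}=\gamma^{-1}$ is an $N$-th root of unity, so $\mathrm{ord}(\gamma)\mid N$ and it suffices to prove $16\mid\mathrm{ord}(\gamma)$. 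This is now pure number theory: if a root of unity $\zeta$ satisfies $\tau^2(\zeta)=-\zeta$, write $\mathrm{ord}(\zeta)=2^{a}b$ with $b$ odd, split $\zeta=\zeta'\zeta''$ into its primitive $2^a$-th and $b$-th parts, and compare $2$-primary parts to get $\tau^2(\zeta')=-\zeta'$; if $\tau$ acts on $\mathbb{Q}(\zeta_{2^a})$ by the residue $c\in(\mathbb{Z}/2^a\mathbb{Z})^\times$ this says $c^2\equiv 1+2^{a-1}\pmod{2^a}$. Because the squares in $(\mathbb{Z}/2^{a}\mathbb{Z})^\times$ for $a\geq3$ are exactly the residues $\equiv1\pmod8$ (and the cases $a\leq 3$ are seen directly to be unsolvable, as is $a=0$ since then $-\zeta$ and $\zeta$ have different orders), solvability forces $2^{a-1}\equiv0\pmod8$, i.e. $a\geq4$. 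Applying this with $\zeta=\gamma$ gives $16\mid\mathrm{ord}(\gamma)\mid N$.

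With $16\mid N$, Lemma \ref{porque} applies and produces $N=16m$ with $m$ odd and square-free, a multiplicity $n\in\mathbb{Z}_{\geq1}$, and $\rho_\mathcal{C}\cong n(\psi_e\otimes\psi_o)$ with $\psi_e$ a $3$-dimensional irreducible representation of $\mathrm{SL}(2,\mathbb{Z}/16\mathbb{Z})$ and $\psi_o$ an irreducible transitive representation of $\mathrm{SL}(2,\mathbb{Z}/m\mathbb{Z})$. By the Chinese remainder theorem $\mathrm{SL}(2,\mathbb{Z}/16m\mathbb{Z})\cong\mathrm{SL}(2,\mathbb{Z}/16\mathbb{Z})\times\mathrm{SL}(2,\mathbb{Z}/m\mathbb{Z})$, so $\psi_e\otimes\psi_o$ is irreducible, and its $\mathfrak{t}$-spectrum is multiplicity-free: the three $\mathfrak{t}$-eigenvalues of $\psi_e$ are distinct powers of $\zeta_{16}$ and those of a transitive $\psi_o$ are distinct $m$-th roots of unity (Appendix \ref{tables}), so, having coprime orders, their pairwise products are pairwise distinct. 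Thus $\psi_e\otimes\psi_o$ is an irreducible representation of $\mathrm{SL}(2,\mathbb{Z}/N\mathbb{Z})$ with distinct eigenvalues, and Lemma \ref{linearalgebra} forbids $\rho_\mathcal{C}\cong n(\psi_e\otimes\psi_o)$ for $n\geq2$; hence $n=1$ and $\rho_\mathcal{C}\cong\psi_e\otimes\psi_o$. I expect the delicate point to be move (i): one must carefully assemble Lemma \ref{ad}, the symmetry of $\mathrm{sVec}$, and the square-twist relation to reach $\tau^2(\gamma)=-\gamma$, after which the arithmetic extracting $16\mid N$ and the multiplicity-killing via Lemma \ref{linearalgebra} are routine.
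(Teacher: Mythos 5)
Your proposal is correct and follows essentially the same route as the paper's proof: place the fermion in $\mathcal{O}_\mathbbm{1}$, use the square Galois action on normalized twists to force $16\mid N$, and then invoke Lemmas \ref{porque} and \ref{linearalgebra}. The only differences are cosmetic — you reach $f\in\mathcal{O}_\mathbbm{1}$ via Lemma \ref{ad} rather than via Theorem \ref{what2}, and you spell out the $2$-adic computation behind the assertion that a root of unity square Galois conjugate to its negative has order divisible by $16$, as well as the multiplicity-freeness needed to apply Lemma \ref{linearalgebra}, both of which the paper leaves implicit.
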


\begin{proof}
We begin by noting that $\mathcal{C}_\mathrm{pt}\subset\mathcal{O}_\mathbbm{1}$, or else $\mathcal{C}_\mathrm{pt}$ is nondegenerately braided by Theorem \ref{what2}, a contradiction.  Let $\sigma\in\mathrm{Gal}(\overline{\mathbb{Q}}/\mathbb{Q})$ such that $\hat{\sigma}(\mathbbm{1})$ is the nontrivial invertible object of $\mathrm{sVec}$.  In particular, $1=\theta_\mathbbm{1}=-\theta_{\hat{\sigma}(\mathbbm{1})}=-1$, hence $-t_\mathbbm{1}=t_{\hat{\sigma}(\mathbbm{1})}=\sigma^2(t_\mathbbm{1})$.  The only roots of unity which are square Galois conjugate to their negative have order divisible by 16, so our claim follows from Lemma \ref{porque} and Lemma \ref{linearalgebra}.
\end{proof}

\begin{proposition}\label{eqqq}
Let $\mathcal{C}$ be a modular tensor category with $|\mathrm{Orb}(\mathcal{C})|=2$.  If $\mathcal{C}_\mathrm{pt}\simeq\mathrm{sVec}$, then $\mathcal{C}\simeq\mathcal{I}\boxtimes\mathcal{T}$ where $\mathcal{I}$ is an Ising modular tensor category and $\mathcal{T}$ is a transitive modular tensor category.
\end{proposition}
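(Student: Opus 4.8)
The plan is to use the description of $\rho_\mathcal{C}$ from Lemma \ref{preleminf} to produce an Ising modular subcategory of $\mathcal{C}$ and then split it off. First I would reduce to the case that $\mathcal{C}$ is pseudounitary, via Lemma \ref{pseudo} applied with $X$ the nontrivial invertible object $f$ of $\mathcal{C}_\mathrm{pt}\simeq\mathrm{sVec}$: one has $\dim(f)=1=\mathrm{FPdim}(f)$, every simple object centralizing $f$ lies in $C_\mathcal{C}(\mathcal{C}_\mathrm{pt})=\mathcal{C}_\mathrm{ad}$, and $\mathcal{O}(\mathcal{C}_\mathrm{ad})=\mathcal{O}_\mathbbm{1}$ by Lemma \ref{ad}, which contains the pseudoinvertible object $\mathbbm{1}$; so the hypotheses of Lemma \ref{pseudo} hold and $\mathcal{C}$ is Galois conjugate to a pseudounitary category. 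Galois conjugacy preserves $|\mathrm{Orb}(\mathcal{C})|=2$, the isomorphism class of $\mathcal{C}_\mathrm{pt}$, the conductor, and the shape of $\rho_\mathcal{C}$ given by Lemma \ref{preleminf}, so we may assume $\mathcal{C}$ is pseudounitary.

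Next I would identify $\psi_e$ with the modular representation $\rho_\mathcal{I}$ of an Ising modular tensor category $\mathcal{I}$ and $\psi_o$ with $\rho_\mathcal{T}$ for a transitive modular tensor category $\mathcal{T}$. The first is a finite check: the sixteen $3$-dimensional irreducible representations of $\mathrm{SL}(2,\mathbb{Z}/16\mathbb{Z})$ with two square Galois orbits of $\mathfrak{t}$-eigenvalues (Appendix \ref{tables}) are exactly the modular representations of the sixteen Ising modular tensor categories (those with $\theta_\psi=-1$, $\theta_\sigma$ a primitive $16$-th root of unity, and $\dim(\sigma)=\pm\sqrt2$). For the second, Lemma \ref{preleminf} already gives that $\psi_o$ is a transitive irreducible representation of $\mathrm{SL}(2,\mathbb{Z}/m\mathbb{Z})$ with $m\ge3$ odd and square-free; one must further show every prime dividing $m$ is at least $5$, so that $\psi_o=\rho_\mathcal{T}$ with $\mathcal{T}\simeq\boxtimes_{p\mid m}\mathcal{C}(\mathfrak{sl}_2,p-2)_\mathrm{ad}$ by \cite[Theorem II]{2020arXiv200701366N}. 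I expect this exclusion of the prime $3$ to be the main obstacle: the only transitive representations of $\mathrm{SL}(2,\mathbb{Z}/3\mathbb{Z})$ are one-dimensional with $\mathfrak{t}\mapsto\zeta_3^{\pm1}$, and one has to argue that tensoring such a representation into $\psi_e$ yields a $3$-dimensional system of twists realized by no Ising (hence no) modular tensor category, contradicting the fact that this rank-$3$ block must actually occur inside $\mathcal{C}$. Granting this, $\rho_\mathcal{C}\cong\psi_e\otimes\psi_o\cong\rho_{\mathcal{I}\boxtimes\mathcal{T}}$.

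The decisive linear-algebra step is that $\rho_\mathcal{C}(\mathfrak{t})$ has pairwise distinct eigenvalues: these are the products of an eigenvalue of $\psi_e(\mathfrak{t})$, a $16$-th root of unity taking three distinct values, with an eigenvalue of $\psi_o(\mathfrak{t})$, an $m$-th root of unity, and all $m$-th roots appearing are distinct, so distinctness of the products follows from $\gcd(16,m)=1$ in the same spirit as the proof of Lemma \ref{linearalgebra} (here multiplicity one makes the argument shorter). Consequently any intertwiner realizing $\rho_\mathcal{C}\cong\rho_{\mathcal{I}\boxtimes\mathcal{T}}$ is, in the canonical bases, a permutation matrix times a diagonal matrix; since $\psi_e\otimes\psi_o$ is an irreducible representation of $\mathrm{SL}(2,\mathbb{Z}/16m\mathbb{Z})$, a matrix commuting with $\rho_\mathcal{C}$ is scalar, which forces this diagonal part to be a scalar times a sign matrix. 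Comparing the (real, by pseudounitarity) rows indexed by $\mathbbm{1}$ of the two $\tilde{s}$-matrices then shows that $\mathcal{C}$ and $\mathcal{I}\boxtimes\mathcal{T}$, after replacing $\mathcal{I}$ by its Galois conjugate if necessary, have identical modular data up to a relabeling of simple objects, hence identical fusion rules by the Verlinde formula.

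Finally I would extract the Ising subcategory and conclude. The fusion rules of $\mathcal{C}$ contain a self-dual simple object $X$ with $\dim(X)=\pm\sqrt2$, corresponding to $\sigma\boxtimes\mathbbm{1}_\mathcal{T}$; since $\mathbbm{1}$ and $f$ are the only invertible objects of $\mathcal{C}$, necessarily $X\otimes X=\mathbbm{1}\oplus f$, and therefore $f\otimes X\cong X$. The balancing identity then gives $s_{f,X}=\theta_f^{-1}\dim(X)=-\dim(X)\ne\dim(X)$, so $f$ does not centralize $X$; as $X$ also fails to centralize $f$, the fusion subcategory $\langle X\rangle$, which has the Ising fusion rules, has trivial symmetric center and is thus an Ising modular subcategory $\mathcal{I}'\subseteq\mathcal{C}$. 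By \cite[Proposition 2.2]{DMNO}, $\mathcal{C}\simeq\mathcal{I}'\boxtimes\mathcal{D}$ with $\mathcal{D}:=C_\mathcal{C}(\mathcal{I}')$ modular; from $\mathcal{C}_\mathrm{pt}=\mathrm{sVec}=\mathcal{I}'_\mathrm{pt}$ we get that $\mathcal{D}_\mathrm{pt}$ is trivial, hence $\mathcal{D}_\mathrm{ad}=\mathcal{D}$ and $\mathcal{C}_\mathrm{ad}=\mathcal{I}'_\mathrm{ad}\boxtimes\mathcal{D}=\mathrm{sVec}\boxtimes\mathcal{D}$. Lemma \ref{ad} gives $\mathcal{O}_\mathbbm{1}=\mathcal{O}(\mathcal{C}_\mathrm{ad})$, and since the Galois orbit of $\mathbbm{1}\boxtimes\mathbbm{1}$ is contained in $\{\mathbbm{1},f\}\times\mathcal{O}_\mathbbm{1}(\mathcal{D})$ while $\mathcal{O}(\mathcal{C}_\mathrm{ad})=\{\mathbbm{1},f\}\times\mathcal{O}(\mathcal{D})$, we conclude $\mathcal{O}(\mathcal{D})=\mathcal{O}_\mathbbm{1}(\mathcal{D})$, i.e. $\mathcal{D}$ is transitive. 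Hence $\mathcal{C}\simeq\mathcal{I}'\boxtimes\mathcal{D}$ with $\mathcal{I}'$ an Ising modular tensor category and $\mathcal{D}$ transitive, as desired.
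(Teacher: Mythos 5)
Your overall strategy is the same as the paper's: start from the factorization $\rho_\mathcal{C}\cong\psi_e\otimes\psi_o$ of Lemma \ref{preleminf}, use it to locate a simple object of dimension $\pm\sqrt{2}$, observe that this object $\otimes$-generates an Ising modular subcategory, and split it off with \cite[Proposition 2.2]{DMNO}. Your closing paragraph (checking $X\otimes X\cong\mathbbm{1}\oplus f$, that $\langle X\rangle$ has trivial symmetric center, and that the complementary factor is transitive via Lemma \ref{ad}) is sound and in fact more explicit than the paper's one-line conclusion. The main structural differences are that you first pass to a pseudounitary Galois conjugate (valid, but the paper avoids this by tracking signs directly) and that you identify the \emph{entire} modular datum of $\mathcal{C}$ with that of $\mathcal{I}\boxtimes\mathcal{T}$; the paper instead reads off the existence of a unique self-dual simple of dimension $\sqrt{2}$ from the explicit matrix $\psi_e(\mathfrak{s})$ in (\ref{eqq}) together with the fact that each row of a transitive $\tilde{s}$-matrix has a unique entry $\pm1/\sqrt{\dim(\mathcal{D})}$, which is shorter and sidesteps the permutation/sign bookkeeping you leave implicit (e.g., why the intertwiner must match unit row to unit row).

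The genuine gap is the step you yourself flag: ruling out $3\mid m$, i.e., identifying $\psi_o$ with the modular representation of a transitive modular tensor category. Your proposed resolution --- that a level-$3$ tensor factor would force ``a $3$-dimensional system of twists realized by no Ising\ldots contradicting the fact that this rank-$3$ block must actually occur inside $\mathcal{C}$'' --- does not work as stated: $\rho_\mathcal{C}\cong\psi_e\otimes\psi_3\otimes\psi_{m/3}$ is irreducible, and there is no fusion subcategory of $\mathcal{C}$ whose modular representation is the ``block'' $\psi_e\otimes\psi_3$, so no such realizability contradiction is available. The paper disposes of this point by appealing to the classification in \cite{2020arXiv200701366N}: the transitive irreducible representation $\psi_o$ of square-free odd level $m$ occurring in a modular representation is realized by $\boxtimes_{p\mid m}\mathcal{C}(\mathfrak{sl}_2,p-2)^{\sigma_p}_\mathrm{ad}$, which in particular forces every prime dividing $m$ to be at least $5$. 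Without either importing that result or supplying a genuine argument excluding the one-dimensional level-$3$ factors, your identification $\psi_o=\rho_\mathcal{T}$, and hence the rest of your modular-data comparison, is incomplete.
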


\begin{proof}
Lemma \ref{preleminf} states that $\rho_\mathcal{C}\cong\psi_e\otimes\psi_o$ where $\psi_o$ is an irreducible transitive representation of $\mathrm{SL}(2,\mathbbm{Z}/m\mathbb{Z})$ for some square-free $m\in\mathbb{Z}_{\geq3}$, and $\psi_e$ is a 3-dimensional irreducible representation of $\mathrm{SL}(2,\mathbbm{Z}/16\mathbb{Z})$.  In particular, the eigenvalues of $\rho_\mathcal{C}(\mathfrak{t})$ are distinct and so $\mathcal{C}$ is self-dual.  Up to a change of basis, we have
\begin{equation}
\psi_e(\mathfrak{s})=\dfrac{\zeta_4^k}{2}\left[\begin{array}{ccc}0 & \sqrt{2} & -\sqrt{2}\\ \sqrt{2} &1 & 1 \\
-\sqrt{2} &1 & 1\end{array}\right]\label{eqq}
\end{equation}
where $\zeta^k_4=\exp(2k\pi i/4)$ for some $k=0,1,2,3$ \cite[Table A2]{MR1354262}.  Recall that the irreducible representation $\psi_o$ is realized by a modular tensor category $\mathcal{D}$ which is equivalent to the Deligne product of transitive modular tensor categories $\mathcal{C}(\mathfrak{sl}_2,p-2)^{\sigma_p}_\mathrm{ad}$ for some Galois automorphism $\sigma_p\in\mathrm{Gal}(\mathbb{Q}(\zeta_p)/\mathbb{Q})$ for all $p\mid m$.  So we may assume that $\psi_o(\mathfrak{s})$ is the normalized $S$-matrix of $\mathcal{D}$ by a change of basis independent to that for $\psi_e(\mathfrak{s})$ since their $\mathfrak{t}$-eigenvalues are disjoint.  In particular, $\psi_o(\mathfrak{s})$ is real, so $k\in\{0,2\}$ in (\ref{eqq}).   Since $\mathcal{D}$ is transitive, every row/column of $\psi_o(\mathfrak{s})$ contains a unique entry of the form $\pm1/\sqrt{\dim(\mathcal{D})}$.  By observing the rows/columns without zeroes of $\psi_e(\mathfrak{s})$, we see that $1/\sqrt{\dim(\mathcal{C})}$ is either $1/\sqrt{2\dim(\mathcal{D})}$ or $1/\sqrt{4\dim(\mathcal{D})}$, i.e.\ $\dim(\mathcal{C})=2\dim(\mathcal{D})$ or $\dim(\mathcal{C})=4\dim(\mathcal{D})$.  But since every row/column of $\rho_\mathcal{C}(\mathfrak{s})$ contains a unique entry of the form $\pm1/\sqrt{2\dim(\mathcal{D})}$, then the former would imply there exists a unique pseudoinvertible object $X\in\mathcal{O}(\mathcal{C})$.  The pointed subcategory $\mathcal{C}_\mathrm{pt}$ is assumed nontrivial, so we may conclude $\dim(\mathcal{C})=4\dim(\mathcal{D})$ and moreover, $\mathcal{C}$ possesses a unique, simple, self-dual object $X$ with $\dim(X)=\sqrt{2}$.  It is clear that $X$ must $\otimes$-generate a nondegenerately braided fusion subcategory of $\mathcal{C}$ which is equivalent to one of the Ising modular tensor categories and therefore $\mathcal{C}$ factorizes as claimed.
\end{proof}

\begin{corollary}\label{pointedprop}
Let $\mathcal{C}$ be a modular tensor category.  Then $|\mathrm{Orb}(\mathcal{C})|=2$ and $\mathcal{C}_\mathrm{pt}$ is nontrivial if and only if $\mathcal{C}\simeq\mathcal{D}\boxtimes\mathcal{T}$ is an equivalence of modular tensor categories, where $\mathcal{T}$ is a transitive modular tensor category, and $\mathcal{D}$ is either pointed of prime dimension coprime to the conductor of $\mathcal{T}$, or an Ising modular tensor category.
\end{corollary}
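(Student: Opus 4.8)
The plan is to derive the forward implication by assembling Proposition~\ref{firstwan} with Lemma~\ref{ad}, Proposition~\ref{equi}, Proposition~\ref{eqqq}, and the orbit count recorded in Note~\ref{cyclic}, and to obtain the reverse implication from the fact that Galois orbit counts are multiplicative over coprime Deligne products. For the forward direction, assume $|\mathrm{Orb}(\mathcal{C})|=2$ and $\mathcal{C}_\mathrm{pt}\neq\mathrm{Vec}$ and apply Proposition~\ref{firstwan}. If $\mathrm{FPdim}(\mathcal{C}_\mathrm{pt})=2$ with $\mathcal{C}_\mathrm{pt}$ symmetric, then $\mathcal{C}_\mathrm{pt}$ is equivalent to $\mathrm{Rep}(\mathbb{Z}/2\mathbb{Z})$ or to $\mathrm{sVec}$; in the Tannakian case Lemma~\ref{ad} gives $\mathcal{O}_\mathbbm{1}=\mathcal{O}(\mathcal{C}_\mathrm{ad})$ and Proposition~\ref{equi} forces $\mathcal{C}$ transitive, contradicting $|\mathrm{Orb}(\mathcal{C})|=2$, so $\mathcal{C}_\mathrm{pt}\simeq\mathrm{sVec}$ and Proposition~\ref{eqqq} yields $\mathcal{C}\simeq\mathcal{I}\boxtimes\mathcal{T}$ with $\mathcal{I}$ Ising and $\mathcal{T}$ transitive, where $\mathcal{I}$ and $\mathcal{T}$ are coprime because $\mathrm{cond}(\mathcal{I})=16$ while $\mathrm{cond}(\mathcal{T})$ is a product of primes $\geq5$. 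Otherwise Proposition~\ref{firstwan} gives $\mathcal{C}\simeq\mathcal{C}_\mathrm{pt}\boxtimes\mathcal{T}$ with $\mathcal{T}$ transitive and $\mathcal{C}_\mathrm{pt}=\mathcal{C}(A,q')$ a pointed modular tensor category, and the remaining task is to show $|A|$ is prime and $\mathcal{C}_\mathrm{pt}$ is coprime to $\mathcal{T}$.

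To pin down the pointed factor, I would use that the Galois action on a Deligne product is coordinatewise (as in the computation preceding Note~\ref{cyclic}), so that $\mathcal{O}_{\mathbbm{1}\boxtimes\mathbbm{1}}=\{\mathbbm{1}\boxtimes X:X\in\mathcal{O}(\mathcal{T})\}$; hence for every nontrivial $g\in A$ the invertible object $g\boxtimes\mathbbm{1}$ lies in the unique second Galois orbit, so all of these objects are mutually Galois conjugate. Projecting to the first coordinate and using that in a pointed category $\hat\sigma(g)$ is always a generator of $\langle g\rangle$ (Example~\ref{point}), one gets $g'\in\langle g\rangle$ for all nontrivial $g,g'\in A$, which forces $A\cong\mathbb{Z}/q\mathbb{Z}$ for a prime $q$; thus $\mathcal{C}_\mathrm{pt}$ is pointed of prime dimension. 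If $q\in\{2,3\}$ then $\mathrm{cond}(\mathcal{C}_\mathrm{pt})\in\{4,3\}$ is automatically coprime to $\mathrm{cond}(\mathcal{T})$. If $q\geq5$ and $q\mid\mathrm{cond}(\mathcal{T})$, the classification of transitive categories \cite[Theorem II]{2020arXiv200701366N} lets us split $\mathcal{T}\simeq\mathcal{C}(\mathfrak{sl}_2,q-2)_\mathrm{ad}\boxtimes\mathcal{T}'$ with $\mathcal{T}'$ coprime to $q$, so $\mathcal{C}\simeq(\mathcal{C}_\mathrm{pt}\boxtimes\mathcal{C}(\mathfrak{sl}_2,q-2)_\mathrm{ad})\boxtimes\mathcal{T}'$ is a coprime Deligne product; since orbit counts multiply over coprime products (by the Chinese remainder theorem for cyclotomic Galois groups together with Lemma~\ref{lem1}), Note~\ref{cyclic} gives $|\mathrm{Orb}(\mathcal{C})|=\tfrac{q+1}{2}\cdot|\mathrm{Orb}(\mathcal{T}')|\geq3$, a contradiction. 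Hence $\mathcal{C}_\mathrm{pt}$ is coprime to $\mathcal{T}$, completing the forward direction.

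For the converse, suppose $\mathcal{C}\simeq\mathcal{D}\boxtimes\mathcal{T}$ with $\mathcal{T}$ transitive and $\mathcal{D}$ either pointed of prime dimension coprime to $\mathcal{T}$ or an Ising category. Then $|\mathrm{Orb}(\mathcal{D})|=2$ in both cases --- it is the number of divisors of a prime in the pointed case (as in the cyclic-group note following Example~\ref{point}), and Ising categories have the two orbits $\{\mathbbm{1},f\}$ and the singleton on the object of dimension $\sqrt2$ --- the pair $\mathcal{D},\mathcal{T}$ is coprime (automatic for Ising, of conductor $16$), and $\mathcal{D}_\mathrm{pt}\neq\mathrm{Vec}$ while $\mathcal{T}_\mathrm{pt}=\mathrm{Vec}$; by multiplicativity $|\mathrm{Orb}(\mathcal{C})|=|\mathrm{Orb}(\mathcal{D})|\cdot|\mathrm{Orb}(\mathcal{T})|=2$ and $\mathcal{C}_\mathrm{pt}=\mathcal{D}_\mathrm{pt}\neq\mathrm{Vec}$. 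The main obstacle is the coprimality claim in the non-symmetric case of the forward direction: every other step is a direct appeal to the preceding results, but excluding $q\mid\mathrm{cond}(\mathcal{T})$ for $q\geq5$ is precisely where the explicit value $|\mathrm{Orb}(\mathcal{C}(\mathbb{Z}/q\mathbb{Z},q')\boxtimes\mathcal{C}(\mathfrak{sl}_2,q-2)_\mathrm{ad})|=(q+1)/2$ of Note~\ref{cyclic} is indispensable.
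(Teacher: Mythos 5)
Your proposal is correct and follows the same route as the paper's proof: Proposition \ref{firstwan} splits into the symmetric case, where Lemma \ref{ad}, Proposition \ref{equi}, and Proposition \ref{eqqq} force $\mathcal{C}_\mathrm{pt}\simeq\mathrm{sVec}$ and then the Ising factorization, and the case $\mathcal{C}\simeq\mathcal{C}_\mathrm{pt}\boxtimes\mathcal{T}$. The only difference is that you explicitly supply the primality of $\dim(\mathcal{C}_\mathrm{pt})$ (via mutual Galois conjugacy of the nontrivial invertible objects, where the paper would invoke Theorem \ref{what}) and the coprimality of $\mathcal{C}_\mathrm{pt}$ and $\mathcal{T}$ (via the orbit count of Note \ref{cyclic}), details the paper's one-line proof leaves implicit in its citation of Propositions \ref{firstwan} and \ref{equi}.
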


\begin{proof}
If $|\mathrm{Orb}(\mathcal{C})|=2$ and $\mathcal{C}_\mathrm{pt}$ is nontrivial, then Propositions \ref{firstwan} and \ref{equi} imply that either $\mathcal{C}_\mathrm{pt}\simeq\mathrm{sVec}$, or $\mathcal{C}\simeq\mathcal{D}\boxtimes\mathcal{T}$ is an equivalence of modular tensor categories, where $\mathcal{T}$ is a transitive modular tensor category, and $\mathcal{D}$ is pointed of prime dimension coprime to the conductor of $\mathcal{T}$.  In the former case, Proposition \ref{eqqq} implies $\mathcal{C}\simeq\mathcal{I}\boxtimes\mathcal{T}$ where $\mathcal{I}$ is an Ising modular tensor category and $\mathcal{T}$ is a transitive modular tensor category.  The converse direction is trivial.
\end{proof}


\subsection{The unpointed case}

\begin{lemma}\label{nondeg}
Let $\mathcal{C}$ be an unpointed modular tensor category with $|\mathrm{Orb}(\mathcal{C})|=2$.  If $\mathcal{D}\subset\mathcal{C}$ is a fusion subcategory, then $\mathcal{D}$ is a modular tensor category.
\end{lemma}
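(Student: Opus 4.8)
The plan is to argue by contradiction, extracting from a hypothetical non-modular fusion subcategory a copy of $\mathrm{Rep}(G)$ for a perfect group $G$, and then showing that the two-orbit hypothesis cannot accommodate it. The statement is trivial for $\mathcal{D}=\mathrm{Vec}$ and $\mathcal{D}=\mathcal{C}$, so I would take $\mathcal{D}$ proper and nontrivial; since $\mathcal{D}$ inherits a braiding and a spherical structure it suffices to prove $\mathcal{D}$ nondegenerate. Assume not, and set $\mathcal{D}':=C_\mathcal{D}(\mathcal{D})=\mathcal{D}\cap C_\mathcal{C}(\mathcal{D})\neq\mathrm{Vec}$, the symmetric Müger center of $\mathcal{D}$. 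As $\mathcal{C}$ is unpointed it has no invertible object besides $\mathbbm{1}$, so $\mathcal{D}'$ contains no copy of $\mathrm{sVec}$ and is therefore Tannakian, $\mathcal{D}'\simeq\mathrm{Rep}(G)$; moreover $\mathrm{Rep}(G)$ has no nontrivial invertible object, i.e.\ $G$ admits no nontrivial one-dimensional representation, so $G$ is perfect, and $G\neq 1$ since $\mathcal{D}'\neq\mathrm{Vec}$ (hence $|G|\geq 60$ and $\mathrm{Rep}(G)$ has simple objects of at least two distinct integer dimensions).

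Next I would identify $C_\mathcal{C}(\mathcal{D}')$. Every simple object of $\mathcal{D}'=\mathrm{Rep}(G)$ has integer categorical dimension, so for $V,W\in\mathcal{O}(\mathcal{D}')$ Lemma~\ref{prelemminus1} (applied with the object $W$ in the role of $X$) shows that $W$ and $\hat{\sigma}(V)$ centralize one another for \emph{every} $\sigma\in\mathrm{Gal}(\mathbb{Q}(\zeta_N)/\mathbb{Q})$; consequently $\hat{\sigma}(V)\in C_\mathcal{C}(\mathcal{D}')$ for all such $\sigma$, that is $\mathcal{O}_V\subseteq\mathcal{O}(C_\mathcal{C}(\mathcal{D}'))$ for all $V\in\mathcal{O}(\mathcal{D}')$. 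Taking $V=\mathbbm{1}$ gives $\mathcal{O}_\mathbbm{1}\subseteq\mathcal{O}(C_\mathcal{C}(\mathcal{D}'))$. Since $\mathcal{D}'\neq\mathrm{Vec}$ and $\mathcal{C}$ is nondegenerate we have $C_\mathcal{C}(\mathcal{D}')\neq\mathcal{C}$, so the two-orbit hypothesis forces $\mathcal{O}(C_\mathcal{C}(\mathcal{D}'))=\mathcal{O}_\mathbbm{1}$.

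The core of the argument now mirrors Lemmas~\ref{venti}--\ref{medi} and Proposition~\ref{equi}, with $\mathcal{D}'$ playing the role of $\mathcal{C}_\mathrm{pt}$. Because $\dim(\hat\sigma(\mathbbm 1))^2=\dim(\mathcal{C})/\sigma(\dim(\mathcal{C}))$ and its inverse are both algebraic integers, every object of $\mathcal{O}_\mathbbm 1$, hence every simple object of $C_\mathcal{C}(\mathcal{D}')$, has dimension an algebraic unit. With $A$ the regular algebra of $\mathcal{D}'=\mathrm{Rep}(G)$, Lemma~\ref{venti} then makes every free module $A\otimes X$ with $X\in\mathcal{O}(C_\mathcal{C}(\mathcal{D}'))$ simple; since every simple object of the modular category $\mathcal{C}_A^0$ of local modules is such a free module, the dimension computation of Lemma~\ref{medi} shows each simple object of $\mathcal{C}_A^0$ has squared dimension $\dim(\mathcal{C}_A^0)/\sigma(\dim(\mathcal{C}_A^0))$, so every Galois orbit of $\mathcal{C}_A^0$ contains a pseudoinvertible object. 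By Theorem~\ref{what2}, $\mathcal{C}_A^0\simeq(\mathcal{C}_A^0)_\mathrm{pt}\boxtimes\mathcal{T}$ with $\mathcal{T}$ transitive, and as $\mathcal{C}$ is unpointed an invertible free module $A\otimes X$ forces $X=\mathbbm{1}$, so $(\mathcal{C}_A^0)_\mathrm{pt}=\mathrm{Vec}$ and $\mathcal{C}_A^0$ is transitive. Exactly as in Proposition~\ref{equi}, the braided $G$-action on $\mathcal{C}_A^0$ arising from the $G$-crossed structure on $\mathcal{C}_A$ factors through $\mathrm{Pic}(\mathcal{C}_A^0)$, which is trivial because $\mathcal{C}_A^0$ is a Deligne product of categories $\mathcal{C}(\mathfrak{sl}_2,p-2)_\mathrm{ad}$ with trivial Picard group; hence that action is trivial, $C_\mathcal{C}(\mathcal{D}')\simeq(\mathcal{C}_A^0)^G\simeq\mathcal{C}_A^0\boxtimes\mathrm{Rep}(G)$, and therefore $\mathcal{C}\simeq\mathcal{C}_A^0\boxtimes\mathcal{M}$ by \cite[Proposition 2.2]{DMNO}, where $\mathcal{M}:=C_\mathcal{C}(\mathcal{C}_A^0)$ is modular of Frobenius--Perron dimension $|G|^2$ and contains $\mathrm{Rep}(G)\simeq\mathcal{D}'$ as a Lagrangian Tannakian subcategory, so $\mathcal{M}\simeq\mathcal{Z}(\mathrm{Vec}_G^\omega)$ for some $\omega$ and in particular $\mathcal{M}$ is integral.

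To conclude, I would count Galois orbits of $\mathcal{M}$. Being integral, $\mathcal{M}$ has $\mathcal{O}_\mathbbm{1}=\{\mathbbm{1}\}$ by Lemma~\ref{lem1}, each of its Galois orbits consists of objects of a single (integer) dimension, and all its fusion subcategories are Galois-closed by Theorem~\ref{prelem0}; hence the Galois-closed proper subcategory $\mathrm{Rep}(G)$, which has objects of at least two distinct dimensions, accounts for at least two Galois orbits of $\mathcal{M}$, and there is at least one further orbit outside it, so $|\mathrm{Orb}(\mathcal{M})|\geq 3$. Since the Galois action on a Deligne product is diagonal in the factors, the projection to $\mathcal{M}$ sends Galois orbits of $\mathcal{C}\simeq\mathcal{C}_A^0\boxtimes\mathcal{M}$ onto Galois orbits of $\mathcal{M}$, giving $|\mathrm{Orb}(\mathcal{C})|\geq|\mathrm{Orb}(\mathcal{M})|\geq 3$, contrary to hypothesis. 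The main obstacle is the third paragraph: faithfully reusing the machinery of Lemmas~\ref{venti}--\ref{medi} and Proposition~\ref{equi} with $\mathcal{D}'$ in place of $\mathcal{C}_\mathrm{pt}$, in particular checking that the simple local $A$-modules are precisely the free modules on simples of $C_\mathcal{C}(\mathcal{D}')$ and that the resulting $G$-action and its triviality are set up correctly.
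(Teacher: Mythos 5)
Your argument breaks at the very first reduction: from ``$\mathcal{D}'$ contains no copy of $\mathrm{sVec}$'' you conclude that ``$\mathcal{D}'$ is therefore Tannakian,'' and everything downstream (the regular algebra $A$, the category of local modules $\mathcal{C}_A^0$, the $G$-action and Picard computation, the Lagrangian subcategory of $\mathcal{M}$) depends on this. The implication is false for symmetric fusion categories. By Deligne's theorem $\mathcal{D}'\simeq\mathrm{Rep}(G,z)$ for some central $z$ with $z^2=1$, and $\mathcal{D}'$ is Tannakian if and only if $z=1$, whereas containing $\mathrm{sVec}$ requires a one-dimensional character of $G$ taking the value $-1$ on $z$. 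When $G$ is perfect it has no nontrivial one-dimensional representations at all, so $\mathrm{Rep}(G,z)$ has no nontrivial invertible objects (hence no $\mathrm{sVec}$) for \emph{any} $z$, yet it is non-Tannakian whenever $z\neq1$; for instance $\mathrm{Rep}(\mathrm{SL}(2,5),-I)$ is exactly the kind of unpointed, non-Tannakian symmetric category your hypotheses leave open, and your proof never excludes it. A secondary worry, even granting Tannakianity: Lemma \ref{venti} is proved in the paper by writing subobjects of $A\otimes X$ as $\delta\otimes X$ with $\delta$ a sum of invertibles, which uses that the Tannakian subcategory there is pointed; applying it to the regular algebra of $\mathrm{Rep}(G)$ for a perfect (hence nonabelian) $G$ needs separate justification.

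The frustrating part is that your second paragraph already contains all the ingredients for a complete proof, making the entire third paragraph unnecessary. You establish $\mathcal{O}(\mathcal{D}')\subset\mathcal{O}(C_\mathcal{C}(\mathcal{D}'))=\mathcal{O}_\mathbbm{1}$, so every $V\in\mathcal{O}(\mathcal{D}')$ satisfies $\dim(V)^2=\dim(\mathcal{C})/\sigma(\dim(\mathcal{C}))$ for some $\sigma$; this quantity is an algebraic unit and, since the symmetric category $\mathcal{D}'$ is integral and hence pseudounitary, also a positive integer, so it equals $1$. Then $\mathrm{FPdim}(V)=|{\dim(V)}|=1$ and unpointedness forces $V=\mathbbm{1}$, i.e.\ $\mathcal{D}'=\mathrm{Vec}$ --- no Tannakian dichotomy required. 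The paper's own proof is in the same elementary spirit: Galois-conjugate simples of the integral category $\mathcal{D}'$ have equal Frobenius--Perron dimension, so the two-orbit hypothesis leaves at most one Frobenius--Perron dimension $d>1$ among nontrivial simples of $\mathcal{D}'$, and decomposing $X\otimes X^\ast$ gives $d^2=1+nd$, impossible for an integer $d>1$. Neither route needs de-equivariantization, Picard groups, or the classification of transitive categories.
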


\begin{proof}
Let $\mathcal{D}\subset\mathcal{C}$ be a fusion subcategory.  Then $\mathcal{D}$ inherits the braiding of $\mathcal{C}$ and we may consider the symmetric center $\mathcal{D}'$.  We know that $\mathcal{D}'$ is integral, hence pseudounitary, thus $\mathrm{FPdim}(X)=\pm\dim(X)$ for all $X\in\mathcal{O}(\mathcal{D}')$.  If $X,Y\in\mathcal{O}(\mathcal{D}')$ are Galois conjugate (by the Galois action on $\mathcal{C}$), then there exists $\sigma\in\mathrm{Gal}(\overline{\mathbb{Q}}/\mathbb{Q})$ such that
\begin{equation}
\dim(X)^2=\sigma(\dim(Y)^2)\dim(\mathcal{C})/\sigma(\dim(\mathcal{C}))=\dim(Y)^2\dim(\mathcal{C})/\sigma(\dim(\mathcal{C})).
\end{equation}
Thus $\dim(X)^2/\dim(Y)^2=\dim(\mathcal{C})/\sigma(\dim(\mathcal{C}))=1$ as it is a positive integer unit \cite[Corollary 1.4]{MR2576705}.  Hence $\dim(X)=\pm\dim(Y)$ and moreover $\mathrm{FPdim}(X)=\mathrm{FPdim}(Y)$.  Thus there are at most two distinct Frobenius-Perron dimensions of simple objects in $\mathcal{D}'$.
\par Let $X\in\mathcal{O}(\mathcal{D}')\setminus\{\mathbbm{1}\}$.  Then since $\mathbbm{1}$ is the unique simple object of Frobenius-Perron dimension 1, by the above argument,
\begin{equation}
\mathrm{FPdim}(X)^2=\mathrm{FPdim}(X\otimes X^\ast)=1+n\mathrm{FPdim}(X)
\end{equation}
for some $n\in\mathbb{Z}_{\geq1}$.  This would imply $1\equiv0\pmod{\mathrm{FPdim}(X)}$, therefore no such $X$ exists.  Moreover $\mathcal{D}'\simeq\mathrm{Vec}$.
\end{proof}

\begin{proposition}\label{unpointedprop}
Let $\mathcal{C}$ be a modular tensor category.  Then $|\mathrm{Orb}(\mathcal{C})|=2$ and $\mathcal{C}_\mathrm{pt}$ is trivial if and only if $\mathcal{C}\simeq\mathcal{D}\boxtimes\mathcal{T}$ is an equivalence of modular tensor categories, where $\mathcal{T}$ is a transitive modular tensor category, and $\mathcal{D}$ is either a simple modular tensor category whose conductor is coprime to that of $\mathcal{T}$, or $\mathrm{rank}(\mathcal{T})>2$ and $\mathcal{D}$ is braided equivalent to $\mathcal{F}_1\boxtimes\mathcal{F}_2$ where $\mathcal{F}_1,\mathcal{F}_2$ are any of the Fibonacci modular tensor categories.
\end{proposition}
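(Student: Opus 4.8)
The plan is to combine the prime factorization of \cite[Proposition 2.2]{DMNO} with Lemma \ref{nondeg} and the classification of transitive modular tensor categories \cite[Theorem II]{2020arXiv200701366N}. The converse direction is routine: in either listed case the Fibonacci factors and the transitive factor $\mathcal{T}$ are unpointed, so $(\mathcal{D}\boxtimes\mathcal{T})_\mathrm{pt}=\mathrm{Vec}$; and since the Galois group of a compositum of coprime cyclotomic fields is the direct product of the individual Galois groups, $|\mathrm{Orb}(\mathcal{D}\boxtimes\mathcal{T})|=|\mathrm{Orb}(\mathcal{D})|\cdot|\mathrm{Orb}(\mathcal{T})|=|\mathrm{Orb}(\mathcal{D})|$, which equals $2$ when $\mathcal{D}$ is a simple modular tensor category with two orbits and also when $\mathcal{D}=\mathcal{F}_1\boxtimes\mathcal{F}_2$ (by Example \ref{fib}). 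The hypothesis $\mathrm{rank}(\mathcal{T})>2$ in the second case is exactly what prevents $\mathcal{T}$ from being a third Fibonacci category, for which $\mathrm{Fib}^{\boxtimes 3}$ already has four Galois orbits.

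For the forward direction, suppose $|\mathrm{Orb}(\mathcal{C})|=2$ and $\mathcal{C}_\mathrm{pt}=\mathrm{Vec}$. If $\mathcal{C}$ is simple there is nothing to prove: take $\mathcal{D}=\mathcal{C}$ and $\mathcal{T}=\mathrm{Vec}$. Otherwise $\mathcal{C}$ has a proper nontrivial fusion subcategory, which is nondegenerate by Lemma \ref{nondeg}; in fact that lemma forces \emph{every} fusion subcategory of $\mathcal{C}$ to be modular, so in the prime factorization $\mathcal{C}\simeq\boxtimes_{i}\mathcal{P}_i$ of \cite[Proposition 2.2]{DMNO} each $\mathcal{P}_i$ is a fusion subcategory which is simultaneously modular and prime, hence a simple, necessarily unpointed, modular tensor category. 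The next ingredient is an orbit-counting observation: if $\mathcal{A},\mathcal{B}$ are fusion subcategories with $\mathcal{A}\boxtimes\mathcal{B}\subseteq\mathcal{C}$, then two simple objects of $\mathcal{A}\boxtimes\mathcal{B}$ lying in a single Galois orbit of $\mathcal{C}$ already lie in a single Galois orbit of $\mathcal{A}\boxtimes\mathcal{B}$, since the relevant Verlinde eigenvalues are computed from a submatrix of the $s$-matrix; consequently $|\mathrm{Orb}(\mathcal{A})|,|\mathrm{Orb}(\mathcal{B})|\le|\mathrm{Orb}(\mathcal{C})|$, and if moreover both $\mathcal{A}$ and $\mathcal{B}$ have more than one orbit then, choosing $X\in\mathcal{O}(\mathcal{A})$ and $Y\in\mathcal{O}(\mathcal{B})$ outside the orbits of the respective units, the objects $\mathbbm{1}\boxtimes\mathbbm{1}$, $X\boxtimes\mathbbm{1}$ and $\mathbbm{1}\boxtimes Y$ occupy three distinct Galois orbits of $\mathcal{A}\boxtimes\mathcal{B}$, hence of $\mathcal{C}$. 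Applied to the Deligne subproducts $\mathcal{P}_i\boxtimes\mathcal{P}_j$, this shows at most one of the $\mathcal{P}_i$ is non-transitive.

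If every $\mathcal{P}_i$ is transitive, I would group them by the prime order of their $\mathfrak{t}$-matrices; factors with distinct such primes are coprime, and extending the computation of Example \ref{transdeligne} gives that a group of $n$ Galois conjugates of $\mathcal{C}(\mathfrak{sl}_2,p-2)_\mathrm{ad}$ contributes $((p-1)/2)^{n-1}$ orbits, so multiplicativity over coprime groups turns $|\mathrm{Orb}(\mathcal{C})|=2$ into the equation $\prod_p ((p-1)/2)^{n_p-1}=2$. Since every factor here is an integer at least $1$ with base at least $2$, this forces exactly two factors of order $5$, that is, two Fibonacci categories, with all other primes occurring at most once; hence $\mathcal{C}\simeq\mathcal{F}_1\boxtimes\mathcal{F}_2\boxtimes\mathcal{T}$ with $\mathcal{T}$ the product of the remaining pairwise-coprime factors, which is transitive and either trivial or of rank $>2$. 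If instead exactly one factor $\mathcal{D}:=\mathcal{P}_{i_0}$ is non-transitive, put $\mathcal{E}:=\boxtimes_{i\ne i_0}\mathcal{P}_i$; then $\mathcal{E}$ is nontrivial, as $\mathcal{C}$ is not simple, and must be transitive, since otherwise $\mathcal{D}$ and $\mathcal{E}$ would be two non-transitive subcategories with $\mathcal{D}\boxtimes\mathcal{E}=\mathcal{C}$, contradicting the three-orbit bound above.

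The remaining and, I expect, hardest point is to show $\mathcal{D}$ and $\mathcal{E}$ are coprime, so that $\mathcal{C}\simeq\mathcal{D}\boxtimes\mathcal{E}$ is the asserted decomposition. If a prime $q$ divided both the conductor of $\mathcal{D}$ and the order of the $\mathfrak{t}$-matrix of one of the transitive factors $\mathcal{C}(\mathfrak{sl}_2,q-2)_\mathrm{ad}$ occurring in $\mathcal{E}$, then $\mathcal{D}\boxtimes\mathcal{C}(\mathfrak{sl}_2,q-2)_\mathrm{ad}$ would be a subcategory of $\mathcal{C}$ with exactly two orbits; since that factor is transitive, \cite[Lemma 2.1(ii)]{2020arXiv200701366N} identifies its orbits with those of $\mathrm{Gal}(\overline{\mathbb{Q}}/\mathbb{Q}(\zeta_q)^+)$ acting on $\mathcal{O}(\mathcal{D})$, so having only two of them forces the field of Verlinde eigenvalues $\mathbb{L}_Y$ of every simple object $Y$ of $\mathcal{D}$ to meet $\mathbb{Q}(\zeta_q)^+$ trivially. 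I would rule this out by showing that a simple modular tensor category whose conductor is divisible by $q$ must have some Verlinde eigenvalue generating a nontrivial subfield of $\mathbb{Q}(\zeta_q)$, which I anticipate requires an explicit analysis of the $\mathfrak{t}$-spectra of the irreducible representations of $\mathrm{SL}(2,\mathbb{Z}/q^\lambda\mathbb{Z})$ recorded in Appendix \ref{tables}, in the same spirit as the argument of Subsection \ref{svec}.
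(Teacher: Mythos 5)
Most of your argument coincides with the paper's: Lemma \ref{nondeg} plus \cite[Proposition 2.2]{DMNO} to factor $\mathcal{C}$ into simple modular primes, an orbit-count on Deligne subproducts to show at most one prime is non-transitive, and Example \ref{transdeligne} to force $\mathcal{F}_1\boxtimes\mathcal{F}_2$ in the all-transitive case. Your orbit-counting observation (three distinct orbits $\mathbbm{1}\boxtimes\mathbbm{1}$, $X\boxtimes\mathbbm{1}$, $\mathbbm{1}\boxtimes Y$, and the injection of orbits of a modular subcategory into orbits of $\mathcal{C}$) is a correct self-contained substitute for the paper's citation of \cite[Lemma 2.1(iii)]{2020arXiv200701366N}, which is where the paper gets both the ``at most one non-transitive factor'' bound and, together with Example \ref{transdeligne}, the coprimality statements.

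The genuine gap is the final step, which you explicitly leave unexecuted: showing the conductor of the non-transitive simple factor $\mathcal{D}$ is coprime to that of $\mathcal{T}$. Your reduction is right as far as it goes --- two orbits in $\mathcal{D}\boxtimes\mathcal{C}(\mathfrak{sl}_2,q-2)_\mathrm{ad}$ forces $\mathbb{L}_Y\cap\mathbb{Q}(\zeta_q)^+=\mathbb{Q}$ for \emph{every} $Y\in\mathcal{O}(\mathcal{D})$ --- but the contradiction you propose to extract is problematic. The Galois symmetry $\sigma^2(\tilde{t}_X)=\tilde{t}_{\hat{\sigma}(X)}$ only ties the conductor of $\mathcal{D}$ to the \emph{compositum} of the fields $\mathbb{L}_Y$, and a compositum of fields each meeting $\mathbb{Q}(\zeta_q)^+$ trivially can still meet it nontrivially (compare $\mathbb{Q}(\sqrt{2})$ and $\mathbb{Q}(\sqrt{10})$ against $\mathbb{Q}(\sqrt{5})$). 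So ``conductor divisible by $q$ implies \emph{some single} $\mathbb{L}_Y$ meets $\mathbb{Q}(\zeta_q)^+$ nontrivially'' is exactly the statement you would need, it is not obviously true, and the $\mathfrak{t}$-spectra tables of Appendix \ref{tables} do not address it directly since they record $\mathfrak{t}$-eigenvalues rather than Verlinde fields. The paper does not use the Appendix here at all; it disposes of coprimality by the orbit-multiplicativity statement of \cite[Lemma 2.1(iii)]{2020arXiv200701366N} applied to $\mathcal{E}\boxtimes C_{\mathcal{C}}(\mathcal{E})$. Until this step is closed, your proof of the forward direction establishes only $\mathcal{C}\simeq\mathcal{D}\boxtimes\mathcal{T}$ with $\mathcal{D}$ simple and $\mathcal{T}$ transitive, not the coprimality asserted in the statement.
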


\begin{proof}
For the forward direction, Lemma \ref{nondeg} implies the unique factorization (up to braided equivalence) given by \cite[Proposition 2.2]{DMNO} consists of simple factors.  Lemma 2.1(iii) of \cite{2020arXiv200701366N} then implies at most one of these factors, $\mathcal{E}$, has 2 Galois orbits of simple objects.  If $\mathcal{E}$ exists, then the remaining factors are transitive and coprime, or else $|\mathrm{Orb}(C_\mathcal{C}(\mathcal{E}))|>1$ by Example \ref{transdeligne} which would imply $|\mathrm{Orb}(\mathcal{C})|\geq4$ by \cite[Lemma 2.1(iii)]{2020arXiv200701366N}.  In this case $\mathcal{E}=\mathcal{D}$ proves our claim.  If $\mathcal{E}$ does not exist, then all simple factors of $\mathcal{C}$ are transitive, and at least 2 are \emph{not} coprime or else $\mathcal{C}$ would be transitive.  Example \ref{transdeligne} implies that $\mathcal{D}\simeq\mathcal{F}_1\boxtimes\mathcal{F}_2$ for some Fibonacci modular tensor categories $\mathcal{F}_1,\mathcal{F}_2$, is the only product of non-coprime simple transitive modular tensor categories with 2 Galois orbits of simple objects.  The converse direction is trivial.
\end{proof}


\begin{appendices}
\section{$\mathfrak{t}$-spectra}\label{tables}

Here we include the classification of irreducible $\mathrm{SL}(2,\mathbb{Z}/N\mathbb{Z})$ representations for $N\in\mathbb{Z}_{\geq2}$.  This classification first appeared in \cite[Section 9]{MR444788} but to justify the proofs of Section \ref{svec}, we have contributed the $\mathfrak{t}$-eigenvalues of each representation as a set as well as the number of square Galois orbits of roots of unity in this set.  It is not necessary for our purposes to include multiplicities of eigenvalues and doing so would increase the length of our exposition greatly.

\par In the following tables we denote the set of all $n$-th roots of unity by $\Phi_n$, and the set of all primitive $n$-th roots of unity by $\Gamma_n$, for brevity.  We denote the square Galois orbit of $\zeta_{p^\lambda}^r$ by $\Gamma_{p^\lambda}^r$ for any $r\in\mathbb{Z}$.  Any squared Galois orbit of primitive $p^\lambda$th roots of unity for prime $p$ and $\lambda\in\mathbb{Z}_{\geq1}$ is the orbit of $\zeta_{p^\lambda}^r$ where $r$ is a quadratic (non)-residue modulo $p$ when $p$ is odd, or the orbit of $\zeta_{2^\lambda}^r$ where $r\in\{1,3,5,7\}$.  In particular, $\Phi_1=\Gamma_1=\{1\}$, $\Gamma_2=\{-1\}$, $\Gamma_{2^2}=\Gamma_{2^2}^1\cup\Gamma_{2^2}^3=\{\pm i\}$, $\Gamma_{2^\lambda}=\Gamma_{2^\lambda}^1\cup\Gamma_{2^\lambda}^3\cup\Gamma_{2^\lambda}^5\cup\Gamma_{2^\lambda}^7$ when $\lambda\geq3$, and $\Gamma_{p^\lambda}=\Gamma_{p^\lambda}^r\cup\Gamma_{p^\lambda}^s$ for odd $p$, where $r$ is a quadratic residue and $s$ is a quadratic non-residue modulo $p$.  The sets $\Gamma^{r_1}_{{p_1}^{\lambda_1}}$ and $\Gamma^{r_2}_{{p_2}^{\lambda_2}}$ are disjoint unless $r_1=r_2$, $p_1=p_2$ and $\lambda_1=\lambda_2$ (similarly for $\Gamma_{{p_1}^{\lambda_1}}$ and $\Gamma_{{p_2}^{\lambda_2}}$), hence all the unions in the following tables are disjoint unions.
\par Denote the one-dimensional modular group representations whose set of $\mathfrak{t}$-eigenvalues are $\Gamma_1=\{1\}$, $\Gamma_2=\{-1\}$, $\Gamma^3_{2^2}=\{-i\}$, $\Gamma^1_{2^4}=\{i\}$ by $C_1,C_2,C_3,C_4$, respectively.  If a representation has distinct $\mathfrak{t}$-eigenvalues then we say it is ``multiplicity-free'', denoted m.f. in the tables.  We have abbreviated the number of square Galois orbits in the set of $\mathfrak{t}$-eigenvalues by $|\mathrm{Gal}|$; only a lower bound is given in exactly two cases to save space in an already lengthy exposition.  Unless otherwise noted in column 2, $\chi$ will represent an arbitrary character in $\mathfrak{B}$, the set of characters of the automorphism group of the binary quadratic module associated to the representation \cite[Section 2]{MR444788}.

\[
\begin{array}{llllll}
\multicolumn{6}{l}{\textbf{Table 1. }\text{Irreducible representations of }\mathrm{SL}(2,\mathbb{Z}/p\mathbb{Z})\text{ for }p\neq2} \\
\hline \text{type of rep.} & & \text{dim} & \mathfrak{t}-\text{spectrum}& \text{m.f.}& |\text{Gal}|\\\hline 
D_1(\chi) &  & p+1 & \Phi_p & \mathrm{no} &3 \\
N_1(\chi) & & p-1 & \Gamma_p & \mathrm{yes} &2 \\
R_1(r,\chi_1) & \left(\dfrac{r}{p}\right)=\pm1 & \frac{1}{2}(p+1) & \Gamma_1\cup\Gamma_p^r  & \mathrm{yes} &2 \\
R_1(r,\chi_{-1}) & \left(\dfrac{r}{p}\right)=\pm1 & \frac{1}{2}(p-1) & \Gamma_p^r & \mathrm{yes} &1 \\
N_1(\chi_1) &  & p & \Phi_p & \mathrm{yes} &3 \\
\hline
\end{array}
\]

\[
\begin{array}{llllll}
\multicolumn{6}{l}{\textbf{Table 2. }\text{Irreducible representations of }\mathrm{SL}(2,\mathbb{Z}/p^\lambda\mathbb{Z})\text{ for }p\neq2,\lambda>1, 1\leq\sigma<\lambda,\text{ and }\left(\dfrac{r}{p}\right),\left(\dfrac{t}{p}\right)=\pm1} \\
\hline \text{type of rep.} & & \text{dim} & \mathfrak{t}-\text{spectrum}& \text{m.f.} & |\text{Gal}| \\\hline 
D_\lambda(\chi) &  & (p+1)p^{\lambda-1} & \Phi_{p^\lambda} & \mathrm{no} & 2\lambda+1 \\
N_\lambda(\chi) &  & (p-1)p^{\lambda-1} &\Gamma_{p^\lambda} & \mathrm{yes} & 2 \\
R_\lambda^\sigma(r,t,\chi) &  & \frac{1}{2}(p^2-1)p^{\lambda-2}& \Gamma^r_{p^\lambda}\cup\left\{\zeta_{p^\lambda}^{r(x^2+p^\sigma ty^2)}:x,y\in\mathbb{Z},p\nmid y,p\mid x\right\}  &  \left\{\begin{array}{ccc}\text{yes} & : & \sigma=1 \\ \text{no} & : &\mathrm{else}\end{array}\right. & \geq\sigma+1\\
R_\lambda(r,\chi_\pm)_1 & &\frac{1}{2}(p^2-1)p^{\lambda-2} & \Gamma_{p^\lambda}^r\cup\Phi^r_{p^{\lambda-2}} & \left\{\begin{array}{ccc}\text{yes} & : & \lambda=2, \\ & & p=3 \\ \text{no} & : & \text{else}\end{array}\right. & \lambda \\
\hline
\end{array}
\]

\newpage

\[
\begin{array}{llllll}
\multicolumn{6}{l}{\textbf{Table 3. }\text{Irreducible representations of }\mathrm{SL}_2(2,\mathbb{Z}/2\mathbb{Z})} \\
\hline \text{type of rep.} & & \text{dim} & \mathfrak{t}-\text{spectrum}& \text{m.f.} & |\text{Gal}| \\\hline 
C_2=N_1(\chi) &  & 1 & \Gamma_2 & \text{yes} &1 \\
N_1(\chi_1) &  & 2 & \Phi_2 & \text{yes}  & 2\\
\hline
\end{array}
\]

\[
\begin{array}{llllll}
\multicolumn{6}{l}{\textbf{Table 4. }\text{Irreducible representations of }\mathrm{SL}_2(2,\mathbb{Z}/4\mathbb{Z})} \\
\hline \text{type of rep.} & & \text{dim} & \mathfrak{t}-\text{spectrum}& \text{m.f.} &|\text{Gal}|\\\hline 
R_2^0(1,1,\chi_1) &  & 3 & \Phi_2\cup\Gamma^1_{2^2} & \text{yes} & 3 \\
R_2^0(3,1,\chi_1) &  & 3 & \Phi_2\cup\Gamma^3_{2^2} & \text{yes} & 3 \\
R_2^0(1,3)_1 &  & 3 & \Gamma_1\cup\Gamma_{2^2} & \text{yes} & 3 \\
C_2\otimes R_2^0(1,3)_1 &  & 3 & \Gamma_2\cup\Gamma_{2^2} & \text{yes} & 3 \\
N_2(\chi) & \chi\not\equiv1 & 2 & \Gamma_{2^2} & \text{yes} & 2 \\
C_3=R_2^0(3,1,\chi) & \chi\not\equiv1 & 1 & \Gamma^3_{2^2} & \text{yes} & 1 \\
C_4=R_2^0(1,1,\chi) & \chi\not\equiv1 & 1 & \Gamma^1_{2^2} & \text{yes} &1  \\
\hline
\end{array}
\]

\[
\begin{array}{llllll}
\multicolumn{6}{l}{\textbf{Table 5. }\text{Irreducible representations of }\mathrm{SL}_2(2,\mathbb{Z}/8\mathbb{Z})} \\
\hline \text{type of rep.} & & \text{dim} & \mathfrak{t}-\text{spectrum}&\text{m.f.}& |\text{Gal}| \\\hline 
R_3^1(r,t,\chi_1) &  & 6 & \Gamma_{2^2}\cup\left\{\begin{array}{ccc}\Gamma^1_{2^3}\cup\Gamma^3_{2^3} & : & r=1,t=1 \\ \Gamma^1_{2^3}\cup\Gamma^7_{2^3} & : & r=1,t=3\\\Gamma^3_{2^3}\cup\Gamma^5_{2^3} & : & r=3,t=3 \\ \Gamma^5_{2^3}\cup\Gamma^7_{2^3} & : & r=5,t=1\end{array}\right. & \text{yes} & 4 \\
R_3^0(1,3,\chi_1)_1 &  & 6 & \Phi_2\cup\Gamma_{2^3}& \text{yes} & 6\\
C_3\otimes R_3^0(1,3,\chi_1)_1 & & 6 & \Gamma_{2^2}\cup\Gamma_{2^3}& \text{yes}  & 6\\
N_3(\chi) & \chi^2\not\equiv1 & 4 & \Gamma^1_{2^3}\cup\Gamma^3_{2^3}\cup\Gamma^5_{2^3}\cup\Gamma^7_{2^3} & \text{yes}&4\\
C_j\otimes N_3(\chi)_+ & j=1,2,3,4 & 2 & \left\{\begin{array}{ccc} \Gamma_{2^3}^3\cup\Gamma_{2^3}^5& : & j=1 \\ \Gamma_{2^3}^1\cup\Gamma_{2^3}^7 & : & j=2 \\\Gamma_{2^3}^1\cup\Gamma_{2^3}^3& : & j=3 \\\Gamma_{2^3}^5\cup\Gamma_{2^3}^7& : & j=4 \end{array}\right. & \text{yes} & 2\\
C_j\otimes R_3^0(1,3,\chi)_+ & \chi\not\equiv1,j=1,2,3,4 & 3 & \left\{\begin{array}{ccc}\Gamma_2\cup\Gamma^1_{2^3}\cup\Gamma^5_{2^3} & : & j=1\\ \Gamma_1\cup\Gamma^1_{2^3}\cup\Gamma^5_{2^3}& : & j=2\\ \Gamma^1_{2^2}\cup\Gamma^3_{2^3}\cup\Gamma^7_{2^3}& : & j=3\\ \Gamma^3_{2^2}\cup\Gamma^3_{2^3}\cup\Gamma^7_{2^3}& : & j=4\end{array}\right. & \text{yes} & 3 \\
C_j\otimes R_3^0(1,3,\chi)_- & \chi\not\equiv1,j=1,2,3,4 & 3 &\left\{\begin{array}{ccc}\Gamma_2\cup\Gamma^3_{2^3}\cup\Gamma^7_{2^3} & : & j=1\\ \Gamma_1\cup\Gamma^3_{2^3}\cup\Gamma^7_{2^3}& : & j=2\\ \Gamma^1_{2^2}\cup\Gamma^1_{2^3}\cup\Gamma^5_{2^3}& : & j=3\\ \Gamma^3_{2^2}\cup\Gamma^1_{2^3}\cup\Gamma^5_{2^3}& : & j=4\end{array}\right.  & \text{yes} & 3 \\\hline
\end{array}
\]

\begin{equation*}
\begin{array}{llllll}
\multicolumn{6}{l}{\textbf{Table 6. }\text{Irreducible representations of }\mathrm{SL}_2(2,\mathbb{Z}/16\mathbb{Z})}\\
\hline \text{type of rep.} & & \text{dim}& \mathfrak{t}-\text{spectrum}& \text{m.f.} &|\text{Gal}|\\\hline 
D_4(\chi) &  & 24 &\Phi_{2^4} &\text{no} &12 \\
N_4(\chi) &  & 8 &\Gamma_{2^4} &\text{yes} &4 \\
R_4^0(r,t,\chi) & \chi\not\equiv1;r=1,3;t=1,5 & 6 &\Gamma_{2^4}^r\cup\Gamma_{2^4}^{5r}\cup\left\{\begin{array}{ccc}\Gamma^r_{2^3}\cup\Gamma^{5r}_{2^3} &:&t=1 \\ \Gamma^{3r}_{2^3}\cup\Gamma^{7r}_{2^3} &:&t=5\end{array}\right. & \text{yes} & 4\\
C_j\otimes R_4^0(1,1,\chi)_+ & \chi^2\equiv1;j=1,2,3,4 & 3 &\left\{\begin{array}{ccc}\Gamma_{2^3}^5\cup\Gamma_{2^4}^1 & : & j=1 \\ \Gamma_{2^3}^1\cup\Gamma_{2^4}^1 & : & j=2 \\ \Gamma_{2^3}^3\cup\Gamma_{2^4}^5& : & j=3 \\\Gamma_{2^3}^7\cup\Gamma_{2^4}^5 & : & j=4\end{array}\right. &\text{yes} &2 \\
C_j\otimes R_4^0(1,1,\chi)_- & \chi^2\equiv1;j=1,2,3,4 & 3 & \left\{\begin{array}{ccc}\Gamma_{2^3}^1\cup\Gamma_{2^4}^5 & : & j=1 \\ \Gamma_{2^3}^5\cup\Gamma_{2^4}^5 & : & j=2 \\\Gamma_{2^3}^7\cup\Gamma_{2^4}^1 & : & j=3 \\\Gamma_{2^3}^3\cup\Gamma_{2^4}^1 & : & j=4\end{array}\right. &\text{yes} &2 \\
C_j\otimes R_4^0(3,1,\chi)_+ & \chi^2\equiv1;j=1,2,3,4 & 3 & \left\{\begin{array}{ccc}\Gamma_{2^3}^7\cup\Gamma_{2^4}^3 & : & j=1 \\ \Gamma_{2^3}^3\cup\Gamma_{2^4}^3 & : & j=2 \\ \Gamma_{2^3}^5\cup\Gamma_{2^4}^7& : & j=3 \\\Gamma_{2^3}^1\cup\Gamma_{2^4}^7 & : & j=4\end{array}\right. &\text{yes} &2 \\
C_j\otimes R_4^0(3,1,\chi)_- & \chi^2\equiv1;j=1,2,3,4 & 3 &\left\{\begin{array}{ccc}\Gamma_{2^3}^3\cup\Gamma_{2^4}^7 & : & j=1 \\ \Gamma_{2^3}^7\cup\Gamma_{2^4}^7 & : & j=2 \\ \Gamma_{2^3}^1\cup\Gamma_{2^4}^3& : & j=3 \\ \Gamma_{2^3}^5\cup\Gamma_{2^4}^3& : & j=4\end{array}\right. &\text{yes} &2 \\
R_4^0(1,t,\chi)_\pm & t=3,7 & 6 & \Gamma^1_{2^4}\cup\Gamma_{2^4}^7\cup\left\{\begin{array}{ccc} \Gamma_{2^3}^3\cup\Gamma_{2^3}^5 & : & t=3 \\ \Gamma_{2^3}^1\cup\Gamma_{2^3}^7 & : & t=7 \end{array}\right. & \text{yes} &  4 \\
R_4^2(r,t,\chi) & \chi\not\equiv1;r,t\in\{1,3\} & 6 & \left\{\begin{array}{ccc} \Gamma_2\cup\Gamma^1_{2^2}\cup\Gamma_{2^4}^1\cup\Gamma_{2^4}^5 & : & r=1,t=1 \\\Gamma_1\cup\Gamma^3_{2^2}\cup\Gamma_{2^4}^1\cup\Gamma_{2^4}^5 & : & r=1,t=3 \\\Gamma_2\cup\Gamma^3_{2^2}\cup\Gamma_{2^4}^3\cup\Gamma_{2^4}^7 & : & r=3,t=1 \\\Gamma_1\cup\Gamma^1_{2^2}\cup\Gamma_{2^4}^3\cup\Gamma_{2^4}^7 & : & r=3,t=3 \\\end{array}\right. &\text{yes} & 4 \\
C_2\otimes R_4^2(r,3,\chi) & \chi\not\equiv1;r=1,3 & 6 & \left\{\begin{array}{ccc} \Gamma_2\cup\Gamma_{2^2}^1\cup\Gamma_{2^4}^1\cup\Gamma_{2^4}^5 & : & r=1 \\ \Gamma_2\cup\Gamma_{2^2}^3\cup\Gamma_{2^4}^3\cup\Gamma_{2^4}^7 & : & r=3 \end{array}\right. & \text{yes} & 4 \\
R_4^2(r,3,\chi_1)_1 & r=1,3 & 6 & \Gamma_1\cup\Gamma_{2^2}^3\cup\Gamma_{2^4}^1\cup\Gamma_{2^4}^5 &\text{yes} & 4 \\
N_3(\chi)_+\otimes R_4^0(1,7,\psi)_+ & \chi^2\equiv1;\psi\not\equiv1; & 12 & \Gamma_{2}\cup\Gamma_{2^2}\cup\Gamma_{2^4} & \text{no} & 7 \\
 & \psi^2\not\equiv1;\psi(-1)=1 &  &  &  &  \\
\hline
\end{array}\label{table16}
\end{equation*}

\[
\begin{array}{llllll}
\multicolumn{6}{l}{\textbf{Table 7. }\text{Irreducible representations of }\mathrm{SL}_2(2,\mathbb{Z}/32\mathbb{Z})}\\
\hline \text{type of rep.} & & \text{dim} & \mathfrak{t}-\text{spectrum}& \text{m.f.} &|\text{Gal}|\\\hline 
D_5(\chi) &  & 48 & \Phi_{2^5} &\text{no} &16  \\
N_5(\chi) &  & 16 & \Gamma_{2^5} &\text{yes} &4 \\
R_5^0(r,t,\chi) & r=1,3;t=1,5 & 12 & \Gamma_{2^5}^r\cup\Gamma_{2^5}^{5r}\cup\left\{\begin{array}{ccc}\Gamma_{2^4}^r\cup\Gamma_{2^4}^{5r} &:&t=1 \\ \Gamma_{2^4}^{3r}\cup\Gamma_{2^4}^{7r} &:&t=5\end{array}\right. &\text{yes} & 4 \\
R_5^0(1,t,\chi) & t=3,7 & 24 & \Gamma_{2^5}\cup\left\{\begin{array}{ccc}\Gamma_{2^3} &:&t=3 \\ \Phi_{2^2} &:&t=7\end{array}\right. &\text{no} & 8 \\
R_5^1(r,1,\chi) & r=1,5 & 12 & \Gamma^1_{2^5}\cup\Gamma^3_{2^5}\cup\Gamma^r_{2^4}\cup\Gamma^{3r}_{2^4} & \text{yes} &4 \\
R_5^1(r,3,\chi) & r=1,3 & 12 & \Gamma^1_{2^5}\cup\Gamma^7_{2^5}\cup\Gamma^{3r}_{2^4}\cup\Gamma^{5r}_{2^4} & \text{yes} &4 \\
R_5^1(r,5,\chi) & r=1,5 & 12 & \Gamma^1_{2^5}\cup\Gamma^3_{2^5}\cup\Gamma^{5r}_{2^4}\cup\Gamma^{7r}_{2^4} & \text{yes} &4 \\
R_5^1(r,7,\chi) & r=1,3 & 12 & \Gamma^1_{2^5}\cup\Gamma^7_{2^5}\cup\Gamma^r_{2^4}\cup\Gamma^{7r}_{2^4} & \text{yes} &4 \\
R_5^2(r,t,\chi)_\pm & r=1,3;t=1,3,5,7 & 6 & \Gamma_{2^4}\cup\Gamma_{2^5}^r & \text{yes} & 3 \\
R_5^2(r,1,\chi)_1 & \chi\not\in\mathfrak{B};r=1,3 & 12 & \Gamma_1\cup\Gamma_2\cup\Gamma_{2^3}^r\cup\Gamma_{2^3}^{5r}\cup\Gamma_{2^5}^r\cup\Gamma_{2^5}^{5r} & \text{yes} & 6 \\
C_3\otimes R_5^2(r,1,\chi)_1 & \chi\not\in\mathfrak{B};r=1,3 & 12 & \Gamma_{2^2}\cup\Gamma_{2^3}^{3r}\cup\Gamma_{2^3}^{7r}\cup\Gamma_{2^5}^r\cup\Gamma_{2^5}^{5r} \\
\hline
\end{array}
\]

\begin{landscape}

\[
\begin{array}{llllll}
\multicolumn{6}{l}{\textbf{Table 8. }\text{Irreducible representations of }\mathrm{SL}_2(2,\mathbb{Z}/2^\lambda\mathbb{Z})\text{ for }\lambda>5}\\
\hline \text{type of rep.} & & \text{dim} & \mathfrak{t}-\text{spectrum}& \text{m.f.} &|\text{Gal}| \\\hline 
D_\lambda(\chi) &  & 3\cdot2^{\lambda-1} & \Phi_{2^\lambda}
 &\text{no} &4(\lambda-1) \\
N_\lambda(\chi) &  & 2^{\lambda-1} & \Gamma_{2^\lambda}
 &\text{yes} &4 \\
R_\lambda^0(1,3,\chi) & & 3\cdot2^{\lambda-2} & \Gamma_{2^\lambda}\cup\Gamma_{2^{\lambda-2}}&\text{no} & 8 \\
R_\lambda^0(1,7,\chi) & & 3\cdot2^{\lambda-2} & \Gamma_{2^\lambda}\cup\Phi_{2^{\lambda-3}} &\text{no} &4(\lambda-3) \\
R_\lambda^0(r,t,\chi) & r=1,3;t=1,5 & 3\cdot2^{\lambda-3} & \Gamma^r_{2^\lambda}\cup\Gamma^{5r}_{2^\lambda}\cup\left\{\begin{array}{ccc}\Gamma^r_{2^{\lambda-1}}\cup\Gamma^{5r}_{2^{\lambda-1}} &:&t=1 \\ \Gamma^{3r}_{2^{\lambda-1}}\cup\Gamma^{7r}_{2^{\lambda-1}} &:&t=5\end{array}\right. & \text{yes} & 4 \\
R_\lambda^1(r,t,\chi) & r=1,3;t=3,7 & 3\cdot2^{\lambda-3} & \Gamma^r_{2^\lambda}\cup\Gamma^{5r}_{2^\lambda}\cup\Gamma^{rt}_{2^{\lambda-1}}\cup\Gamma^{7rt}_{2^{\lambda-1}}  &\text{yes} &4 \\
R_\lambda^1(r,t,\chi) & r,t\in\{1,5\} & 3\cdot2^{\lambda-3} & \Gamma^r_{2^\lambda}\cup\Gamma^{5r}_{2^\lambda}\cup\Gamma^{rt}_{2^{\lambda-1}}\cup\Gamma^{3rt}_{2^{\lambda-1}}  &\text{yes} &4 \\
R_\lambda^2(r,1,\chi) & r=1,3 & 3\cdot2^{\lambda-3} & \Gamma^r_{2^\lambda}\cup\Gamma^{5r}_{2^\lambda}\cup\Gamma^{rt}_{2^{\lambda-2}}\cup\Gamma^{5rt}_{2^{\lambda-2}}\cup\Gamma^r_{2^{\lambda-3}}\cup\Gamma^{5r}_{2^{\lambda-3}} &\text{no} & 6 \\
R_\lambda^2(r,3,\chi) & r=1,3 & 3\cdot2^{\lambda-3} & \Gamma^r_{2^\lambda}\cup\Gamma^{5r}_{2^\lambda}\cup\Gamma^{rt}_{2^{\lambda-2}}\cup\Gamma^{5rt}_{2^{\lambda-2}}\cup\Gamma_{2^{\lambda-4}} &\text{no} & 8 \\
R_\lambda^2(r,5,\chi) & r=1,3 & 3\cdot2^{\lambda-3} & \Gamma^r_{2^\lambda}\cup\Gamma^{5r}_{2^\lambda}\cup\Gamma^{rt}_{2^{\lambda-2}}\cup\Gamma^{5rt}_{2^{\lambda-2}}\cup\Gamma^{3r}_{2^{\lambda-3}}\cup\Gamma^{7r}_{2^{\lambda-3}} &\text{no} & 6 \\
R_\lambda^2(r,7,\chi) & r=1,3 & 3\cdot2^{\lambda-3} & \Gamma^r_{2^\lambda}\cup\Gamma^{5r}_{2^\lambda}\cup\Gamma^{rt}_{2^{\lambda-2}}\cup\Gamma^{5rt}_{2^{\lambda-2}}\cup\Phi_{2^{\lambda-5}} & \text{no} & \left\{\begin{array}{ccc} 6 & : & \lambda=6 \\ 4(\lambda-6) & : & \text{else}\end{array}\right. \\
R_\lambda^\sigma(r,t,\chi) & \sigma=3,\ldots,\lambda-3 & 3\cdot2^{\lambda-4} & \{\zeta_{p^\lambda}^{r(x^2+2^\sigma y^2)}:x,y\in\mathbb{Z},x\text{ or }y\text{ is odd}\} & \text{no} & \geq\sigma+1 \\
& r,t\in\{1,3,5,7\} & &  & & \\
R_\lambda^{\lambda-2}(r,t,\chi) & r=1,3,5,7;t=1,3 & 3\cdot2^{\lambda-4} &\Gamma^r_{2^\lambda}\cup\Gamma^r_{2^{\lambda-2}}\cup\Gamma^r_{2^{\lambda-4}}\cup\Phi^r_{2^{\lambda-6}} & \text{no} & \lambda-2 \\\hline
&\text{For }\lambda\geq7\ldots&&&&\\
R_\lambda^{\lambda-3}(r,t,\chi_{\pm1})_1 & r=1,3,5,7;t=1,3 & 3\cdot2^{\lambda-4} & \Gamma^r_{2^\lambda}\cup\Gamma^r_{2^{\lambda-2}}\cup\Gamma^r_{2^{\lambda-4}}\cup\Phi^r_{2^{\lambda-6}} & \text{no} & \lambda-2 \\\hline
&\text{For }\lambda=6\ldots&&&&\\
R_6^4(r,1,\chi_1)_1 & r=1,3,5,7;t=1,3 & 12& \Gamma_1\cup\Gamma_{2^2}^r\cup\Gamma_{2^4}^{5r}\cup\Gamma_{2^6}^r & \text{yes} & 4 \\
C_2\otimes R_6^4(r,t,\chi_1)_1 & r=1,3,5,7;t=1,3 & 12 & \Gamma_2\cup\Gamma_{2^2}^{3r}\cup\Gamma_{2^4}^{5r}\cup\Gamma_{2^6}^r & \text{yes} & 4 \\
\hline
\end{array}
\]
\end{landscape}

\end{appendices}


\bibliographystyle{plain}
\bibliography{bib}

\begin{thebibliography}{10}

\bibitem{MR3632091}
Paul Bruillard, Siu-Hung Ng, Eric~C. Rowell, and Zhenghan Wang.
\newblock On classification of modular categories by rank.
\newblock {\em Int. Math. Res. Not. IMRN}, (24):7546--7588, 2016.

\bibitem{paul}
Paul Bruillard, Siu-Hung Ng, Eric~C. Rowell, and Zhenghan Wang.
\newblock Rank-finiteness for modular categories.
\newblock {\em Journal of the American Mathematical Society}, 29:857--881,
  2016.

\bibitem{gannoncoste}
Antoine Coste and Terry Gannon.
\newblock Remarks on {G}alois symmetry in rational conformal field theories.
\newblock {\em Physics Letters B}, 323(3):316--321, 1994.

\bibitem{DMNO}
Alexei Davydov, Michael M{\"u}ger, Dmitri Nikshych, and Victor Ostrik.
\newblock The {W}itt group of nondegenerate braided fusion categories.
\newblock {\em Journal f{\"u}r die reine und angewandte Mathematik (Crelles
  Journal)}, (677):135--177, 2013.

\bibitem{dong2015congruence}
Chongying Dong, Xingjun Lin, and Siu-Hung Ng.
\newblock Congruence property in conformal field theory.
\newblock {\em Algebra \& Number Theory}, 9(9):2121--2166, 2015.

\bibitem{drinfeld2007grouptheoretical}
Vladimir Drinfeld, Shlomo Gelaki, Dmitri Nikshych, and Victor Ostrik.
\newblock Group-theoretical properties of nilpotent modular categories, 2007.

\bibitem{DGNO}
Vladimir Drinfeld, Shlomo Gelaki, Dmitri Nikshych, and Victor Ostrik.
\newblock On braided fusion categories {I}.
\newblock {\em Selecta Mathematica}, 16(1):1--119, 2010.

\bibitem{MR3808052}
Cain Edie-Michell.
\newblock The {B}rauer-{P}icard groups of fusion categories coming from the
  {$ADE$} subfactors.
\newblock {\em Internat. J. Math.}, 29(5):1850036, 43, 2018.

\bibitem{MR1354262}
Wolfgang Eholzer.
\newblock On the classification of modular fusion algebras.
\newblock {\em Comm. Math. Phys.}, 172(3):623--659, 1995.

\bibitem{tcat}
Pavel Etingof, Shlomo Gelaki, Dmitri Nikshych, and Victor Ostrik.
\newblock {\em Tensor Categories}.
\newblock Mathematical Surveys and Monographs. American Mathematical Society,
  2015.

\bibitem{ENO}
Pavel Etingof, Dmitri Nikshych, and Victor Ostrik.
\newblock On fusion categories.
\newblock {\em Annals of Mathematics}, 162(2):581--642, 2005.

\bibitem{MR2677836}
Pavel Etingof, Dmitri Nikshych, and Victor Ostrik.
\newblock Fusion categories and homotopy theory.
\newblock {\em Quantum Topology}, 1(3):209--273, 2010.
\newblock With an appendix by Ehud Meir.

\bibitem{2019arXiv191212260G}
Terry {Gannon} and Andrew {Schopieray}.
\newblock {Algebraic number fields generated by Frobenius-Perron dimensions in
  fusion rings}.
\newblock {\em arXiv e-prints}, page arXiv:1912.12260, December 2019.

\bibitem{nilgelaki}
Shlomo Gelaki and Dmitri Nikshych.
\newblock Nilpotent fusion categories.
\newblock {\em Advances in Mathematics}, 217(3):1053--1071, 2008.

\bibitem{KiO}
Alexander~Kirillov Jr. and Victor Ostrik.
\newblock On a {$q$}-analogue of the {McKay} correspondence and the {ADE}
  classification of {$\mathfrak{sl}_2$} conformal field theories.
\newblock {\em Advances in Mathematics}, 171(2):183--227, 2002.

\bibitem{MR1561347}
George~Abram Miller.
\newblock Determination of the number of subgroups of an abelian group.
\newblock {\em Bull. Amer. Math. Soc.}, 33(2):192--194, 1927.

\bibitem{mug1}
Michael M{\"u}ger.
\newblock On the structure of modular categories.
\newblock {\em Proceedings of the London Mathematical Society}, 87(2):291--308,
  2003.

\bibitem{MR2725181}
Siu-Hung Ng and Peter Schauenburg.
\newblock Congruence subgroups and generalized {F}robenius-{S}chur indicators.
\newblock {\em Communications in Mathematical Physics}, 300(1):1--46, 2010.

\bibitem{MR3997136}
Siu-Hung Ng, Andrew Schopieray, and Yilong Wang.
\newblock Higher {G}auss sums of modular categories.
\newblock {\em Selecta Math. (N.S.)}, 25(4):Paper No. 53, 32, 2019.

\bibitem{2020arXiv200701366N}
Siu-Hung {Ng}, Yilong {Wang}, and Qing {Zhang}.
\newblock {Modular categories with transitive Galois actions}.
\newblock {\em arXiv e-prints}, page arXiv:2007.01366, July 2020.

\bibitem{MR444787}
Alexandre Nobs.
\newblock Die irreduziblen {D}arstellungen der {G}ruppen {$SL\sb{2}(Z\sb{p})$},
  insbesondere {$SL\sb{2}(Z\sb{2})$}. {I}.
\newblock {\em Comment. Math. Helv.}, 51(4):465--489, 1976.

\bibitem{MR444788}
Alexandre Nobs and J\"{u}rgen Wolfart.
\newblock Die irreduziblen {D}arstellungen der {G}ruppen {$SL\sb{2}(Z\sb{p})$},
  insbesondere {$SL\sb{2}(Z\sb{p})$}. {II}.
\newblock {\em Comment. Math. Helv.}, 51(4):491--526, 1976.

\bibitem{ostrik}
Victor Ostrik.
\newblock Fusion categories of rank 2.
\newblock {\em Mathematical Research Letters}, 10:177--183, 2003.

\bibitem{MR2576705}
Victor Ostrik.
\newblock On formal codegrees of fusion categories.
\newblock {\em Math. Res. Lett.}, 16(5):895--901, 2009.

\bibitem{rowell}
Eric~C. Rowell.
\newblock From quantum groups to unitary modular tensor categories.
\newblock {\em Contemporary Mathematics}, 413:215--230, 2006.

\bibitem{Sawin06}
Stephen Sawin.
\newblock Closed subsets of the {W}eyl alcove and {TQFT}s.
\newblock {\em Pacific Journal of Mathematics}, 228(2):305--324, 2006.

\bibitem{MR4079742}
Andrew Schopieray.
\newblock Lie theory for fusion categories: {A} research primer.
\newblock In {\em Topological phases of matter and quantum computation}, volume
  747 of {\em Contemp. Math.}, pages 1--26. Amer. Math. Soc., Providence, RI,
  2020.

\bibitem{schopieray2020nonpseudounitary}
Andrew Schopieray.
\newblock Non-pseudounitary fusion.
\newblock {\em Journal of Pure and Applied Algebra}, 226(5):106927, 2022.

\bibitem{MR2884710}
L\'{a}szl\'{o} T\'{o}th.
\newblock Menon's identity and arithmetical sums representing functions of
  several variables.
\newblock {\em Rend. Semin. Mat. Univ. Politec. Torino}, 69(1):97--110, 2011.

\bibitem{MR2963406}
L\'{a}szl\'{o} T\'{o}th.
\newblock On the number of cyclic subgroups of a finite {A}belian group.
\newblock {\em Bull. Math. Soc. Sci. Math. Roumanie (N.S.)},
  55(103)(4):423--428, 2012.

\end{thebibliography}

\end{document}